\documentclass[a4paper]{article}

\usepackage[ruled]{algorithm2e}
\usepackage{amsmath}
\usepackage{amssymb}
\usepackage{amsthm}
\usepackage{array}
\usepackage{authblk}
\usepackage[english]{babel}
\usepackage{bm}
\usepackage{caption}
\usepackage{cite}
\usepackage{color}
\usepackage{dsfont}
\usepackage{float}
\usepackage[T1]{fontenc}
\usepackage{geometry}
\usepackage{graphicx}
\usepackage{hhline}
\usepackage{hyperref}
\usepackage[utf8x]{inputenc}
\usepackage{latexsym}
\usepackage{makecell}
\usepackage{mathrsfs}
\usepackage{mathtools}
\usepackage{multicol}
\usepackage{multirow}
\usepackage{nomencl}
\usepackage{subcaption}
\usepackage{systeme}
\usepackage{tabu}
\usepackage[colorinlistoftodos]{todonotes}
\usepackage{url}

\SetKwInOut{Input}{Input}
\SetKwInOut{Output}{Output}
\DontPrintSemicolon
\makenomenclature
\renewcommand\nompreamble{\begin{multicols}{2}}
\renewcommand\nompostamble{\end{multicols}}

\makeatother
\numberwithin{equation}{section}
\newtheorem{thm}{Theorem}
\newtheorem{defi}{Definition}
\newtheorem{coro}{Corollary}
\newtheorem{lem}{Lemma}
\newtheorem{prob}{Problem}
\newtheorem{prop}{Proposition}

\theoremstyle{definition}
\newtheorem{exa}{Example}

\def\T{\mathrm T}
\def\rd{\mathrm d}
\def\e{\mathrm e}
\def\diag{\mathrm{diag}}
\def\BA{\mathbb A}
\def\BG{\mathbb G}
\def\BN{\mathbb N}
\def\BQ{\mathbb Q}
\def\BR{\mathbb R}
\def\BS{\mathbb S}
\def\cA{\mathcal A}
\def\cC{\mathcal C}
\def\cN{\mathcal N}
\def\cS{\mathcal S}
\def\cX{\mathcal X}
\def\Ga{\Gamma}
\def\Si{\Sigma}
\def\Om{\Omega}
\def\al{\alpha}
\def\be{\beta}
\def\ga{\gamma}
\def\ve{\varepsilon}
\def\ze{\zeta}
\def\la{\lambda}
\def\vp{\varphi}
\def\f{\frac}
\def\nb{\nabla}
\def\pa{\partial}
\def\ov{\overline}
\def\wt{\widetilde}

\AtEndDocument{
\par
\medskip
\begin{tabular}{@{}l@{}}
\textsc{}\\
{\it } \texttt{}
\end{tabular}}

\title{\bf Inverse $t$-Source Problem and a Strict Positivity Property for Coupled Subdiffusion Systems}

\author{Mohamed BenSalah\thanks{Department of Computer Sciences, University of Sousse, Rue Tahar Ben Achour, Sousse 4003, Tunisia.\\
E-mail : mohamed.bensalah@fsm.rnu.tn}\qquad
Yikan Liu\thanks{Department of Mathematics, Kyoto University, Kitashirakawa-Oiwakecho, Sakyo-ku, Kyoto 606-8502, Japan.\\
E-mail : liu.yikan.8z@kyoto-u.ac.jp}}
\date{}

\begin{document}

\maketitle

\begin{abstract}
This article is concerned with the inverse problem on determining the temporal component of the source term in a coupled time-fractional diffusion system by a single point observation. Under a non-degeneracy condition on the known spatial component, we establish the Lipschitz stability by observing all solution components by a series representation of the mild solution. To reduce the observation data, we prove the strict positivity of some fractional integral of the solution to the homogeneous problem by a modified Picard iteration. This, together with a coupled Duhamel's principle, lead us to the uniqueness of the inverse problem by observing any single solution component under a specific structural constraint on the unknown. Numerically, we propose an iterative regularizing ensemble Kalman method (IREKM) for the simultaneous recovery of the temporal sources. Through extensive numerical tests, we demonstrate its accuracy, robustness against noise, and scalability with respect to the number of components. Our findings highlight the essential roles of the non-degeneracy condition, measurement configuration, and fractional structural constraints in ensuring reliable reconstructions. The proposed framework provides both rigorous theoretical guarantees and a practical algorithmic approach for multi-component source identification in fractional diffusion systems.\medskip

\noindent{\bf Keywords:} Coupled subdiffusion system, Inverse source problem, Strict positivity property,\\
Bayesian inverse problems, Iterative regularizing ensemble Kalman method\medskip

\noindent{\bf MSC 2010:} 35R30, 35R11, 35B50, 65M32, 65C60
\end{abstract}


\section{Introduction}

With the maturation of fundamental theories of time-fractional partial differential equations, coupled subdiffusion systems have gradually attracted the attention of applied mathematicians in recent years. Basic well-posedness for mild solutions to weakly-coupled subdiffusion systems was established in Li, Huang and Liu \cite{LHL23}, and Li, Liu and Wada \cite{LLW26} discovered new long-time decay patterns which never occur in single equations. Meanwhile, several related inverse problems were studied, e.g., in \cite{FLL25,FL25}. Nevertheless, in comparison with the abundant achievements for the single counterpart, coupled subdiffusion systems are still awaiting further investigations especially in the directions of qualitative properties and various types of inverse problems. For the single counterpart, inverse source identification problems have been extensively studied under various conditions. For instance, inverse source problems within abstract frameworks of positive operators, including hypoelliptic diffusion and subdiffusion equations, were established in \cite{RTT19}. More recently, these formulations have been successfully extended to complex evolutionary and non-linear environments by Ashurov and his co-workers; we mention the time-dependent identification problem for a fractional telegraph equation \cite{AS24} as well as the source identification problem for a nonlinear subdiffusion equation \cite{AM25}.

Within such a circumstance, this article is concerned with an inverse problem on determining the temporal component of the source term as well as a related strict positivity property for coupled subdiffusion systems. To begin with, we formulate the mathematical model under consideration. Let $T>0$ be constant and $\Om\subset\BR^d$ ($d\in\BN:=\{1,2,\dots\}$) be an open bounded domain whose boundary $\pa\Om$ is sufficiently smooth. Fix $K\in\BN$ and let $\al_1,\dots,\al_K$ be constants satisfying $1>\al_1\ge\cdots\ge\al_K>0$. In this article, we are concerned with the following initial-boundary value problem for a coupled system of subdiffusion equations
\begin{equation}\label{model}
\begin{cases}
\displaystyle\pa_t^{\al_k}u_k+\cA_k u_k+\sum_{\ell=1}^K c_{k\ell}(\bm x)u_\ell=\sum_{\ell=1}^K g_{k\ell}(\bm x)\rho_\ell(t) & \mbox{in }\Om\times(0,T),\\
u_k=0 & \mbox{on }\pa\Om\times(0,T),
\end{cases}
\end{equation}
$k=1,\dots,K$. Here $\pa_t^{\al_k}$ denotes the inverse of the $\al_k$-th order Riemann-Liouville integral operator
$$
J^{\al_k}:L^2(0,T)\longrightarrow L^2(0,T),\quad J^{\al_k}f(t):=\int_0^t\f{\tau^{\al_k-1}}{\Ga(\al_k)}f(t-\tau)\,\rd\tau,
$$
where $\Ga(\,\cdot\,)$ stands for the Gamma function. According to \cite{KRY20}, $\pa_t^{\al_k}$ generalizes the conventional Caputo and Riemann-Liouville derivatives to its domain $D(\pa_t^{\al_k})=H_{\al_k}(0,T)$, which collects functions in some fractional Sobolev space. Then by $\pa_t^{\al_k}u_k$ we mean $u_k(\bm x,\,\cdot\,)\in H_{\al_k}(0,T)$ for a.e. $\bm x\in\Om$, and the initial condition $u_k=0$ in $\Om\times\{0\}$ only makes pointwise sense for $\al_k>1/2$. Meanwhile, $\cA_k$ denotes a self-adjoint second order elliptic operator defined by
\[
\cA_k\psi:=-\mathrm{div}(\bm A_k(\bm x)\nb\psi)=-\sum_{i,j=1}^d\pa_{x_j}\left(a_{ij}^{(k)}(\bm x)\pa_{x_i}\psi\right)
\]
for $\psi\in D(\cA_k):=H^2(\Om)\cap H_0^1(\Om)$. Here $\bm A_k=(a_{ij}^{(k)})_{1\le i,j\le d}\in C^1(\ov\Om;\BR^{d\times d})$ ($k=1,\dots,K$) are symmetric matrix-valued functions on $\ov\Om$ and there exists a constant $\kappa>0$ such that
\[
\bm A_k(\bm x)\bm\xi\cdot\bm\xi\ge\kappa|\bm\xi|^2,\quad\forall\,\bm\xi\in\BR^d,\ \forall\,\bm x\in\ov\Om,\ \forall\,k=1,\dots,K,
\]
where $\cdot$ is the inner product in $\BR^d$ and $|\bm\xi|^2:=\bm\xi\cdot\bm\xi$. 

For later convenience, we adopt a vector representation
\[
\bm u=(u_1,\dots,u_K)^\T,\quad\bm\rho:=(\rho_1,\dots,\rho_K)^\T
\]
and introduce (formal) matrices
\begin{gather*}
\pa_t^{\bm\al}:=\diag(\pa_t^{\al_1},\dots,\pa_t^{\al_K}),\quad\BA:=\diag(\cA_1,\dots,\cA_K),\\
\bm C:=(c_{k\ell})_{1\le k,\ell\le K},\quad\bm G:=(g_{k\ell})_{1\le k,\ell\le K}
\end{gather*}
to rewrite \eqref{model} concisely as
\begin{equation}\label{eq-IBVP-u0}
\begin{cases}
(\pa_t^{\bm\al}+\BA+\bm C)\bm u=\bm G(\bm x)\bm\rho(t) & \mbox{in }\Om\times(0,T),\\
\bm u=\bm0 & \mbox{on }\pa\Om\times(0,T).
\end{cases}
\end{equation}
Then $\BA\bm u$ and $\bm C\bm u$ are the principal and zeroth order terms in the spatial direction respectively, where $\bm C$ is the $\bm x$-dependent coefficient matrix coupling components in $\bm u$. On the right-hand side, $\bm G\bm\rho$ is the source term taking the form of separated variables, in which $\bm G$ and $\bm\rho$ are spatial and temporal components, respectively. Assumptions on the regularities of $\bm C,\bm G$ and $\bm\rho$ depend on different problem settings and thus will be specified later in Section \ref{sec-main}.

The first main objective of this article is the following inverse source problem on determining the temporal component $\bm\rho$ of the source term in \eqref{eq-IBVP-u0} by the single point observation data.

\begin{prob}\label{prob-ISP}
Fix $\bm x_0\in\Om$ and let $\bm u$ be the solution to \eqref{eq-IBVP-u0}. Provided that the spatial component $\bm G$ in the source term is suitably given, determine the temporal component $\bm\rho$ by the single point observation of $\bm u$ at $\{\bm x_0\}\times(0,T)$.
\end{prob}

The above problem generalizes its prototype for single time-fractional equations, namely, determine $\rho(t)$ in
\[
(\pa_t^\al+\cA)u=g(\bm x)\rho(t)
\]
by observing $u$ at a monitoring point $\bm x_0\in\Om$. The study of this problem can be traced back to Sakamoto and Yamamoto \cite{SY11}, which showed Lipschitz stability of $\rho$ with respect to $\pa_t^\al u(\bm x_0,\,\cdot\,)$ when $g(\bm x_0)\ne0$. Such a restriction on $\bm x_0$ was removed in Liu, Rundell and Yamamoto \cite{LRY16}, where the uniqueness for arbitrary $\bm x_0\in\Om$ was proved by the strict positivity of solutions to homogeneous problems. From then on, there have been explosive growth in corresponding numerical reconstruction methods and we refer to \cite{LLY19} for a topical survey until the year of 2019. As recent developments in this direction, we mention \cite{HLY20} on the local
stability in determining the orbit $\bm\rho(t)$ of a moving source, \cite{LY23} on 
the uniqueness for singular $\rho$ in negative order Sobolev spaces, \cite{AS24} on an abstract time-dependent source identification framework for fractional evolution systems, \cite{AS22} on time-dependent source problems for fractional Schr\"odinger models, \cite{AM25} on source identification formulations for nonlinear subdiffusion equations, and 
\cite{HL23} on the same problem for a time-fractional wave equation with an order 
$\al\in(1,2)$.

Regardless of the fruitful progress achieved for single equations, Problem \ref{prob-ISP} has not yet been studied for coupled subdiffusion systems to the best of our knowledge. As other types of inverse problems related to \eqref{eq-IBVP-u0}, we refer to \cite{RHY21} for the inverse coefficient problem, \cite{LHL23,L25} for the uniqueness and numerical reconstruction of orders, \cite{FLL25} for the backward problem, and \cite{FL25} for the inverse $\bm x$-source problem. However, the inverse $t$-source problem for \eqref{eq-IBVP-u0} remains open in the literature, regardless of its practical importance in environmental pollution problems. This motivates us to deal with Problem \ref{prob-ISP} from both theoretical and numerical aspects.

As was seen in the single equation case, the difficulty of Problem \ref{prob-ISP} depends heavily on the choice of $\bm x_0$, in which some strict positivity property for homogeneous problems plays an essential role. Therefore, as a highly related problem, this article also keeps an eye on the following problem concerning the positivity property for a homogeneous counterpart of \eqref{eq-IBVP-u0}.

\begin{prob}\label{prob-SPP}
Let $\bm v$ satisfy the initial-boundary value problem
\begin{equation}\label{eq-IBVP-v0}
\begin{cases}
\pa_t^{\bm\al}(\bm v-\bm g)+(\BA+\bm C)\bm v=\bm0 & \mbox{in }\Om\times(0,T),\\
\bm v=\bm0 & \mbox{on }\pa\Om\times(0,T),
\end{cases}
\end{equation}
where all components in the initial value $\bm g=(g_1,\dots,g_K)^\T$ are non-negative. If some components of $\bm g$ do not vanish identically, can we conclude certain strict positivity of $\bm v$ under some suitable assumptions?
\end{prob}

As one of the most typical features of parabolic and subdiffusion equations, the (strong) maximum principle and related properties have been studied intensively in literature. Focusing on the subdiffusion ones, Luchko \cite{L09} first established the weak maximum principle for single equations by means of an extremum principle for the Caputo derivative. This result was later improved to a strong one in \cite{LRY16}, asserting that the solution to a homogeneous equation is strictly positive a.e. in $\Om\times(0,\infty)$ if the initial value is non-negative and non-vanishing. We refer to Luchko and Yamamoto \cite{LY19} as a comprehensive review on maximum principle for time-fractional diffusion equations. For coupled subdiffusion systems, a non-negativity result was first mentioned in \cite{LY17} and then rigorously proved recently in \cite{LY25}, namely, non-negative initial value and source term yield a non-negative solution. However, there seems no stronger result as that in \cite{LRY16} available in existing literature, which inspires the proposal of Problem \ref{prob-SPP}. Remarkably, in view of the coupling effect in \eqref{eq-IBVP-v0}, the strict positivity is expected to propagate among the components of the solution. Therefore, in Problem \ref{prob-SPP} we also seek the possibility of achieving the strict positivity of $\bm v$ even if the initial values of some components vanish identically.

In this article, we prove two main results concerning Problem \ref{prob-ISP}, among which the latter relies on an affirmative answer to Problem \ref{prob-SPP}. First, in Theorem \ref{thm-Lip} we show the Lipschitz stability of $\bm\rho$ with respect to $\pa_t^{\bm\al}\bm u(\bm x_0,\,\cdot\,)$ following the methodology of \cite{SY11,HLY20} under the key assumption $\det\bm G(\bm x_0)\ne0$, which naturally generalizes the similar non-degeneracy condition for single equations. The proof requires a solution estimate involving a weakly singular integral, which is achieved via a new series representation for the mild solution (see Proposition \ref{prop-mildsol}). Second, constructing the mild solution by a modified Picard iteration, we establish an intermediate strict positivity property for \eqref{eq-IBVP-v0} in Theorem \ref{thm-SPP}, stating that some Riemann-Liouville integral of the solution is strictly positive almost everywhere. Such a drawback comes from the absence of a strict positivity property of inhomogeneous problems for single equations (see Corollary \ref{coro-SPP0}). However, it suffices to assume that only a part of initial values are non-vanishing and the coupling matrix $\bm C$ can spread the strict positivity throughout all components of the solution. Further, this incomplete strict positivity is sufficient to show Theorem \ref{thm-ISP}, that is, the uniqueness of Problem \ref{prob-ISP} by observing an arbitrary single component $u_k$ of the solution at an arbitrary point $\bm x_0$. The proof is based on a new Duhamel's principle for coupled subdiffusion systems and a structural condition on $\bm\rho$, so that the unknowns in principle reduce to a scalar-valued function in $t$.

After establishing the theoretical properties of the inverse problem, we adopt a Bayesian framework \cite{DS17,I13,S10} for the reconstruction of the unknown temporal component $\bm\rho$ from single-point observations. The Bayesian methodology provides a natural and flexible regularization mechanism \cite{S10}, as prior information and measurement noise are incorporated within a unified probabilistic formulation. In contrast to deterministic optimization-based approaches \cite{TW10,V02,EHN96}, which typically yield a single best-fit solution, the Bayesian approach characterizes the solution through a full posterior distribution. This enables not only accurate reconstruction but also a quantitative assessment of reliability and uncertainty. Such a feature is particularly important for ill-posed inverse source problems arising in time-fractional coupled systems, where stability is delicate and measurement noise may significantly influence the solution. Moreover, Bayesian inversion ensures probabilistic well-posedness, meaning that the posterior distribution concentrates around the true solution as the noise level decreases. For the numerical implementation, we employ the ad-hoc iterative regularizing ensemble Kalman method developed in \cite{CIRS18,I15,I16}, which provides an efficient and derivative-free framework for approximating the posterior distribution. A further advantage of this approach is that it avoids the derivation of adjoint equations and the computation of cost functional derivatives \cite{ILS13}, thereby significantly simplifying the implementation for strongly coupled fractional models.

Our aim is therefore to extend Bayesian inference to the inverse source problem governed by the coupled subdiffusion system \eqref{eq-IBVP-u0}, and to establish a coherent analytical and computational framework for recovering the temporal component $\bm\rho$ from limited observation data. In order to guarantee the validity of the posterior measure, we examine the well-definedness and well-posedness of the Bayesian inverse problem, which requires analyzing the continuity properties of the forward mapping associated with \eqref{eq-IBVP-u0}. The main analytical difficulty arises from the presence of multiple fractional operators $\partial_t^{\al_k}$ together with the coupling structure, necessitating delicate regularity arguments in appropriate fractional Sobolev spaces. From a computational perspective, we further explore the application of the iterative regularizing ensemble Kalman method \cite{CIRS18,I15,I16} as a robust and efficient strategy for solving the Bayesian inverse problem in this fractional coupled setting. Owing to its flexibility, the Bayesian framework can also be adapted to complex geometries \cite{ILS16}, heterogeneous media \cite{B25,XYW20,YZW21}, and nonlinear forward models \cite{ZJY18}. To the best of our knowledge, this is the first work combining Bayesian inversion and ensemble Kalman techniques for an inverse source problem governed by a coupled subdiffusion system. The proposed framework not only achieves accurate reconstructions of $\bm\rho$, but also provides reliable uncertainty quantification, which is essential in practical applications where data are sparse and contaminated by noise.

The remaining part of this article is structured as follows. In Section \ref{sec-main}, we fix notations and collect all main conclusions regarding Problems \ref{prob-ISP}--\ref{prob-SPP}, which are proved in Section \ref{sec-proof}. Then Section \ref{sec-Bayes} is devoted to the formulation of a Bayesian framework and the proposal of an iterative regularizing ensemble Kalman method for the numerical reconstruction of Problems \ref{prob-ISP}, followed by Section \ref{sec-numer} implementing various numerical experiments to illustrate the performance of the proposed algorithm. Finally, Section \ref{sec-conclude} closes this article by concluding remarks.


\section{Statements of the Main Results}\label{sec-main}

We start with preparing necessary preliminaries for explaining the main results. Throughout this article, by $(\,\cdot\,,\,\cdot\,)$ we denote the inner product on $L^2(\Om)$. For two vectors $\bm\xi=(\xi_1,\dots,\xi_K)^\T,\bm\ze=(\ze_1,\dots,\ze_K)^\T\in\BR^K$, we denote their Hadamard product as
\[
\bm\xi\odot\bm\zeta:=(\xi_1\ze_1,\dots,\xi_K\ze_K)^\T\in\BR^K.
\]
The convolution of $f,g:(0,+\infty)\longrightarrow\BR$ is defined as
\[
(f*g)(t):=\int_0^t f(t-\tau)g(\tau)\,\rd\tau,
\]
as long as the right-hand side makes sense in some function space. Then the Riemann-Liouville integral operator $J^\be$ with $\be>0$ can be represented as
\begin{equation}\label{eq-def-RL}
J^\be f=k_\be*f,\quad k_\be(t):=\f{t^{\be-1}}{\Ga(\be)}.
\end{equation}
For vector-valued functions $\bm f=(f_1,\dots,f_K)^\T,\bm g=(g_1,\dots,g_K)^\T:(0,T)\longrightarrow\BR^K$, their convolution is defined in a pointwise manner, i.e.,
\[
(\bm f*\bm g)(t)=((f_1*g_1)(t),\dots,(f_K*g_K)(t))^\T=\int_0^t\bm f(t-\tau)\odot\bm g(\tau)\,\rd\tau.
\]
At the same time, we recall the familiar Mittag-Leffler function
\[
E_{\be,\ga}(z):=\sum_{m=0}^\infty\f{z^m}{\Ga(\be m+\ga)},\quad\be>0,\ \ga\in\BR,\ z\in\mathbb C.
\]

Next, we invoke the useful eigensystem $\{(\la_n^{(k)},\vp_n^{(k)})\}_{n=1}^\infty$ of the elliptic operator $\cA_k$ ($k=1,\dots,K$), that is,
\[
\begin{cases}
\cA_k\vp_n^{(k)}=\la_n^{(k)}\vp_n^{(k)} & \mbox{in }\Om,\\
\vp_n^{(k)}=0 & \mbox{on }\pa\Om,
\end{cases}\quad n\in\BN,
\]
where $\{\la_n^{(k)}\}\subset\BR$ satisfies
\[
0<\la_1^{(k)}<\la_2^{(k)}\le\cdots,\quad\la_n^{(k)}\longrightarrow+\infty\ (n\to\infty)
\]
and $\{\vp_n^{(k)}\}\subset D(\cA_k)$ forms a complete orthonormal basis of $L^2(\Om)$. Then following the standard theory of fractional operators, one can define the fractional power $\cA_k^\be$, its domain $D(\cA_k^\be)$ as well as the corresponding norm $\|\cdot\|_{D(\cA_k^\be)}$ for all $\be\ge0$ in the same manner as \cite{LHL23}. In particular, we have
\[
\cA_k^\be\psi=\sum_{n=1}^\infty(\la_n^{(k)})^\be(\psi,\vp_n^{(k)})\vp_n^{(k)},\quad\|\psi\|_{D(\cA_k^\be)}=\|\cA_k^\be\psi\|_{L^2(\Om)}
\]
and the norm equivalence $\|\cdot\|_{D(\cA_k^\be)}=\|\cdot\|_{H^{2\be}(\Om)}$. We further denote $\BA^\be:=\diag(\cA_1^\be,\dots,\cA_K^\be)$ and define $D(\BA^\be)$ along with its norm $\|\cdot\|_{D(\BA^\be)}$ accordingly.

In the sequel, we simply write e.g.\! $\bm u(t)=\bm u(\,\cdot\,,t)$. For a matrix-valued function $\bm\Psi=(\psi_{ij})$, if each entry $\psi_{ij}$ belongs to a Banach space $X$, then we denote $\bm\Psi\in X$ for simplicity. For vectors or vector-valued functions taking values in Euclidean spaces, by default we employ the $\ell^2$ norm, but time by time we shall switch to any $\ell^p$ norm ($p\in[0,\infty]$) for convenience, as all norms in finite dimensional normed spaces are equivalent.

Now we can state the first result on the Lipschitz stability of the inverse source problem under a non-degeneracy condition.

\begin{thm}\label{thm-Lip}
Let $\be>d/4$ be constant and assume
\[
\bm C\in C^\infty(\ov\Om),\quad\bm G\in D(\BA^\be),\quad\bm\rho\in L^\infty(0,T)
\]
in \eqref{eq-IBVP-u0}. If $\det\bm G(\bm x_0)\ne0,$ then there exists a constant $C>0$ such that
\begin{equation}\label{eq-Lip}
\|\bm\rho\|_{L^\infty(0,T)}\le C\|\pa_t^{\bm\al}\bm u(\bm x_0,\,\cdot\,)\|_{L^\infty(0,T)}.
\end{equation}
In particular, the condition $\bm u=\bm0$ at $\{\bm x_0\}\times(0,T)$ implies $\bm\rho=\bm0$ in $(0,T)$.
\end{thm}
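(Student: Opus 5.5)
We follow the approach of \cite{SY11,HLY20}. The plan is to differentiate the equation in time so that $\bm\rho$ appears as the initial datum of a homogeneous problem, and then to read off a Volterra integral equation for $\bm\rho$ at $\bm x_0$.

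\textbf{Reduction.} Formally, set $\bm w:=\pa_t^{\bm\al}\bm u$. By a Duhamel-type argument, $\bm u$ is the time convolution of $\bm\rho$ with the solution operator of the homogeneous system. More precisely, let $\bm y(t)=\bm y(\bm x,t;\ell)$ solve
\[
\pa_t^{\bm\al}\bm y+(\BA+\bm C)\bm y=\bm0,
\]
with initial datum the $\ell$-th column $\bm G_\ell$ of $\bm G$. Then one expects
\[
\pa_t^{\bm\al}\bm u(t)=\bm G\bm\rho(t)-(\BA+\bm C)\bm u(t),
\]
and evaluating at $\bm x_0$ gives
\[
\bm G(\bm x_0)\bm\rho(t)=\pa_t^{\bm\al}\bm u(\bm x_0,t)+\big((\BA+\bm C)\bm u\big)(\bm x_0,t).
\]
Rather than relying on this formal computation, we use the series representation of the mild solution announced in Proposition \ref{prop-mildsol}. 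We write
\[
(\BA+\bm C)\bm u(t)=\int_0^t \bm K(t-s)\bm\rho(s)\,\rd s,
\]
where $\bm K(t)$ is a matrix of functions with values in $C(\ov\Om)$. Inverting $\bm G(\bm x_0)$ then yields the Volterra equation of the second kind
\[
\bm\rho(t)=\bm G(\bm x_0)^{-1}\Big(\pa_t^{\bm\al}\bm u(\bm x_0,t)-\int_0^t \bm K(\bm x_0,t-s)\bm\rho(s)\,\rd s\Big).
\]

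\textbf{Key estimate (main obstacle).} We need a pointwise bound on the kernel of the form
\[
|\bm K(\bm x_0,t)|\le C t^{\ga-1}\quad\text{for some } \ga>0.
\]
This is where $\be>d/4$ enters. By Sobolev embedding, $D(\BA^{\be})\subset H^{2\be}(\Om)\subset C(\ov\Om)$, so it suffices to estimate $\|\BA^{\be}(\BA+\bm C)\bm y(t)\|_{L^2}$ with $\bm y(0)=\bm G_\ell\in D(\BA^\be)$.

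For a single decoupled equation this is the classical bound
\[
\|\cA^{1}E_{\al,1}(-\cA t^\al)\psi\|_{D(\cA^\be)}\le Ct^{-\al}\|\psi\|_{D(\cA^\be)},
\]
which is not integrable when $\al$ is close to $1$. We therefore shift regularity: we bound $\cA^{\be}\cdot\cA\,E_{\al,1}$ applied to $\psi\in D(\cA^{\be})$ using only $\cA^{1-\theta}$ smoothing together with a margin $\be-\theta>d/4$. Since $\be>d/4$ is strict, we can choose $\theta>0$ small and obtain
\[
\|\cdot\|_{C(\ov\Om)}\le Ct^{-\al(1-\theta)}\|\psi\|_{D(\cA^\be)},
\]
which is integrable.

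For the coupled system, the modified series of Proposition \ref{prop-mildsol} expresses the solution as the decoupled solution operator plus iterated convolutions with $\bm C$. Each further term of the series gains a factor $t^{\al_K}$ and a factorial-type decay, since $\bm C\in C^\infty$ preserves $D(\BA^\be)$. Summing the series therefore gives the required weakly singular bound $Ct^{\ga-1}$ with $\ga=\al_K\theta$ (or similar) on $(0,T)$.

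\textbf{Conclusion.} We take the $\ell^\infty$ norm and apply the generalized Gronwall inequality for weakly singular kernels (Henry's lemma) to the Volterra equation. This gives
\[
|\bm\rho(t)|\le C\|\pa_t^{\bm\al}\bm u(\bm x_0,\cdot)\|_{L^\infty(0,T)}+C\int_0^t (t-s)^{\ga-1}|\bm\rho(s)|\,\rd s,
\]
and hence \eqref{eq-Lip}. The uniqueness statement follows immediately: if $\bm u(\bm x_0,\cdot)=\bm0$, then $\pa_t^{\bm\al}\bm u(\bm x_0,\cdot)=\bm0$, and \eqref{eq-Lip} forces $\bm\rho=\bm0$.

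We expect the main difficulty to be making the identity for $\pa_t^{\bm\al}\bm u(\bm x_0,\cdot)$ rigorous at the point $\bm x_0$ for a mild solution. This is exactly what the kernel bound above provides.
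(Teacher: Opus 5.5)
Your architecture matches the paper's. You evaluate the equation at $\bm x_0$, invert $\bm G(\bm x_0)$, bound $(\BA+\bm C)\bm u(\bm x_0,t)$ by a weakly singular integral of $|\bm\rho|$ using the series of Proposition~\ref{prop-mildsol} and the margin $\be>d/4$, and close with Henry's Gr\"onwall lemma.

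The gap is in the kernel you actually estimate. You assert that $\bm u$ is the convolution of $\bm\rho$ with the homogeneous solution operator. You then take the column $\bm K(t)\bm e_\ell$ to be $(\BA+\bm C)\bm y(t)$, where $\bm y$ has initial value $\bm G_\ell$; in the decoupled case this is $\cA E_{\al,1}(-\cA t^\al)\bm G_\ell$. That is the parabolic Duhamel principle, and it is false for $\al<1$. By \eqref{eq-def-w}, the source-to-solution kernel is $t^{\al-1}E_{\al,\al}(-\cA t^\al)$. So the kernel of $\cA u$ is $\cA\,t^{\al-1}E_{\al,\al}(-\cA t^\al)g=-\f{\rd}{\rd t}\big(E_{\al,1}(-\cA t^\al)g\big)$, which is $-\pa_t y$, not $\cA y$. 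The Duhamel principles that do use the homogeneous solution do not help here:
\begin{itemize}
\item Lemma~\ref{lem-Duhamel1} gives only $J^{1-\al}u=\int_0^t y(t-s)\rho(s)\,\rd s$.
\item Lemma~\ref{lem-Duhamel2} needs the datum $D_{0+}^{1-\bm\al}\bm F(s)$ and $W^{1,\infty}$ time regularity, which $\bm\rho\in L^\infty(0,T)$ does not have.
\end{itemize}
So your bound $Ct^{-\al(1-\theta)}$ for $\cA E_{\al,1}(-\cA t^\al)\psi$ in $C(\ov\Om)$ does not control $(\BA+\bm C)\bm u(\bm x_0,t)$. Also, $t^{-\al}$ is integrable on $(0,T)$ for every $\al<1$, so your stated reason for shifting regularity does not apply to that kernel.

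For the correct kernel the regularity margin is genuinely needed. In $D(\cA^\be)$ you only get
\[
\|\cA\,t^{\al-1}E_{\al,\al}(-\cA t^\al)\psi\|_{D(\cA^\be)}\le Ct^{-1}\|\psi\|_{D(\cA^\be)},
\]
which is not integrable. The smoothing estimate of Lemma~\ref{lem-resolvent} with $\ga=1-\ve$ instead gives $Ct^{\al\ve-1}\|\psi\|_{D(\cA^\be)}$ in $D(\cA^{\be-\ve})\subset C(\ov\Om)$, with $\ve=\f12(\be-d/4)$ playing the role of your $\theta$. Combining this with the series, where each $\BQ$ contributes a factor $LJ^{\al_K}$, gives exactly the paper's estimate $\|\bm u(t)\|_{D(\BA^{\be+1-\ve})}\le CJ^{\al_K\ve}|\bm\rho(t)|$. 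This yields your Volterra inequality with $\ga=\al_K\ve$, and that exponent is the one you wrote at the end. With this correction the rest of your argument goes through. The paper works with these norm bounds directly rather than with a pointwise kernel $\bm K(\bm x_0,t)$, which spares it from constructing and evaluating $\bm K$ at all.
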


The regularity assumption on matrices $\bm C$ and $\bm G$ in Theorem \ref{thm-Lip} guarantees that the governing equation in \eqref{eq-IBVP-u0} is satisfied in a pointwise sense for any $\bm x\in\Om$, so that the observation data $\pa_t^{\bm\al}\bm u(\bm x_0,\,\cdot\,)$ on the right-hand side of \eqref{eq-Lip} is well-defined in $L^\infty(0,T)$. The $C^\infty$ assumption for $\bm C$ can be weakened depending on the spatial dimension $d$, but we refrain from spending further effort on this technical detail.

The Lipschitz stability obtained above relies heavily on the non-degeneracy condition $\det\bm G(\bm x_0)\ne0$, which requires a rather special choice of the observation point $\bm x_0$. For instance, if the known matrix $\bm G$ takes a diagonal form $\bm G=\diag(g_1,\dots,g_K)$, then it is required that $g_k(\bm x_0)\ne0$ or almost equivalently $\bm x_0\in\mathrm{supp}\,g_k$ for all $k=1,\dots,K$. In practical environment problem especially with contaminants, this means the necessity of performing the observation in a polluted area which is possibly dangerous. Therefore, from both theoretical interests and practical needs, it is more preferable to relax the choice of $\bm x_0$ to an arbitrary point in $\Om$. Besides, the observation data in \eqref{eq-Lip} turns out to be the fractional derivatives of $\bm u(\bm x_0,\,\cdot\,)$ instead of $\bm u(\bm x_0,\,\cdot\,)$ itself, which is usually unaffordable in practices in view of the observation noise. Hence, another possible improvement of Theorem \ref{thm-Lip} is the reduction of data regularity as much as possible.

Regarding the same inverse problem for scalar subdiffusion equations, Liu, Rundell and Yamamoto \cite{LRY16} first achieved the above aims and established a general uniqueness result by means of a strong maximum principle and a Duhamel's principle for subdiffusion equations. In a similar manner, we shall first generalize these principles to coupled systems and then apply them to show the uniqueness of our inverse source problem. Moreover, in Theorem \ref{thm-Lip} we managed to identify all $K$ components in $\bm\rho$ by the single point observation of all $K$ components of $\bm u$. Due to the coupling effect among components of $\bm u$, the information of some components is expected to interact with others. Hence, as an issue only for coupled systems, we shall also seek the possibility of determining some components of $\bm\rho$ by observing only some components of $\bm u$.

In the sequel, we slightly simplify the problem formulation by restricting the matrix $\bm G$ in \eqref{eq-IBVP-u0} as a diagonal one $\bm G=\diag(g_1,\dots,g_K)$, and denote $\bm g:=(g_1,\dots,g_K)^\T$. We first attempt to answer Problem \ref{prob-SPP}, i.e., a certain positivity property for the homogeneous problem \eqref{eq-IBVP-v0}. As we will deal with the (strict) positivity issue for vector- and matrix-valued functions, we first fix the notations as follows.

\begin{defi}\label{def-pos}
Let $\bm\Psi=(\psi_{ij})\in\BR^{m\times n}$ and let $\bm O\in\BR^{m\times n}$ denote the zero matrix. We write $\bm\Psi\ge\bm O$  $($respectively $\bm\Psi>\bm O)$ if $\psi_{ij}\ge0$ $($respectively $\psi_{ij}>0)$ for all $i=1,\dots,m$ and $j=1,\dots,n$.
\end{defi}

For the coupled system \eqref{eq-IBVP-v0}, the situation is definitely more complicated than its scalar prototype, so that some extra assumptions on the coefficient matrix $\bm C$ and the initial value $\bm g$ should be imposed. Below we state a weak version of the strict positivity property for \eqref{eq-IBVP-v0}.

\begin{thm}\label{thm-SPP}
Let $\bm g\in L^2(\Om)$ satisfy $\bm g\ge\bm0$ in $\Om$ and $\bm C\in L^\infty(\Om)$ satisfy
\begin{equation}\label{eq-cond-C}
c_{k\ell}\le0\quad\mbox{in }\Om,\ \forall\,k,\ell=1,\dots,K,\ k\ne\ell.
\end{equation}
Define a vector $\bm r=(r_1,\dots,r_K)^\T\in\BR^K$ and a matrix $\bm Q=(q_{k\ell})\in\BR^{K\times K}$ as
\begin{equation}\label{eq-def-rq}
\begin{gathered}
r_k=\begin{cases}
0, & \mbox{if }g_k\equiv0\mbox{ in }\Om,\\
1, & \mbox{else},
\end{cases}\quad q_{kk}=1,\quad k=1,\dots,K,\\
q_{k\ell}=\begin{cases}
0, & \mbox{if }c_{k\ell}\equiv0\mbox{ in }\Om,\\
1, & \mbox{else},
\end{cases}\quad k,\ell=1,\dots,K,\ k\ne\ell.
\end{gathered}
\end{equation}
If there exists an integer $M=0,1,\dots$ such that
\begin{equation}\label{eq-cond-rq}
\bm Q^M\bm r>\bm0,
\end{equation}
then the solution $\bm v$ to \eqref{eq-IBVP-v0} satisfies $J^{M(1-\al_K)}\bm v>\bm0$ a.e. in $\Om\times(0,T)$.
\end{thm}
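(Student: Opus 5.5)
We construct the mild solution of \eqref{eq-IBVP-v0} by a modified Picard iteration that keeps the diagonal part of $\bm C$ inside the scalar equations and moves only the off-diagonal coupling to the right-hand side. We write $\bm C=\bm C_{\rm d}+\bm C_{\rm o}$, where $\bm C_{\rm d}=\diag(c_{11},\dots,c_{KK})$. To make the scalar operators $\cA_k+c_{kk}$ positive, we first shift by a large constant $\mu>0$: we set $\wt c_{kk}:=c_{kk}+\mu\ge0$ and move $-\mu v_k$ to the right-hand side, where it is a non-negative multiple of $v_k$. The iteration is then $\bm v^{(0)}=\bm0$ and
\[
\pa_t^{\al_k}(v_k^{(j+1)}-g_k)+(\cA_k+\wt c_{kk})v_k^{(j+1)}=\mu v_k^{(j)}-\sum_{\ell\ne k}c_{k\ell}v_\ell^{(j)},\qquad k=1,\dots,K.
\]
By \eqref{eq-cond-C}, the right-hand side is a non-negative combination of the previous iterate. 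We then invoke the scalar theory from \cite{LRY16,LY25}: non-negative initial data together with a non-negative source give a non-negative solution. Induction yields $\bm v^{(j)}\ge\bm0$, and the sequence is monotone increasing. Convergence to the mild solution follows from a weakly singular Gronwall-type (contraction) estimate for the series representation of Proposition \ref{prop-mildsol}. We conclude that $\bm v\ge\bm0$ and that each $v_k$ dominates the solution $w_k$ of the scalar problem with initial value $g_k$ and source $-\sum_{\ell\ne k}c_{k\ell}v_\ell\ge0$.

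Next we propagate strict positivity along the graph encoded by $\bm Q$. Let $\bm r^{(m)}$ be the indicator vector of $\bm Q^m\bm r$, so that $r^{(m)}_k=1$ exactly when $k$ is reachable from the support of $\bm r$ within $m$ steps. We claim by induction on $m$ that $J^{m(1-\al_K)}v_k>0$ a.e. in $\Om\times(0,T)$ whenever $r^{(m)}_k=1$. Note that $J^\ga$ preserves strict positivity, because its kernel is positive. Hence once the claim holds at level $m$ it also holds at every higher level, and the claim at $m=M$ gives the theorem via \eqref{eq-cond-rq}.

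For $m=0$ the claim is exactly the strong maximum principle of \cite{LRY16} applied to the scalar homogeneous problem with initial value $g_k\ge0$, $g_k\not\equiv0$. Since $v_k$ dominates this solution, we get $v_k>0$.

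For the inductive step, suppose $r_k^{(m+1)}=1$ but $r_k^{(m)}=0$. Then there is some $\ell\ne k$ with $c_{k\ell}\not\equiv0$ and $J^{m(1-\al_K)}v_\ell>0$, and $v_k$ dominates the solution of the scalar problem with zero initial value and source $f=-c_{k\ell}v_\ell\ge0$. This step is the main obstacle. Corollary \ref{coro-SPP0} indicates that strict positivity is not available directly for inhomogeneous problems, which is why the order of the integral grows by $1-\al_K$ at each step. We plan to use Duhamel's representation
\[
w_k=\int_0^t \pa_t^{1-\al_k}\bigl[\text{homogeneous solution with initial value }f(\cdot,s)\bigr](t-s)\,\rd s.
\]
Equivalently, $J^{1-\al_k}w_k$ is a time-convolution of $f$ with the homogeneous semigroup $S_k(t)$, which is strictly positivity-improving by \cite{LRY16}. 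Applying $J^{m(1-\al_K)}$ and commuting the convolutions, we find that $J^{m(1-\al_K)+1-\al_k}w_k$ equals $S_k*(J^{m(1-\al_K)}f)$. The factor $J^{m(1-\al_K)}f=-c_{k\ell}J^{m(1-\al_K)}v_\ell$ is non-negative and not identically zero on any time interval, so the result is strictly positive a.e. Finally, since $1-\al_k\le1-\al_K$, applying the additional integral $J^{\al_k-\al_K}$ (of order $\ge0$) preserves strict positivity, which gives $J^{(m+1)(1-\al_K)}v_k>0$.

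The delicate points are making the semigroup convolution argument rigorous at the level of mild solutions (measurability, and strictness on a positive-measure set in time) and tracking that $-c_{k\ell}\ge0$ is nonzero only on a positive-measure subset of $\Om$. The positivity-improving property of $S_k(t)$ for $t>0$ handles the latter.
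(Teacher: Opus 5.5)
Your proposal is correct and follows essentially the paper's route. You use a Picard iteration with a shifted diagonal and a non-negative coupling matrix, and the weak maximum principle gives monotonicity. Then you induct along the reachability sets of $\bm Q^m\bm r$, applying the Duhamel representation of Lemma~\ref{lem-Duhamel1} (i.e.\ Corollary~\ref{coro-SPP0}) to a Riemann--Liouville integral of the component, and finish with $J^{\al_k-\al_K}$.

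The only difference is minor. You run the induction on the limit $\bm v$, comparing each $v_k$ with a scalar problem with zero initial value and source $-c_{k\ell}v_\ell$. The paper runs it on the iterate $\bm v^{(M+1)}$, using $g_k\equiv0$ to commute $J^{(m-1)(1-\al_K)}$ with $\pa_t^{\al_k}$, and then concludes from $\bm v\ge\bm v^{(M+1)}$.
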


We regard the above result as a weak strict positivity property, because we only conclude that some Riemann-Liouville integral of the solution $\bm v$ is strictly positive a.e. in $\Om\times(0,T)$ instead of $\bm v$ itself. The key  technical difficulty turns out to be the lack of a strict positivity property for the inhomogeneous problem for a single subdiffusion equation
\begin{equation}\label{eq-single}
\begin{cases}
\pa_t^\al(u-g)+\cA u=F & \mbox{in }\Om\times(0,T),\\
u=0 & \mbox{on }\pa\Om\times(0,T).
\end{cases}
\end{equation}
In fact, for the homogeneous problem, i.e., $F\equiv0$, the strict positivity property is well known for \eqref{eq-single} (e.g.\! \cite{LRY16}). However, in the case of  $a\equiv0$, there seems no result asserting $u>0$ even if $F\ge0,\not\equiv0$. This prevents us from obtaining a real strict positivity property for \eqref{eq-IBVP-v0}. Alternatively, in Corollary \ref{coro-SPP0} we verify the strict positivity of $J^{1-\al}u$ under some non-negativity assumption on $F$, which results in the Riemann-Liouville integral in the conclusion of Theorem \ref{thm-SPP}. In this sense, we fail to exclude the possibility for some components of $\bm v$ to vanish in a subset of $\Om\times(0,T)$, although it seems unlike to happen. Nevertheless, such an intermediate result turns out to be sufficient for proving the uniqueness of the inverse problem, as stated in Theorem \ref{thm-ISP} below.

We explain the usages of assumptions \eqref{eq-cond-C}--\eqref{eq-cond-rq} as follows. First, the non-positivity assumption \eqref{eq-cond-C} on the off-diagonal entries reflects the cooperativeness among components of $\bm v$, which is easily understood if the coupling term $\bm C\bm v$ is moved to the right-hand side of \eqref{eq-IBVP-v0}. The diagonal entries of $\bm C$ will be treated separately and their signs do not matter. Next, the vector $\bm r$ and the matrix $\bm Q$ defined in \eqref{eq-def-rq} play the role of indicators for the identical vanishing of $\bm g$ and positive effect of $-\bm C$, respectively. Indeed, it is readily seen that $r_k=1$ if and only if $g_k\ge0,\not\equiv0$ in $\Om$, indicating that $g_k$ is strictly positive in a subdomain of $\Om$. Therefore, we assume a non-negative initial condition for \eqref{eq-IBVP-v0} while allows some components of $\bm v$ to take homogeneous initial values. Similarly, the off-diagonal entries of $\bm Q$ indicate whether $-c_{k\ell}\ge0,\not\equiv0$ in $\Om$ for $k\ne\ell$.

Since some components of $\bm v$ suffer from vanishing initial conditions, their time evolutions rely completely on the supply from other components. Thus, their strict positivity depends on whether they can receive strictly positive supply from those with non-vanishing initial values. Such a requirement is fulfilled by the key assumption \eqref{eq-cond-rq}, which implies that the information of strict positivity eventually spread throughout all components. Technically, we will construct a non-negative and monotone sequence by the Picard iteration to approximate the mild solution to \eqref{eq-IBVP-v0}, in which \eqref{eq-cond-rq} guarantees the strict positivity of some Riemann-Liouville integral of the sequence after finite steps of iterations.

As a special case of Theorem \ref{thm-SPP}, we immediately obtain the following consequence.

\begin{coro}\label{coro-SPP1}
Let $\bm g\in L^2(\Om)$ and $\bm C\in L^\infty(\Om)$ satisfy \eqref{eq-cond-C}. If $g_k\ge0,\not\equiv0$ in $\Om$ for all $k=1,\dots,K,$ then the solution $\bm v$ to \eqref{eq-IBVP-v0} satisfies $\bm v>\bm0$ a.e. in $\Om\times(0,T)$.
\end{coro}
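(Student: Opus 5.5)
This follows from Theorem \ref{thm-SPP} with $M=0$. By assumption $g_k\ge0$ and $g_k\not\equiv0$ in $\Om$ for every $k=1,\dots,K$, so definition \eqref{eq-def-rq} gives $r_k=1$ for all $k$, i.e., $\bm r=(1,\dots,1)^\T>\bm0$. Since $\bm Q^0$ is the $K\times K$ identity matrix, we get $\bm Q^0\bm r=\bm r>\bm0$. Hence condition \eqref{eq-cond-rq} holds with $M=0$, whatever the pattern of vanishing of the off-diagonal entries of $\bm C$. The remaining hypotheses of Theorem \ref{thm-SPP} are assumed verbatim in the corollary: $\bm g\in L^2(\Om)$ with $\bm g\ge\bm0$, $\bm C\in L^\infty(\Om)$, and the cooperativeness condition \eqref{eq-cond-C}.

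Applying Theorem \ref{thm-SPP} then gives $J^{M(1-\al_K)}\bm v>\bm0$ a.e. in $\Om\times(0,T)$ with $M(1-\al_K)=0$. The only point needing comment is the convention $J^0=\mathrm{Id}$, the identity operator. This is the standard convention, consistent with the semigroup property $J^\be J^\ga=J^{\be+\ga}$ and with the limit $k_\be*f\to f$ in $L^2(0,T)$ as $\be\downarrow0$. With this convention the conclusion reads exactly $\bm v>\bm0$ a.e. in $\Om\times(0,T)$.

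There is no real obstacle, since all the work lies in Theorem \ref{thm-SPP}. One should still check that the proof of Theorem \ref{thm-SPP} covers the degenerate case $M=0$ and does not silently assume $M\ge1$. In that case every component starts from a nonvanishing nonnegative initial value, so no positivity has to be transferred through the coupling. The positivity of each component should then follow from the zeroth step of the monotone Picard scheme: the homogeneous scalar problem with initial value $g_k$ has a strictly positive solution by the strong positivity of \cite{LRY16}, with the diagonal entry $c_{kk}$ handled as described in the proof. Monotonicity of the iteration then carries this bound to the limit $\bm v$. If the proof were written only for $M\ge1$, I would repeat this zeroth-step argument directly to obtain the corollary.
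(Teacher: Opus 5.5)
Your proposal is correct and matches the paper's argument: $\bm r>\bm0$ gives \eqref{eq-cond-rq} with $M=0$, and Theorem \ref{thm-SPP} then yields $\bm v>\bm0$ under the convention $J^0=\mathrm{Id}$. Your check of the degenerate case is also accurate: for $M=0$ the paper's proof reduces to its base step $m=0$ of \eqref{eq-claim}, namely $\bm v\ge\bm v^{(1)}>\bm0$ via the scalar strong maximum principle, so nothing is silently assumed about $M\ge1$.
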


Indeed, the above corollary assumes that all components of $\bm g$ are non-negative and non-vanishing, indicating $\bm r>\bm0$ in view of definition \eqref{eq-def-rq}. This means $M=0$ and Theorem \ref{thm-SPP} simply implies $\bm v>\bm0$ a.e. in $\Om\times(0,T)$.

Based on Theorem \ref{thm-SPP} and a newly established fractional Duhamel's principle (see Lemma \ref{lem-Duhamel2}) in Section \ref{sec-proof}, it is straightforward to demonstrate the following uniqueness result concerning Problem \ref{prob-ISP} under a special condition on $\bm\rho$. 

\begin{thm}\label{thm-ISP}
Let $\be>d/4-1$ be a constant and $\bm u$ be the solution to
\begin{equation}\label{eq-IBVP-u1}
\begin{cases}
(\pa_t^{\bm\al}+\BA+\bm C)\bm u=\bm g(\bm x)\odot\bm\rho(t) & \mbox{in }\Om\times(0,T),\\
\bm u=\bm0 & \mbox{on }\pa\Om\times(0,T),
\end{cases}
\end{equation}
where
\begin{equation}\label{eq-reg}
\bm C\in C^\infty(\ov\Om),\quad\bm g\in\begin{cases}
L^2(\Om), & d=1,2,3,\\
D(\BA^\be), & d\ge4.
\end{cases}
\end{equation}
Further assume that $\bm C,\bm g$ satisfy {\rm\eqref{eq-cond-C}--\eqref{eq-cond-rq}} and $\bm\rho$ satisfies
\begin{equation}\label{eq-cond-mu}
\exists\,\mu\in W^{1,\infty}(0,T),\ \mu(0)=0\quad\mbox{such that}\quad J^{\al_k}\rho_k=\mu,\ k=1,\dots,K.
\end{equation}
Then for arbitrary $k\in\{1,\dots,K\}$ and arbitrary $\bm x_0\in\Om,$ the condition $u_k=0$ at $\{\bm x_0\}\times(0,T)$ implies $\bm\rho=\bm0$ in $(0,T)$.
\end{thm}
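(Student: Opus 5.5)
The plan is to reduce the problem to a scalar Titchmarsh-type convolution identity, following \cite{LRY16}. The key tool is a coupled Duhamel's principle (the announced Lemma \ref{lem-Duhamel2}), which expresses $\bm u$ through the solution $\bm v$ of the homogeneous problem \eqref{eq-IBVP-v0} with initial value $\bm g$.

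\textbf{Step 1: Duhamel representation.} Condition \eqref{eq-cond-mu} says $\rho_k=\pa_t^{\al_k}\mu$ for every $k$, with the same scalar $\mu$. For a single equation, the source $g\,\pa_t^{\al}\mu$ produces $u=\mu'*v$, since $\pa_t^\al(\mu'*v)=\mu'*\pa_t^\al v+\dots$. We expect the coupled version to read
\[
\bm u(\,\cdot\,,t)=\int_0^t\mu'(t-s)\,\bm v(\,\cdot\,,s)\,\rd s,
\]
with the same scalar kernel in every component. This is exactly why the structural constraint \eqref{eq-cond-mu} is imposed: the different orders $\al_k$ are absorbed into $J^{\al_k}\rho_k=\mu$. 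We would verify it formally by applying $\pa_t^{\bm\al}+\BA+\bm C$ to $\mu'*\bm v$. Writing $\bm w=\mu*\bm v$, we have $\pa_t^{\al_k}(\mu*v_k)=\mu*\pa_t^{\al_k}v_k$ in the appropriate sense, and $\pa_t^{\al_k}(v_k-g_k)=-((\BA+\bm C)\bm v)_k$. This gives $(\pa_t^{\bm\al}+\BA+\bm C)\bm u=\bm g\,\pa_t^{\bm\al}\mu$. Uniqueness of the forward problem then identifies the two sides. The regularity in \eqref{eq-reg}, namely $\be>d/4-1$ together with $\bm C\in C^\infty$, ensures that $v_k(\bm x_0,\,\cdot\,)$ is well defined pointwise (e.g.\ $v_k(t)\in D(\cA_k^{\be+1-\ep})\subset C(\ov\Om)$ for $t>0$, with an integrable singularity in $t$). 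This is the technical regularity point we would check via the series representation of Proposition \ref{prop-mildsol}.

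\textbf{Step 2: Titchmarsh argument.} The observation gives $0=u_k(\bm x_0,t)=(\mu'*v_k(\bm x_0,\,\cdot\,))(t)$ on $(0,T)$. Applying $J^{M(1-\al_K)}$ and using commutativity of convolution yields
\[
\mu'*\bigl(J^{M(1-\al_K)}v_k\bigr)(\bm x_0,\,\cdot\,)=0\quad\mbox{in }(0,T).
\]
Theorem \ref{thm-SPP} gives $J^{M(1-\al_K)}\bm v>\bm0$ a.e. in $\Om\times(0,T)$, but we need positivity at the specific point $\bm x_0$. We would upgrade this via continuity in $\bm x$ of $J^{M(1-\al_K)}v_k(\,\cdot\,,t)$, or more robustly argue as in \cite{LRY16}. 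In that argument, $\mu'*w(\bm x,\,\cdot\,)=0$ for $\bm x=\bm x_0$ only, so we need $w(\bm x_0,t)>0$ on some interval $(0,\ep)$, or at least that $w(\bm x_0,\,\cdot\,)\not\equiv0$ near $0$. Given that, Titchmarsh's convolution theorem yields $\mu'=0$ on $(0,T-t_*)$, where $t_*$ is the infimum of the support of $w(\bm x_0,\,\cdot\,)$; with $t_*=0$ this gives $\mu'\equiv0$. Then $\mu(0)=0$ gives $\mu\equiv0$, hence $\rho_k=\pa_t^{\al_k}\mu=0$ for all $k$.

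\textbf{Main obstacle.} The hard part is obtaining strict positivity of $J^{M(1-\al_K)}v_k$ \emph{at the point} $\bm x_0$ for all small $t$, rather than only a.e. in $\Om\times(0,T)$. Our first attempt is to show that $J^{M(1-\al_K)}v_k$ is continuous on $\Om\times(0,T]$ by the regularity above and the interior smoothing from $\bm C\in C^\infty$. Then a.e.\ positivity gives non-negativity everywhere, and we need strictness at $\bm x_0$. To get it, we would revisit the Picard iteration of Theorem \ref{thm-SPP}: the lower bound there is built from scalar homogeneous solutions, which are positive everywhere in the interior via the eigenfunction/Mittag-Leffler representation and the strong maximum principle of \cite{LRY16}. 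Alternatively, Titchmarsh only requires $w(\bm x_0,\,\cdot\,)\not\equiv0$ on $(0,\ep)$ for every $\ep>0$. Non-negativity together with $u_k(\bm x_0,\,\cdot\,)$ vanishing may then suffice after a short-time asymptotic argument.
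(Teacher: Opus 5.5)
Your route is the paper's: the representation $u_k=\mu'*y_k$ with $\bm y$ (your $\bm v$) solving \eqref{eq-IBVP-v0}, then $J^{M(1-\al_K)}$, Theorem~\ref{thm-SPP}, Titchmarsh's theorem and $\mu(0)=0$. Your direct check of the representation, in place of Lemma~\ref{lem-Duhamel2}, works, but not as written: it mixes $\mu*\bm v$ with $\mu'*\bm v$, and $\pa_t^{\al_k}v_k$ with $\pa_t^{\al_k}(v_k-g_k)$. Written cleanly, it goes as follows. Using $\mu(0)=0$, split $\mu'*y_k=\mu'*(y_k-g_k)+\mu\,g_k$. Then $\pa_t^{\al_k}$ commutes with $\mu'*$ on $H_{\al_k}(0,T)$ because $J^{\al_k}$ does, and $\pa_t^{\al_k}(\mu\,g_k)=\rho_k g_k$. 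This is even preferable to the paper's route, because \eqref{eq-cond-mu} does not give the hypothesis $\bm F\in W^{1,\infty}(0,T;L^2(\Om))$ of Lemma~\ref{lem-Duhamel2}: for $\mu(t)=t$ one gets $\rho_k=t^{1-\al_k}/\Ga(2-\al_k)$.

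The genuine gap is the one you name, and your proposal leaves it open. Theorem~\ref{thm-SPP} gives $J^{M(1-\al_K)}\bm y>\bm0$ only a.e.\ in $\Om\times(0,T)$, which says nothing about the null set $\{\bm x_0\}\times(0,T)$. The paper's ``in particular'' at this step is not a valid inference either. Continuity in $\bm x$ only yields $\ge0$ at $\bm x_0$, as you note. A small-time expansion cannot help: if $\bm x_0\notin\bigcup_\ell\mathrm{supp}\,g_\ell$, every term of the formal expansion of $y_k(\bm x_0,t)$ in fractional powers of $t$ vanishes, because $\BA+\bm C$ is local.

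What closes the gap is your Picard idea made precise: re-run the induction \eqref{eq-claim} pointwise in $\bm x$ for the iterates $\bm y^{(m)}$ (the $\bm v^{(m)}$ of that proof). This needs three ingredients.
(a) Use the scalar strong maximum principle of \cite{LRY16} in its pointwise form: for each fixed $\bm x\in\Om$, the homogeneous solution with data $\ge0,\not\equiv0$ is positive for a.e.\ $t$. Apply it both to $y_k^{(1)}$, $k\in I_0$, and to the integrands $v(\bm x,t;s)$ of Lemma~\ref{lem-Duhamel1}, whose data are $F(s)=(\bm B J^{(m-1)(1-\al_K)}\bm y^{(m)})_k(s)$. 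With $z=J^{(m-1)(1-\al_K)}y_k^{(m+1)}$, Fubini then gives $J^{1-\al_k}z(\bm x,t)=\int_0^t v(\bm x,t;s)\,\rd s>0$ for every $\bm x$ and a.e.\ $t$.
(b) Use that $J^\theta$ with $\theta>0$ turns a.e.\ positivity on $(0,T)$ into positivity everywhere on $(0,T)$.
(c) Use continuity in $\bm x$ of the iterates and of $\bm y$ (the regularity from \eqref{eq-reg} you already invoke) to transfer the a.e.\ monotonicity $\bm y\ge\bm y^{(M+1)}$ to the point $\bm x_0$ for a.e.\ $t$.

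Together these give $J^{M(1-\al_K)}y_k(\bm x_0,\,\cdot\,)>0$ a.e.\ in $(0,T)$. Hence $t_*=0$ in your Titchmarsh step, and $\mu'\equiv0$ follows.
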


The above theorem greatly relaxes the setting of Theorem \ref{thm-Lip} by observing only an arbitrary component of $\bm u$ at an arbitrary point $\bm x_0$. Such a relaxation is achieved at the cost of downgrading the Lipschitz stability to merely uniqueness and alternatively assuming a non-identically vanishing condition \eqref{eq-cond-rq} on $\bm g$. At a first glance, the above theorem succeeds in identifying all components of unknown $\bm\rho$ simultaneously by the single point observation of a single component of $\bm u$. However, it relies on the restrictive structural constraint \eqref{eq-cond-mu}, which means that components in $\bm\rho$ are no longer independent but are dominated by a common underlying function $\mu$. Such a restriction originates from the process of applying the coupled Duhamel's principle to our problem. As an obvious sufficiently condition of \eqref{eq-cond-mu}, an even special but seemingly acceptable case of \eqref{eq-cond-mu} could be
\[
\al_1=\cdots=\al_K,\quad\rho_1=\cdots=\rho_K,
\]
that is, governing equations of all components share the same fractional order and the temporal part of the source term. Anyway, Theorem \ref{thm-ISP} actually recovers one scalar-valued unknown by observing one scalar-valued function, which coincides with the similar result for a single equation in \cite{LRY16} but technical more involved. Indeed, owing to the non-negativity assumption \eqref{eq-cond-rq}, the observable component $u_k$ is allowed to vanish identically at $t=0$, which is impossible for a single equation.

For the moment, the determination of independent components of $\bm\rho$ by partial solution components seems unavailable by our current methodology, which deserves further investigations in the future. Even so, Theorem \ref{thm-ISP} still greatly improves the result in Theorem \ref{thm-Lip} in the following aspects. First, the regularity of the known component $\bm g$ is lowered, since now it is enough for $\bm u(\bm x_0,\,\cdot\,)$ to make pointwise sense instead of $\pa_t^{\bm\al}\bm u(\bm x_0,\,\cdot\,)$. More importantly, the choice of the observation point $\bm x_0$ can be more flexible.


\section{Proofs of the Main Results}\label{sec-proof}

From now on, by $C>0$ we denote generic constants which may change from line to line.


\subsection{Proof of Theorem \ref{thm-Lip}}

The general strategy for showing Theorem \ref{thm-Lip} follows the same line as those for \cite[Theorem 4.4]{SY11} and \cite[Theorem 5.1]{HLY20}. More precisely, owing to the key assumption $\det\bm G(\bm x_0)\ne0$, we directly take $\bm x=\bm x_0$ in the governing equation of \eqref{eq-IBVP-u0} and multiply $\bm G^{-1}(\bm x_0)$ on both sides to obtain
\begin{align}
|\bm\rho(t)| & =|\bm G^{-1}(\bm x_0)(\pa_t^{\bm\al}+\BA+\bm C)\bm u(\bm x_0,t)|\nonumber\\
& \le C(|\pa_t^{\bm\al}\bm u(\bm x_0,t)|+|\BA\bm u(\bm x_0,t)|+|\bm C\bm u(\bm x_0,t)|).\label{eq-est-rho}
\end{align}
Then the observable data $\pa_t^{\bm\al}\bm u(\bm x_0,t)$ appears on the right-hand side, and it suffices to estimate $(\BA+\bm C)\bm u(\bm x_0,t)$ by some weakly singular integral involving $\bm\rho$. Then the desired inequality \eqref{eq-Lip} follows immediately by applying a general Gr\"onwall's inequality. Different from the scalar-valued case in \cite{SY11}, estimates via explicit solutions are no longer available for coupled systems. Therefore, this section mainly aims at establishing such an estimate for $\bm u(t)$ without using explicit solutions.

To this end, we start with recalling some basic facts on the mild solution to coupled subdiffusion equations established in Li, Huang, Liu \cite{LHL23}. First we invoke the resolvent operator $\cS_k(t):L^2(\Om)\longrightarrow L^2(\Om)$ and its formal derivative $\cS'_k(t):L^2(\Om)\longrightarrow L^2(\Om)$ generated by the eigensystem $\{(\la_n^{(k)},\vp_n^{(k)})\}$ of $\cA_k$ for $k=1,\dots,K$:
\begin{align*}
\cS_k(t)\psi & :=\sum_{n=1}^\infty E_{\al_k,1}(-\la_n^{(k)}t^{\al_k})(\psi,\vp_n^{(k)})\vp_n^{(k)},\\
\cS'_k(t)\psi & :=-t^{\al_k-1}\sum_{n=1}^\infty E_{\al_k,\al_k}(-\la_n^{(k)}t^{\al_k})(\psi,\vp_n^{(k)})\vp_n^{(k)}.
\end{align*}
We recall the following key estimate for $\cS'_k(t)$ from Li, Huang, Yamamoto \cite{LHY20}.

\begin{lem}\label{lem-resolvent}
Let $\be\ge0$ be constant. There exists a constant $C_0>0$ depending only on $\Om,\bm\al,\BA$ such that for any $\ga\in[0,1],$ any $k=1,\dots,K$ and any $\psi\in D(\cA_k^\be),$ there holds
\[
\|\cS'_k(t)\psi\|_{D(\cA_k^{\be+\ga-1})}\le C_0\|\psi\|_{D(\cA_k^\be)}t^{\al_k(1-\ga)-1}.
\]
\end{lem}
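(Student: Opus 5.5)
We argue directly from the eigenfunction expansion of $\cS'_k(t)$: the estimate reduces to a bound on a single scalar multiplier, uniform in $n$. This bound holds for all $t>0$, not only $t\in(0,T)$.

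\emph{Step 1 (reduction to a multiplier).} Fix $k$, $\be\ge0$, $\ga\in[0,1]$ and $\psi\in D(\cA_k^\be)$. Write $\psi_n:=(\psi,\vp_n^{(k)})$ and $\la_n:=\la_n^{(k)}$. By the definition of $\cS'_k(t)$ and of the norms on $D(\cA_k^{\be+\ga-1})$, we have
\[
\|\cS'_k(t)\psi\|_{D(\cA_k^{\be+\ga-1})}^2=t^{2(\al_k-1)}\sum_{n=1}^\infty\la_n^{2(\be+\ga-1)}\bigl|E_{\al_k,\al_k}(-\la_nt^{\al_k})\bigr|^2|\psi_n|^2 .
\]
Since $\|\psi\|_{D(\cA_k^\be)}^2=\sum_n\la_n^{2\be}|\psi_n|^2$, it suffices to prove the uniform multiplier bound
\[
t^{\al_k-1}\la_n^{\ga-1}\bigl|E_{\al_k,\al_k}(-\la_nt^{\al_k})\bigr|\le C_0\,t^{\al_k(1-\ga)-1},\qquad n\in\BN,\ t>0 .
\]
Equivalently, we need $\la_n^{\ga-1}|E_{\al_k,\al_k}(-\la_nt^{\al_k})|\le C_0\,t^{-\al_k\ga}$.

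\emph{Step 2 (the Mittag-Leffler bound).} We use the standard estimate $|E_{\al,\al}(-\eta)|\le C_{\al}/(1+\eta)$ for $\eta\ge0$ and $0<\al<1$. Since only the finitely many orders $\al_1,\dots,\al_K$ occur, the constant can be taken uniform in $k$. Set $\eta:=\la_nt^{\al_k}$, so that $\la_n^{\ga}=\eta^{\ga}t^{-\al_k\ga}$. This gives
\[
\la_n^{\ga-1}\bigl|E_{\al_k,\al_k}(-\la_nt^{\al_k})\bigr|\le C\,\frac{\la_n^{\ga}}{\la_n(1+\eta)}=\frac{C\,t^{-\al_k\ga}}{\la_n}\cdot\frac{\eta^\ga}{1+\eta}.
\]

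\emph{Step 3 (the elementary inequality).} For $\ga\in[0,1]$ and $\eta\ge0$ we have $\eta^\ga\le\max(1,\eta)\le1+\eta$, hence $\eta^\ga/(1+\eta)\le1$. Moreover $\la_n\ge\la_1^{(k)}>0$. Together these give $\la_n^{\ga-1}|E_{\al_k,\al_k}(-\la_nt^{\al_k})|\le (C/\la_1^{(k)})\,t^{-\al_k\ga}$. Multiplying by $t^{\al_k-1}$ yields exactly the stated exponent $\al_k(1-\ga)-1$. Summing over $n$ then proves the lemma with
\[
C_0:=\max_k\frac{C_{\al_k}}{\la_1^{(k)}},
\]
which depends only on $\bm\al$ and $\BA$ (through $\Om$ and the coefficients). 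It is independent of $\ga$, $\be$, $\psi$ and $t$.

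The only nontrivial ingredient is the decay estimate for $E_{\al,\al}(-\eta)$ on the negative real axis, which we take from the standard literature. The rest is bookkeeping. The key point is that the factor $1/\la_n$ left after extracting $\eta^\ga$ is controlled by the first eigenvalue. This is what permits the whole range $\ga\in[0,1]$, including $\ga=0$, where the bound becomes the smoothing-free estimate $t^{\al_k-1}$ in a norm one order lower.
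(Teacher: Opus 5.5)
Your proof is correct and is essentially the argument the paper relies on. The paper omits the proof and defers to the spectral computation in \cite{LHY20}: expand $\cS'_k(t)\psi$ in the eigenbasis, use $|E_{\al_k,\al_k}(-\eta)|\le C/(1+\eta)$, $\eta^\ga/(1+\eta)\le1$ and $\la_n\ge\la_1^{(k)}$, and note that the weight $\la_n^{2\be}$ passes through unchanged because the multiplier is diagonal in that basis, which is the routine extension from $\psi\in L^2(\Om)$ to $\psi\in D(\cA_k^\be)$ that the paper asserts without detail.
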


The expression above slightly generalizes the original one in \cite{LHY20} which only dealt with $\psi\in L^2(\Om)$. Since the argument is almost identical, we omit the proof here. The essence is the smoothing effect of $\cS'_k(t)$ from the initial regularity of $\psi$. As before, we denote
\[
\BS(t):=\diag(\cS_1(t),\dots,\cS_K(t)),\quad\BA^{-1}\BS'(t):=\diag(\cA_1^{-1}\cS'_1(t),\dots,\cA_K^{-1}\cS'_K(t)).
\]

For later convenience, we recall the definition of a mild solution to a slightly more general problem formulation than \eqref{eq-IBVP-u0}:
\begin{equation}\label{eq-IBVP-u2}
\begin{cases}
\pa_t^{\bm\al}(\bm u-\bm u_0)+(\BA+\bm C)\bm u=\bm F & \mbox{in }\Om\times(0,T),\\
\bm u=\bm0 & \mbox{on }\pa\Om\times(0,T).
\end{cases}
\end{equation}

\begin{defi}[{Mild solution; see \cite[Definition 1]{LHL23}}]
Fix $p\in[1,\infty]$ and let
\[
\bm C\in L^\infty(\Om),\quad\bm u_0\in L^2(\Om),\quad\bm F\in L^p(0,T;L^2(\Om)).
\]
We call $\bm u$ a mild solution to the initial-boundary value problem \eqref{eq-IBVP-u2} if it satisfies the integral equation
\begin{equation}\label{eq-mildsol}
\bm u=\bm w+\BQ\bm u\quad\mbox{in }\Om\times(0,T),
\end{equation}
where
\begin{align}
\bm w(t) & :=\BS(t)\bm u_0-\int_0^t\BA^{-1}\BS'(t-\tau)\bm F(\tau)\,\rd\tau,\label{eq-def-w}\\
\BQ\bm u(t) & :=\int_0^t\BA^{-1}\BS'(t-\tau)\bm C\bm u(\tau)\,\rd\tau.\label{eq-def-Q}
\end{align}
\end{defi}

The following lemma collects basic well-posedness results for \eqref{eq-IBVP-u2}, which, similarly to Lemma \ref{lem-resolvent}, also generalizes the regularities of $\bm u_0$ and $\bm F$ in \cite{LHL23} for flexibility.

\begin{lem}[{see \cite[Theorem 1]{LHL23}}]\label{lem-well}
Fix $\be\ge0,$ $p\in[1,\infty]$ and assume
\[
\bm C\in C^\infty(\ov\Om),\quad\bm u_0\in D(\BA^\be),\quad\bm F\in L^p(0,T;D(\BA^\be)).
\]

{\rm(i)} If $\bm F\equiv\bm0,$ then there exists a unique mild solution
\[
\bm u\in L^{1/\ga}(0,T;D(\BA^{\be+\ga}))
\]
to \eqref{eq-IBVP-u2} for any $\ga\in[0,1],$ where it is understood $1/\ga=\infty$ if $\ga=0$. Moreover, there exists a constant $C>0$ depending only on $\Om,\bm\al,\BA,\bm C,T,\ga$ such that
\begin{gather*}
\|\bm u(t)\|_{D(\BA^{\be+\ga})}\le C\|\bm u_0\|_{D(\BA^\be)}t^{-\al_1\ga},\ 0<t<T,\\
\|\bm u\|_{L^{1/\ga}(0,T;D(\BA^{\be+\ga}))}\le C\|\bm u_0\|_{D(\BA^\be)}.
\end{gather*}

{\rm(ii)} If $\bm u_0\equiv\bm0,$ then there exists a unique mild solution
\[
\bm u\in L^p(0,T;D(\BA^{\be+\ga}))
\]
to \eqref{eq-IBVP-u2} for any $\ga\in[0,1)$ and there exists a constant $C>0$ depending only on $\Om,\bm\al,\BA,\bm C,T,\ga$ such that
\[
\|\bm u\|_{L^p(0,T;D(\BA^{\be+\ga}))}\le C\|\bm F\|_{L^p(0,T;D(\BA^\be))}.
\]
\end{lem}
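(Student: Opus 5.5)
The plan is to treat the integral equation \eqref{eq-mildsol} as a Volterra-type fixed-point problem in a space of $D(\BA^\be)$-valued functions. First I would solve it in a low-regularity space, and then bootstrap the spatial regularity using the smoothing estimate of Lemma \ref{lem-resolvent}. The argument follows \cite[Theorem 1]{LHL23}. The only new ingredient is that every estimate is carried out in $D(\BA^\be)$ rather than in $L^2(\Om)$, which Lemma \ref{lem-resolvent} permits since it is already stated for $\psi\in D(\cA_k^\be)$.

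\emph{Step 1: estimates for $\bm w$.} For (i), the eigenfunction expansion together with $|E_{\al_k,1}(-\eta)|\le C/(1+\eta)$ gives
\[
\|\cS_k(t)\psi\|_{D(\cA_k^{\be+\ga})}\le C\|\psi\|_{D(\cA_k^\be)}t^{-\al_k\ga}.
\]
Since $t^{-\al_k\ga}\le Ct^{-\al_1\ga}$ on $(0,T)$, and $t^{-\al_1\ga}\in L^{1/\ga}(0,T)$ because $\al_1<1$, this bound also yields the $L^{1/\ga}$ estimate. For (ii), Lemma \ref{lem-resolvent} shows that the kernel of the Duhamel term in \eqref{eq-def-w}, viewed as a map $D(\cA_k^\be)\to D(\cA_k^{\be+\ga})$, is bounded by $C_0t^{\al_k(1-\ga)-1}$. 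This kernel lies in $L^1(0,T)$ exactly when $\ga<1$. Young's convolution inequality then gives $\|\bm w\|_{L^p(0,T;D(\BA^{\be+\ga}))}\le C\|\bm F\|_{L^p(0,T;D(\BA^\be))}$.

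\emph{Step 2: existence and uniqueness.} I would use that multiplication by $\bm C\in C^\infty(\ov\Om)$ is bounded on $D(\BA^\be)$, at least for the relevant range of $\be$ where no extra boundary compatibility conditions enter. With Lemma \ref{lem-resolvent} applied at $\ga=1$ (no gain), this gives
\[
\|\BQ\bm u(t)\|_{D(\BA^\be)}\le C\int_0^t(t-\tau)^{\al_K-1}\|\bm u(\tau)\|_{D(\BA^\be)}\,\rd\tau .
\]
Iterating this bound shows that $\BQ^n$ has kernel bounded by $C^n\Ga(\al_K)^n(t-\tau)^{n\al_K-1}/\Ga(n\al_K)$. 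Hence the Neumann series $\sum_n\BQ^n\bm w$ converges in $L^p(0,T;D(\BA^\be))$, and in case (i) it also converges in the weighted space $\{\bm u:\ t^{\al_1\ga}\bm u\in L^\infty(0,T;D(\BA^{\be}))\}$. Uniqueness follows in the same way: the difference of two solutions satisfies $\bm z=\BQ\bm z$, and the generalized Gr\"onwall inequality forces $\bm z=\bm0$.

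\emph{Step 3: regularity bootstrap.} Write $\bm u=\bm w+\BQ\bm u$. Since $\bm C\bm u\in L^p(0,T;D(\BA^\be))$, Step 1 applied with $\bm F=-\bm C\bm u$ shows $\BQ\bm u\in L^p(0,T;D(\BA^{\be+\ga}))$ for every $\ga<1$, which closes (ii).

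For (i), the pointwise bound $t^{-\al_1\ga}$ for $\BQ\bm u$ follows from a Beta-integral computation: $\int_0^t(t-\tau)^{\al_K(1-\ga)-1}\tau^{-\al_1\ga'}\rd\tau\le Ct^{-\al_1\ga}$ once $\ga'$ is chosen slightly smaller than $\ga$. I would reach $\ga$ in finitely many bootstrap steps.

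I expect the main obstacle to be the endpoint $\ga=1$ in (i), since there the kernel $t^{-1}$ is not integrable. The first thing I would try is to split $\BQ\bm u(t)$ as
\[
\int_0^t\BA^{-1}\BS'(t-\tau)\bm C\big(\bm u(\tau)-\bm u(t)\big)\,\rd\tau+\Big(\int_0^t\BA^{-1}\BS'(s)\,\rd s\Big)\bm C\bm u(t),
\]
and handle each piece separately.
\begin{itemize}
\item The second piece is controlled using $\int_0^t\BA^{-1}\BS'(s)\,\rd s=\BS(t)-I$ on each component, combined with the already-established bound $\bm u(t)\in D(\BA^{\be+\ga'})$ for $\ga'<1$.
\item The first piece is controlled by a H\"older-type time regularity of $\bm u$ in $D(\BA^\be)$, inherited from $\BS$ and valid on intervals away from $t=0$.
\end{itemize}
A secondary technical point is the boundedness of multiplication by $\bm C$ on $D(\BA^\be)$ for large $\be$, where compatibility conditions appear. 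If this fails, I would restrict to the ranges actually used later, namely $\be\le1$ or spatially localized data, or else interpolate.
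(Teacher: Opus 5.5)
Your route (the Neumann series for $\bm u=\bm w+\BQ\bm u$ in $D(\BA^\be)$-valued spaces, with Lemma \ref{lem-resolvent} supplying the kernel bounds) is the Picard-iteration argument of \cite{LHL23}. The paper only cites that result and gives no proof of its own. Steps 1--2, part (ii), and part (i) for $\ga<1$ are fine, apart from two slips:

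\begin{enumerate}
\item The no-gain bound in Step 2 is Lemma \ref{lem-resolvent} with $\ga=0$, not $\ga=1$.
\item No bootstrap is needed for $\ga<1$. The bound $\sup_\tau\|\bm u(\tau)\|_{D(\BA^\be)}\le C\|\bm u_0\|_{D(\BA^\be)}$ already gives $\|\BQ\bm u(t)\|_{D(\BA^{\be+\ga})}\le Ct^{\al_K(1-\ga)}\|\bm u_0\|_{D(\BA^\be)}$.
\end{enumerate}

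\textbf{The endpoint $\ga=1$ fails as written.} Take $\cS_k'=\f{\rd}{\rd t}\cS_k$; this is the normalization that makes \eqref{eq-def-w} the Duhamel formula (the displayed series for $\cS_k'$ is missing a factor $\la_n^{(k)}$). Then
\[
\int_0^t\cA_k^{-1}\cS_k'(s)\,\rd s=\cA_k^{-1}\big(\cS_k(t)-I\big),
\]
not $\cS_k(t)-I$. With your identity, the second piece requires $\bm C\bm u(t)\in D(\BA^{\be+1})$, which is exactly what you are trying to prove, so the step is circular. With the correct identity the second piece is trivial: its $D(\BA^{\be+1})$-norm equals $\|(\BS(t)-I)\bm C\bm u(t)\|_{D(\BA^\be)}\le C\|\bm u_0\|_{D(\BA^\be)}$.

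For the first piece, split the integral at $t/2$. On $(0,t/2)$ the kernel is at most $2C_0/t$. On $(t/2,t)$ you need a scale-invariant bound $\|\bm u(t)-\bm u(\tau)\|_{D(\BA^\be)}\le C\big(\f{t-\tau}{\tau}\big)^{\al_K}\|\bm u_0\|_{D(\BA^\be)}$. This follows from $\|\BS'(s)\bm u_0\|_{D(\BA^\be)}\le C_0s^{-1}\|\bm u_0\|_{D(\BA^\be)}$ together with the H\"older continuity of $\BQ\bm u$ in $D(\BA^\be)$.

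Once repaired, the splitting beats the obvious bootstrap. That bootstrap applies Lemma \ref{lem-resolvent} on $D(\BA^{\be+\ga'})$ with exponent $1-\ga'$, giving $\int_0^t(t-\tau)^{\al_K\ga'-1}\tau^{-\al_1\ga'}\,\rd\tau\le Ct^{-\al_1}$. But it needs $\bm C$ to act on $D(\BA^{\be+\ga'})$, whereas the splitting needs it only on $D(\BA^\be)$.

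\textbf{Your ``secondary'' caveat is decisive.} For $\be>5/4$, membership in $D(\cA_k^\be)$ requires $\cA_k\psi=0$ on $\pa\Om$. This condition is destroyed by non-constant multipliers and depends on $k$. So the multiplier bound in your Step 2 fails there, and so does the one the paper uses without comment in the proof of Lemma \ref{lem-est-Q}.

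The lemma itself is false for large $\be$. Take $K=1$, $\Om=(0,1)$, $\cA_1=-\pa_x^2$, $c_{11}(x)=x$, $\be=2$, and $\bm u_0=\phi_1$, the first Dirichlet eigenfunction of $-\pa_x^2+x$ with eigenvalue $\mu_1$.
\begin{itemize}
\item Since $\phi_1''=(x-\mu_1)\phi_1$ vanishes at $x=0,1$, we have $\phi_1\in D(\cA_1^2)$.
\item The solution is $u(t)=E_{\al_1,1}(-\mu_1t^{\al_1})\phi_1$.
\item Differentiating twice gives $\phi_1''''(0)=2\phi_1'(0)\ne0$, by ODE uniqueness.
\end{itemize}
Hence $u(t)\notin D(\cA_1^{2+\ga})$ for every $t$ and every $\ga>1/4$.

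Interpolation cannot repair this. The lemma must be restricted, e.g.\ to $\be<5/4$ for smooth $\bm A_k$, where $D(\cA_k^\be)$ is independent of $k$ and stable under smooth multipliers. In that range your argument, with the endpoint corrected, proves it. Note also that ``$\be\le1$'' does not cover the later uses: Theorem \ref{thm-Lip} needs $\be>d/4$, which exceeds $5/4$ for $d\ge5$.
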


Although the above lemma already provides useful estimates for $\bm u$, they fail to take the form of weakly singular integrals involving $\bm F$ and thus insufficient for the proof of Theorem \ref{thm-Lip}. Therefore, we should perform more sophisticated estimates for the mild solution, especially that for the operator $\BQ$ defined in \eqref{eq-def-Q}.

\begin{lem}\label{lem-est-Q}
Let $\be\ge0$ be constant and $\bm y\in L^\infty(0,T;D(\BA^\be))$. Then $\BQ^m\bm y\in L^\infty(0,T;D(\BA^\be))$ for any $m\in\BN$. Moreover, there exists a constant $L>0$ depending only on $\Om,\bm\al,\BA,\bm C,T,K$ such that
\begin{equation}\label{eq-est-Q}
\|\BQ^m\bm y(t)\|_{D(\BA^\be)}\le L^m J^{\al_K m}\|\bm y(t)\|_{D(\BA^\be)},\quad0<t<T.
\end{equation}
\end{lem}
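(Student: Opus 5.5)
The plan is induction on $m$, with the case $m=1$ carrying all the analysis. For $m=1$, fix $t\in(0,T)$ and write
\[
\BQ\bm y(t)=\int_0^t\BA^{-1}\BS'(t-\tau)\bm C\bm y(\tau)\,\rd\tau .
\]
Component-wise, the $k$-th entry is $\int_0^t\cA_k^{-1}\cS'_k(t-\tau)\bigl(\sum_\ell c_{k\ell}y_\ell(\tau)\bigr)\rd\tau$. I will apply Lemma \ref{lem-resolvent} with $\ga=0$ and with $\be$ as given:
\[
\|\cA_k^{-1}\cS'_k(s)\psi\|_{D(\cA_k^\be)}=\|\cS'_k(s)\psi\|_{D(\cA_k^{\be-1})}\le C_0\|\psi\|_{D(\cA_k^\be)}s^{\al_k-1}.
\]
Here $\psi=\sum_\ell c_{k\ell}y_\ell$.

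Next I need the multiplication estimate $\|c_{k\ell}y_\ell\|_{D(\cA_k^\be)}\le C\|y_\ell\|_{D(\cA_\ell^\be)}$. This is where $\bm C\in C^\infty(\ov\Om)$ enters, through the norm equivalence $D(\cA_k^\be)\simeq H^{2\be}(\Om)$ with boundary conditions. I expect this to be the main obstacle. Multiplication by a smooth function preserves $H^{2\be}$, and it preserves the compatibility conditions that define $D(\cA_k^\be)$ for larger $\be$, because those involve only vanishing traces of $\psi$ and of $\cA^j\psi$ on $\pa\Om$. The second point is not automatic for higher powers: $\cA(c\psi)$ need not vanish on $\pa\Om$.

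My first attempt is to use that, for smooth $c$, $\cA_k(c\psi)=c\cA_\ell\psi+(\text{lower order terms in }\psi)$ and argue inductively in $\lfloor\be\rfloor$, relying on \cite{LHL23} for the same fact. If that fails, I will restrict to the range of $\be$ actually needed, namely $\be$ slightly above $d/4$ treated through interpolation, or I will simply cite the corresponding statement in \cite{LHL23}.

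With the multiplication estimate in hand, I obtain
\[
\|\BQ\bm y(t)\|_{D(\BA^\be)}\le C\int_0^t\max_k (t-\tau)^{\al_k-1}\|\bm y(\tau)\|_{D(\BA^\be)}\rd\tau .
\]
Since $\al_k\ge\al_K$ and $t-\tau\le T$, I will bound $s^{\al_k-1}\le T^{\al_k-\al_K}s^{\al_K-1}$, and then $s^{\al_K-1}=\Ga(\al_K)k_{\al_K}(s)$. This gives $\|\BQ\bm y(t)\|\le L J^{\al_K}\|\bm y(\cdot)\|(t)$, where $L$ depends on $C_0,\bm C,T,K,\al$. The right-hand side is finite and bounded in $t$ because $\bm y\in L^\infty$, which yields $\BQ\bm y\in L^\infty(0,T;D(\BA^\be))$ and the required measurability.

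For the inductive step, I assume the bound for $m$ and apply the $m=1$ estimate to $\BQ^m\bm y$, which lies in $L^\infty(0,T;D(\BA^\be))$ by the inductive hypothesis:
\[
\|\BQ^{m+1}\bm y(t)\|\le LJ^{\al_K}\|\BQ^m\bm y\|(t)\le L\,J^{\al_K}\bigl(L^mJ^{\al_Km}\|\bm y\|\bigr)(t).
\]
This step uses the positivity of the kernel $k_{\al_K}$, so that $J^{\al_K}$ preserves pointwise inequalities between non-negative functions. Finally, the semigroup property $J^{\al_K}J^{\al_Km}=J^{\al_K(m+1)}$, which follows from $k_a*k_b=k_{a+b}$, gives \eqref{eq-est-Q}.
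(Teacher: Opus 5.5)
Your outline is the paper's proof step for step. Both use induction on $m$, Lemma \ref{lem-resolvent} with $\ga=0$, a multiplication bound for $\bm C$, domination of $k_{\al_k}$ by a multiple of $k_{\al_K}$, and positivity of the kernel with $k_a*k_b=k_{a+b}$ in the inductive step. Your bound $s^{\al_k-1}\le T^{\al_k-\al_K}s^{\al_K-1}$ for $0<s\le T$ is a slightly cleaner version of the paper's two-case estimate.

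The paper settles the multiplication step in one line (``since $\bm C$ is smooth''), so everything rests on the point you flagged. Your suspicion is justified, but the repair you propose cannot work. Write $\cA_k(c\psi)=c\,\cA_k\psi-2\bm A_k\nb c\cdot\nb\psi+(\cA_k c)\psi$ and use $\nb\psi=(\pa_\nu\psi)\nu$ on $\pa\Om$. For $\psi\in D(\cA_k^\be)$ with $\be>5/4$ this gives $\cA_k(c\psi)=-2(\pa_\nu\psi)\,\bm A_k\nu\cdot\nb c$ on $\pa\Om$, which is nonzero in general. Concretely, take $\Om=(0,1)$ and $\cA=-\pa_x^2$. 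Then $\sin(\pi x)$ lies in every $D(\cA^\be)$. But the sine coefficients of $x\sin(\pi x)$ decay only like $n^{-3}$, so it lies in no $D(\cA^\be)$ with $\be\ge5/4$.

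For $k\ne\ell$ there is a second obstruction. The difference $\cA_k-\cA_\ell$ is of second order, not lower order, and $\cA_\ell y_\ell=0$ on $\pa\Om$ does not give $\cA_k y_\ell=0$ there. So the bound fails even for constant $c_{k\ell}$.

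This is not an artifact of the method. Take $K=1$, time-independent $\bm y=y$, and $\cS_1'$ the time derivative of $\cS_1$. Then $\BQ\bm y(t)=\cA_1^{-1}(\cS_1(t)-I)(c\,y)$. Divide the case $m=1$ of \eqref{eq-est-Q} by $t^{\al_1}$ and let $t\to0$, using Fatou on the eigenexpansion. This returns exactly $\|c\,y\|_{D(\cA_1^\be)}\le L\|y\|_{D(\cA_1^\be)}$. Hence the lemma as stated, for all $\be\ge0$ and arbitrary $\bm C\in C^\infty(\ov\Om)$, is false for $\be\ge5/4$, and there is nothing in \cite{LHL23} or elsewhere to cite.

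The multiplication bound, and with it the lemma, is safe only in a restricted range. For $0\le\be\le1$ it follows by interpolation: $D(\cA_k^\be)=[L^2(\Om),D(\cA_k)]_\be$, and $D(\cA_k)=H^2(\Om)\cap H_0^1(\Om)$ is independent of $k$ and stable under multiplication by $C^\infty(\ov\Om)$ functions. With the usual characterization $D(\cA_k^\be)=H^{2\be}(\Om)\cap H_0^1(\Om)$, the bound extends to all $\be<5/4$. Beyond that you would need extra boundary structure on $\bm C$. Restricting $\be$ in the statement is therefore the right fix, not an induction on the integer part of $\be$.

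However, the range needed later is not ``$\be$ slightly above $d/4$''. Proposition \ref{prop-mildsol} applies this lemma with $\be$ replaced by $\be+\ga$. In the proof of Theorem \ref{thm-Lip} that exponent is $\be+1-\ve=1+d/4+\ve>5/4$, precisely where the multiplication bound breaks down. The paper's one-line justification does not cover that regime either.
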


\begin{proof}
We show by induction. For $m=1$, first we follow the definition of $\BQ$ and apply Lemma \ref{lem-resolvent} with $\ga=0$ to estimate
\begin{align*}
\|\BQ\bm y(t)\|_{D(\BA^\be)} & =\sum_{k=1}^K\left\|\int_0^t\cA_k^{-1}\cS'_k(t-\tau)(\bm C\bm y)_k(\tau)\,\rd\tau\right\|_{D(\cA_k^\be)}\\
& \le\sum_{k=1}^K\int_0^t\left\|\cA_k^{-1}\cS'_k(t-\tau)(\bm C\bm y)_k(\tau)\right\|_{D(\cA_k^\be)}\,\rd\tau\\
& =\sum_{k=1}^K\int_0^t\left\|\cS'_k(t-\tau)(\bm C\bm y)_k(\tau)\right\|_{D(\cA_k^{\be-1})}\,\rd\tau\\
& \le C_0\sum_{k=1}^K\int_0^t\|(\bm C\bm y)_k(\tau)\|_{D(\cA_k^\be)}(t-\tau)^{\al_k-1}\,\rd\tau,
\end{align*}
where $(\bm C\bm y)_k$ denotes the $k$-th component of $\bm C\bm y$. Since $\bm C$ is smooth, there exists a constant $C_1>0$ depending only on $\bm C$ such that $\|(\bm C\bm y)_k(\tau)\|_{D(\cA_k^\be)}\le C_1\|\bm y(\tau)\|_{D(\cA_k^\be)}$ for any $k=1,\dots,K$. Therefore,
\begin{align}
\|\BQ\bm y(t)\|_{D(\BA^\be)} & \le C_0C_1\sum_{k=1}^K\int_0^t\|\bm y(\tau)\|_{D(\cA_k^\be)}(t-\tau)^{\al_k-1}\,\rd\tau\nonumber\\
& =C_0C_1\sum_{k=1}^K\Ga(\al_k)\int_0^t\|\bm y(t-\tau)\|_{D(\cA_k^\be)}\f{\tau^{\al_k-1}}{\Ga(\al_k)}\,\rd\tau\nonumber\\
& \le C_0C_1\Ga(\al_K)\sum_{k=1}^K\int_0^t\|\bm y(t-\tau)\|_{D(\cA_k^\be)}k_{\al_k}(\tau)\,\rd\tau,\label{eq-est-Q0}
\end{align}
where $k_{\al_k}$ was the integral kernel of $J^{\al_k}$ defined in \eqref{eq-def-RL} Here we estimate the kernel of $J^{\al_k}$ as
\begin{align*}
k_{\al_k}(\tau) & =\f{\tau^{\al_k-1}}{\Ga(\al_k)}\le\left\{\begin{alignedat}{2}
& \f{\tau^{\al_K-1}}{\Ga(\al_1)}, &\quad & 0<\tau\le1,\\
& \f{T^{\al_1-\al_K}}{\Ga(\al_1)}\tau^{\al_K-1}, &\quad &\tau\ge1
\end{alignedat}\right\}\\
& \le\f{\max\{1,T^{\al_1-\al_K}\}}{\Ga(\al_1)}\tau^{\al_K-1}=\f{\Ga(\al_K)}{\Ga(\al_1)}\max\{1,T^{\al_1-\al_K}\}\,k_{\al_K}(\tau).
\end{align*}
Substituting the above estimate into \eqref{eq-est-Q0}, we obtain
\begin{align}
& \quad\,\|\BQ\bm y(t)\|_{D(\BA^\be)}\nonumber\\
& \le C_0C_1\f{\Ga^2(\al_K)}{\Ga(\al_1)}\max\{1,T^{\al_1-\al_K}\}\sum_{k=1}^K\int_0^t\|\bm y(t-\tau)\|_{D(\BA^\be)}k_{\al_K}(\tau)\,\rd\tau\nonumber\\
& =L J^{\al_K}\|\bm y(t)\|_{D(\BA^\be)},\label{eq-est-Q1}
\end{align}
where
\[
L:=C_0C_1\f{\Ga^2(\al_K)}{\Ga(\al_1)}\max\{1,T^{\al_1-\al_K}\}K.
\]
Then \eqref{eq-est-Q} holds for $m=1$.

For $m\ge2$, assume that \eqref{eq-est-Q} holds until $m-1$. Then we combine \eqref{eq-est-Q} with $m-1$ and \eqref{eq-est-Q1} to deduce
\begin{align*}
\|\BQ^m\bm y(t)\|_{D(\BA^\be)} & =\|\BQ^{m-1}(\BQ\bm y)(t)\|_{D(\BA^\be)}\le L^{m-1}J^{\al_K(m-1)}\left(\|\BQ\bm y(t)\|_{D(\BA^\be)}\right)\\
& \le L^{m-1}J^{\al_K(m-1)}\left(L J^{\al_K}\|\bm y(t)\|_{D(\BA^\be)}\right)\\
& =L^m J^{\al_K m}\|\bm y(t)\|_{D(\BA^\be)}
\end{align*}
by the associative property of $J^{\al_K}$. This completes the proof of \eqref{eq-est-Q} for general $m\ge2$.
\end{proof}

In the same manner, one can estimate $\bm w$ defined in \eqref{eq-def-w} and we skip the proof here.

\begin{lem}\label{lem-est-w}
Let $\bm w$ be defined in $\eqref{eq-def-w},$ where $\bm u_0\equiv\bm0$ and $\bm F\in L^\infty(0,T;D(\BA^\be))$ with a constant $\be\ge0$. Then $\bm w\in L^\infty(0,T;D(\BA^{\be+\ga}))$ for any $\ga\in[0,1)$ and there exists a constant $L'>0$ depending only on $\Om,\bm\al,\BA,T,K$ such that
\[
\|\bm w(t)\|_{D(\BA^{\be+\ga})}\le L'J^{\al_K(1-\ga)}\|\bm F(t)\|_{D(\BA^\be)},\quad0<t<T.
\]
\end{lem}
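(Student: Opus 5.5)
With $\bm u_0\equiv\bm0$, the $k$-th component of $\bm w$ is
\[
w_k(t)=-\int_0^t\cA_k^{-1}\cS'_k(t-\tau)F_k(\tau)\,\rd\tau .
\]
I would argue exactly as in the case $m=1$ of Lemma \ref{lem-est-Q}, with the smoothing parameter $\ga$ in Lemma \ref{lem-resolvent} kept general instead of set to zero. The steps are: bound each component by the integral of its norm, apply Lemma \ref{lem-resolvent} to obtain a weakly singular kernel of exponent $\al_k(1-\ga)-1$, and then dominate all the kernels $k_{\al_k(1-\ga)}$ by a multiple of $k_{\al_K(1-\ga)}$ on $(0,T)$.

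\textbf{Pointwise bound.} For fixed $t\in(0,T)$ and each $k$, Minkowski's integral inequality and the identity $\|\cA_k^{-1}\psi\|_{D(\cA_k^{\be+\ga})}=\|\psi\|_{D(\cA_k^{\be+\ga-1})}$ give
\[
\|w_k(t)\|_{D(\cA_k^{\be+\ga})}\le\int_0^t\|\cS'_k(t-\tau)F_k(\tau)\|_{D(\cA_k^{\be+\ga-1})}\,\rd\tau .
\]
Lemma \ref{lem-resolvent} then yields
\[
\|w_k(t)\|_{D(\cA_k^{\be+\ga})}\le C_0\int_0^t\|F_k(\tau)\|_{D(\cA_k^\be)}(t-\tau)^{\al_k(1-\ga)-1}\,\rd\tau .
\]
Since $\ga<1$, we have $\al_k(1-\ga)>0$, so the kernel is integrable. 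We rewrite it as $\Ga(\al_k(1-\ga))\,k_{\al_k(1-\ga)}(t-\tau)$.

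\textbf{Kernel comparison.} Set $a_k=\al_k(1-\ga)\ge a_K=\al_K(1-\ga)$. Splitting into $\tau\le1$ and $\tau\ge1$ as in the proof of Lemma \ref{lem-est-Q}, we get, for $0<\tau<T$,
\[
\tau^{a_k-1}\le\max\{1,T^{a_1-a_K}\}\,\tau^{a_K-1}\le\max\{1,T^{\al_1-\al_K}\}\,\tau^{a_K-1}.
\]
Summing over $k$ (norm equivalence in $\BR^K$ only contributes a factor depending on $K$) then gives
\[
\|\bm w(t)\|_{D(\BA^{\be+\ga})}\le L'J^{\al_K(1-\ga)}\|\bm F(t)\|_{D(\BA^\be)},
\]
where
\[
L'=C_0K\max\{1,T^{\al_1-\al_K}\}\,\Ga(\al_K(1-\ga))\max_k\Ga(\al_k(1-\ga))^{-1}.
\]
The membership $\bm w\in L^\infty(0,T;D(\BA^{\be+\ga}))$ follows because
\[
J^{\al_K(1-\ga)}\|\bm F\|\le\|\bm F\|_{L^\infty}\,\f{T^{\al_K(1-\ga)}}{\Ga(\al_K(1-\ga)+1)} .
\]
Measurability in $t$ is standard, as in \cite{LHL23}.

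\textbf{Main obstacle.} The only delicate point is the dependence of the constant on $\ga$. The Gamma factors stay bounded for $\ga$ in compact subsets of $[0,1)$, but $\Ga(\al_K(1-\ga))$ blows up as $\ga\to1$. The statement should therefore be read with $L'$ allowed to depend on $\ga$ as well; this is harmless, since $\ga<1$ is fixed.
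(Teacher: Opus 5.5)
Your proposal is correct and follows the argument the paper intends. The paper skips the proof, saying only that $\bm w$ is estimated ``in the same manner'' as the case $m=1$ of Lemma~\ref{lem-est-Q}: apply Lemma~\ref{lem-resolvent} with general $\ga$, then dominate $k_{\al_k(1-\ga)}$ by a multiple of $k_{\al_K(1-\ga)}$. Your remark that $L'$ must also depend on $\ga$ through $\Ga(\al_K(1-\ga))$ is right, and it matches the $\ga$-dependent constant in Proposition~\ref{prop-mildsol}.

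There is one bookkeeping slip. The factor $\max_k\Ga(\al_k(1-\ga))^{-1}$ in your $L'$ is not justified, and since it is $<1$ it makes the constant too small. Your kernel bound already gives $(t-\tau)^{\al_k(1-\ga)-1}\le\max\{1,T^{\al_1-\al_K}\}\,\Ga(\al_K(1-\ga))\,k_{\al_K(1-\ga)}(t-\tau)$ directly. So you should take $L'=C_0K\max\{1,T^{\al_1-\al_K}\}\,\Ga(\al_K(1-\ga))$.
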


Now we discuss a key observation from the integral equation \eqref{eq-mildsol} satisfied by the mild solution $\bm u$ to \eqref{eq-IBVP-u2}. Substituting \eqref{eq-mildsol} to the right-hand side of itself repeatedly, we immediately see that $\bm u$ satisfies
\[
\bm u=\sum_{\ell=0}^{m-1}\BQ^\ell\bm w+\BQ^m\bm u,\quad m\in\BN.
\]
Then one is encouraged to pass $m\to\infty$ above to conjecture a closed form for $\bm u$ as $\bm u=\sum_{m=0}^\infty\BQ^m\bm w$. Based on the above two lemmas, we can validate the existence of such a limit in suitable function spaces and arrive at the following conclusion.

\begin{prop}\label{prop-mildsol}
Let $\bm u$ be the mild solution to $\eqref{eq-IBVP-u2},$ where $\bm u_0\equiv\bm0$ and $\bm F\in L^\infty(0,T;D(\BA^\be))$ with a constant $\be\ge0$. Then $\bm u$ allows a series representation
\begin{equation}\label{eq-series}
\bm u=\sum_{m=0}^\infty\BQ^m\bm w\quad\mbox{in }L^\infty(0,T;D(\BA^{\be+\ga}))
\end{equation}
for any $\ga\in[0,1),$ where $\bm w$ and $\BQ$ were defined in \eqref{eq-def-w} and \eqref{eq-def-Q} respectively. Moreover, there exists a constant $C>0$ depending only on $\Om,\bm\al,\BA,\bm C,T,K,\ga$ such that
\begin{equation}\label{eq-est-u}
\|\bm u(t)\|_{D(\BA^{\be+\ga})}\le CJ^{\al_K(1-\ga)}\|\bm F(t)\|_{D(\BA^\be)},\quad0<t<T.
\end{equation}
\end{prop}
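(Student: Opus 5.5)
The plan is to show that the Neumann series $\sum_{m\ge0}\BQ^m\bm w$ converges in $L^\infty(0,T;D(\BA^{\be+\ga}))$, that its sum solves the integral equation \eqref{eq-mildsol}, and that it therefore coincides with the unique mild solution from Lemma \ref{lem-well}(ii). The bound \eqref{eq-est-u} will then follow by summing the termwise estimates.

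First, Lemma \ref{lem-est-w} gives $\bm w\in L^\infty(0,T;D(\BA^{\be+\ga}))$ together with the pointwise bound $\|\bm w(t)\|_{D(\BA^{\be+\ga})}\le L'J^{\al_K(1-\ga)}\|\bm F(t)\|_{D(\BA^\be)}$. Next I apply Lemma \ref{lem-est-Q} with $\be$ replaced by $\be+\ga$. Its proof only uses Lemma \ref{lem-resolvent} with $\ga=0$ and the boundedness of multiplication by the smooth $\bm C$ on $D(\BA^{\be+\ga})$, so it applies verbatim. Since $J^{\al_K m}$ is order preserving, combining the two bounds with the semigroup property of the Riemann--Liouville integrals gives
\[
\|\BQ^m\bm w(t)\|_{D(\BA^{\be+\ga})}\le L^m L'J^{\al_K m+\al_K(1-\ga)}\|\bm F(t)\|_{D(\BA^\be)}.
\]
Writing $\|\bm F\|_\infty:=\|\bm F\|_{L^\infty(0,T;D(\BA^\be))}$ and bounding $\|\bm F(t)\|$ by this constant yields
\[
\|\BQ^m\bm w(t)\|_{D(\BA^{\be+\ga})}\le L'\|\bm F\|_\infty\,\f{L^m T^{\al_K(m+1-\ga)}}{\Ga(\al_K(m+1-\ga)+1)}.
\]
This is a convergent Mittag-Leffler-type majorant, so the series converges absolutely and uniformly in $t$ in $L^\infty(0,T;D(\BA^{\be+\ga}))$. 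Let $\wt{\bm u}$ denote its sum.

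Since $\BQ$ is bounded on $L^\infty(0,T;D(\BA^{\be+\ga}))$ by the case $m=1$ of Lemma \ref{lem-est-Q}, we may pass $\BQ$ through the series:
\[
\BQ\wt{\bm u}=\sum_{m\ge1}\BQ^m\bm w=\wt{\bm u}-\bm w .
\]
Hence $\wt{\bm u}$ satisfies \eqref{eq-mildsol}. Uniqueness of solutions to \eqref{eq-mildsol} then gives $\wt{\bm u}=\bm u$. Uniqueness can be taken from Lemma \ref{lem-well}(ii), or proved directly: if $\bm z=\BQ\bm z$, then $\bm z=\BQ^m\bm z$ for every $m$, and the bound above gives $\|\bm z(t)\|\le L^m T^{\al_K m}\|\bm z\|_\infty/\Ga(\al_K m+1)\to0$. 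This proves \eqref{eq-series}.

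For \eqref{eq-est-u}, sum the termwise pointwise bounds while keeping $\|\bm F(t)\|$ inside the fractional integral:
\[
\|\bm u(t)\|_{D(\BA^{\be+\ga})}\le L'\sum_{m\ge0}L^mJ^{\al_K m}\Big(J^{\al_K(1-\ga)}\|\bm F(\,\cdot\,)\|\Big)(t).
\]
The operator $\sum_m L^mJ^{\al_K m}$ is convolution with the kernel $\delta+\sum_{m\ge1}L^m k_{\al_K m}$, which is not bounded by a constant multiple of $\delta$. The main obstacle is therefore to absorb the tail $m\ge1$ into a single $J^{\al_K(1-\ga)}$. Here I use that $k_{\al_K m+\al_K(1-\ga)}$ is dominated by $C_m k_{\al_K(1-\ga)}$ on $(0,T)$. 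Explicitly, $t^{\al_K m}\le T^{\al_K m}$ gives
\[
k_{\al_K(m+1-\ga)}(t)\le\f{\Ga(\al_K(1-\ga))\,T^{\al_K m}}{\Ga(\al_K(m+1-\ga))}\,k_{\al_K(1-\ga)}(t),\quad 0<t<T.
\]
Because $\|\bm F(t)\|\ge0$, positivity lets us compare the convolutions termwise. This gives
\[
\|\bm u(t)\|_{D(\BA^{\be+\ga})}\le L'\Ga(\al_K(1-\ga))\sum_{m\ge0}\f{(LT^{\al_K})^m}{\Ga(\al_K(m+1-\ga))}\,J^{\al_K(1-\ga)}\|\bm F(t)\|_{D(\BA^\be)}.
\]
The series in front is finite, being bounded by a Mittag-Leffler value, so \eqref{eq-est-u} holds with a constant depending only on $\Om,\bm\al,\BA,\bm C,T,K,\ga$.
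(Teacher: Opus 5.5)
Your proposal is correct and is essentially the paper's argument: Lemma~\ref{lem-est-w} combined with Lemma~\ref{lem-est-Q} (with $\be+\ga$ in place of $\be$) yields the Mittag-Leffler majorant, and your termwise kernel bound via $t^{\al_K m}\le T^{\al_K m}$ is exactly the paper's step of replacing $E_{\al_K,\al_K(1-\ga)}(L\tau^{\al_K})$ by its value at $\tau=T$, giving the same constant $L'\Ga(\al_K(1-\ga))E_{\al_K,\al_K(1-\ga)}(LT^{\al_K})$. The only difference is cosmetic: you identify the sum with $\bm u$ via the fixed-point equation plus uniqueness, whereas the paper writes $\bm u=\sum_{\ell=0}^{m-1}\BQ^\ell\bm w+\BQ^m\bm u$ and shows $\BQ^m\bm u\to0$, which is the same estimate you use in your uniqueness step.
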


\begin{proof}
Fix any $\ga\in[0,1)$. According to the previous discussion, we shall show $\lim_{m\to\infty}\BQ^m\bm u=0$ and the convergence of $\sum_{m=0}^\infty\BQ^m\bm w$ in $L^\infty(0,T;D(\BA^{\be+\ga}))$.

By Lemma \ref{lem-well}(ii), we already know $\bm u\in L^\infty(0,T;D(\BA^{\be+\ga}))$. Replacing $\be$ in Lemma \ref{lem-est-Q} by $\be+\ga$, we can directly estimate
\begin{align*}
\|\BQ^m\bm u(t)\|_{D(\BA^{\be+\ga})} & \le L^m J^{\al_K m}\|\bm u(t)\|_{D(\BA^{\be+\ga})}\\
& \le\|\bm u\|_{L^\infty(0,T;D(\BA^{\be+\ga}))}L^m(J^{\al_K m}1)(t)\\
& =\|\bm u\|_{L^\infty(0,T;D(\BA^{\be+\ga}))}L^m\f{t^{\al_K m}}{\Ga(\al_K m+1)}\\
& \le\|\bm u\|_{L^\infty(0,T;D(\BA^{\be+\ga}))}\f{(L T^{\al_K})^m}{\Ga(\al_K m+1)}\longrightarrow0
\end{align*}
as $m\to\infty$ by Stirling's formula. Meanwhile, it is obvious that $\bm w\in L^\infty(0,T;$ $D(\BA^{\be+\ga}))$ and repeating the same estimate above yields
\[
\|\BQ^m\bm w(t)\|_{D(\BA^{\be+\ga})}\le\|\bm w\|_{L^\infty(0,T;D(\BA^{\be+\ga}))}\f{(L T^{\al_K})^m}{\Ga(\al_K m+1)}
\]
for any $m\in\BN$. Recalling the definition of the Mittag-Leffler functions, we sum up the above inequality with respect to $m$ to bound
\begin{align*}
\sum_{m=0}^\infty\|\BQ^m\bm w(t)\|_{D(\BA^{\be+\ga})} & \le\|\bm w\|_{L^\infty(0,T;D(\BA^{\be+\ga}))}\sum_{m=0}^\infty\f{(L T^{\al_K})^m}{\Ga(\al_K m+1)}\\
& =\|\bm w\|_{L^\infty(0,T;D(\BA^{\be+\ga}))}E_{\al_K,1}(L T^{\al_K})<\infty,
\end{align*}
indicating the convergence of $\sum_{m=0}^\infty\BQ^m\bm w$ in $L^\infty(0,T;D(\BA^{\be+\ga}))$ by the Weierstrass $M$-test. This validates the series representation \eqref{eq-series}.

Now it remains to establish the estimate \eqref{eq-est-u} on the basis of \eqref{eq-series}. Combining a similar estimate as above with Lemma \ref{lem-est-w}, again we employ the Mittag-Leffler function to dominate
\begin{align*}
& \quad\,\|\bm u(t)\|_{D(\BA^{\be+\ga})}\\
& \le\sum_{m=0}^\infty\|\BQ^m\bm w(t)\|_{D(\BA^{\be+\ga})}\le\sum_{m=0}^\infty L^m J^{\al_K m}\|\bm w(t)\|_{D(\BA^{\be+\ga})}\\
& \le L'\sum_{m=0}^\infty L^m J^{\al_K(m+1-\ga)}\|\bm F(t)\|_{D(\BA^\be)}\\
& =L'\sum_{m=0}^\infty\int_0^t\f{L^m\tau^{\al_K(m+1-\ga)-1}}{\Ga(\al_K(m+1-\ga))}\|\bm F(t-\tau)\|_{D(\BA^\be)}\,\rd\tau\\
& =L'\int_0^t\tau^{\al_K(1-\ga)-1}\sum_{m=0}^\infty\f{(L\tau^{\al_K})^m}{\Ga(\al_K m+\al_K(1-\ga))}\|\bm F(t-\tau)\|_{D(\BA^\be)}\,\rd\tau\\
& =L'\int_0^t\tau^{\al_K(1-\ga)-1}E_{\al_K,\al_K(1-\ga)}(L\tau^{\al_K})\|\bm F(t-\tau)\|_{D(\BA^\be)}\,\rd\tau\\
& \le L'E_{\al_K,\al_K(1-\ga)}(L T^{\al_K})\int_0^t\tau^{\al_K(1-\ga)-1}\|\bm F(t-\tau)\|_{D(\BA^\be)}\,\rd\tau\\
& =CJ^{\al_K(1-\ga)}\|\bm F(t)\|_{D(\BA^\be)},
\end{align*}
where
\[
C:=L'\Ga(\al_K(1-\ga))E_{\al_K,\al_K(1-\ga)}(L T^{\al_K}).
\]
This completes the proof of \eqref{eq-est-u}.
\end{proof}

Now we are well prepared to proceed to the proof of Theorem \ref{thm-Lip}.

To begin with, we first confirm that all terms on the right-hand side of \eqref{eq-est-rho} are well defined for a.e. $t\in(0,T)$. By the Sobolev embedding theorem and $\be>d/4$, we see that $\bm G\in D(\BA^\be)\subset C(\ov\Om)$ is defined pointwise. Next, we set
\[
\ve:=\f12\left(\be-\f d4\right)>0.
\]
By $\bm G\bm\rho\in L^\infty(0,T;D(\BA^\be))$, it follows from Lemma \ref{lem-well}(ii) that
\[
\bm u\in L^\infty(0,T;D(\BA^{d/4+1+\ve}))\subset L^\infty(0,T;C^2(\ov\Om)).
\]
Then the governing equation in \eqref{eq-IBVP-u0} holds for any $\bm x\in\ov\Om$ and a.e. $t\in(0,T)$. Especially, we have $\BA\bm u(\bm x_0,\,\cdot\,)\in L^\infty(0,T)$ and thus $\pa_t^{\bm\al}\bm u(\bm x_0,\,\cdot\,)\in L^\infty(0,T)$, i.e., the observation data is well-defined.

Now by $d/4+\ve=\be-\ve$, it follows from the Sobolev embedding theorem that
\[
|\BA\bm u(\bm x_0,t)|\le\|\BA\bm u(\,\cdot\,,t)\|_{C(\ov\Om)}\le C\|\BA\bm u(\,\cdot\,,t)\|_{D(\BA^{d/4+\ve})}=C\|\bm u(\,\cdot\,,t)\|_{D(\BA^{\be+1-\ve})}.
\]
Then we can choose $\ga=1-\ve$ and assume $\ga\ge0$ without loss of generality in Proposition \ref{prop-mildsol} to estimate
\[
|\BA\bm u(\bm x_0,t)|\le C J^{\al_K\ve}\|\bm G\bm\rho(t)\|_{D(\BA^\be)}\le C J^{\al_K\ve}|\bm\rho(t)|.
\]
Meanwhile, since $\bm C\bm u(\bm x_0,t)$ is the zeroth order term, it follows immediately from the above result that
\[
|\bm C\bm u(\bm x_0,t)|\le C|\bm u(\bm x_0,t)|\le C\|\bm u(t)\|_{D(\BA^{\be+1-\ve})}\le C J^{\al_K\ve}|\bm\rho(t)|.
\]
Hence, plugging the above two inequalities into \eqref{eq-est-rho} leads us to
\[
|\bm\rho(t)|\le C|\pa_t^{\bm\al}\bm u(\bm x_0,t)|+C J^{\al_K\ve}|\bm\rho(t)|.
\]
Finally, we take advantage of the general Gr\"onwall's inequality in Henry \cite[Lemma 7.1.1]{H81} to obtain
\[
|\bm\rho(t)|\le C|\pa_t^{\bm\al}\bm u(\bm x_0,t)|+C J^{\al_K\ve}|\pa_t^{\bm\al}\bm u(\bm x_0,t)|\le C\|\pa_t^{\bm\al}\bm u(\bm x_0,\,\cdot\,)\|_{L^\infty(0,t)}
\]
for a.e. $t\in(0,T)$, which consequently implies \eqref{eq-Lip}.


\subsection{Proof of Theorem \ref{thm-SPP}}

To investigate the strict positivity property of the homogeneous problem \eqref{eq-IBVP-v0}, we shall first prepare an auxiliary fractional Duhamel's principle for the scalar-valued problem
\begin{equation}\label{eq-IBVP-u3}
\begin{cases}
\pa_t^\al u+\cA u=F & \mbox{in }\Om\times(0,T),\\
u=0 & \mbox{on }\pa\Om\times(0,T).
\end{cases}
\end{equation}
Here $0<\al<1$ and $\cA:H^2(\Om)\cap H_0^1(\Om)\longrightarrow L^2(\Om)$ is an elliptic operator similarly to $\cA_k$ introduced previously but with a zeroth order term, i.e.,
\[
\cA\psi:=-\mathrm{div}(\bm A(\bm x)\nb\psi)+c(\bm x)\psi,
\]
where $0\le c\in L^\infty(0,T)$.

\begin{lem}\label{lem-Duhamel1}
Let $u$ be the solution to \eqref{eq-IBVP-u3} with $F\in L^\infty(0,T;L^2(\Om))$. Then $u$ satisfies
\begin{equation}\label{eq-rep-uv1}
J^{1-\al}u(t)=\int_0^t v(t;s)\,\rd s,
\end{equation}
where $v(t;s)$ solves the homogeneous problem
\begin{equation}\label{eq-IBVP-v1}
\begin{cases}
\pa_{s+}^\al(v-F(s))+\cA v=0 & \mbox{in }\Om\times(s,T),\\
v=0 & \mbox{on }\pa\Om\times(s,T)
\end{cases}
\end{equation}
with a parameter $s\in(0,T)$. Here $\pa_{s+}^\al$ is the inverse of a shifted Riemann-Liouville integral operator defined as
\[
J_{s+}^\al f(t):=\int_s^t\f{(t-\tau)^{\al-1}}{\Ga(\al)}f(\tau)\,\rd\tau.
\]
\end{lem}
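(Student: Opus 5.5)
Since $\cA$ is self-adjoint with eigensystem $\{(\la_n,\vp_n)\}$, $\la_n>0$ (thanks to $c\ge0$), I will verify \eqref{eq-rep-uv1} coefficientwise in the eigenbasis. The key facts are the explicit forms of both sides.

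First, I would expand both sides. For the solution of \eqref{eq-IBVP-u3} with zero initial value, the standard representation is
\[
u(t)=\sum_{n=1}^\infty\int_0^t (t-\tau)^{\al-1}E_{\al,\al}(-\la_n(t-\tau)^\al)F_n(\tau)\,\rd\tau\,\vp_n,\qquad F_n:=(F,\vp_n).
\]
For \eqref{eq-IBVP-v1}, the shifted problem is a time-translate of the homogeneous problem with initial value $F(s)$. Hence
\[
v(t;s)=\sum_{n=1}^\infty E_{\al,1}(-\la_n(t-s)^\al)F_n(s)\,\vp_n,\qquad t>s,
\]
which follows by the change of variable $t\mapsto t-s$, since $J^\al_{s+}$ commutes with the shift.

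Next, I would compute $J^{1-\al}$ of the $n$-th coefficient of $u$. By Fubini and the associativity of convolution,
\[
J^{1-\al}u_n=k_{1-\al}*\bigl(t^{\al-1}E_{\al,\al}(-\la_nt^\al)\bigr)*F_n.
\]
The key identity is
\[
k_{1-\al}*\bigl(t^{\al-1}E_{\al,\al}(-\la_nt^\al)\bigr)=E_{\al,1}(-\la_nt^\al),
\]
which I would verify termwise: $J^{1-\al}t^{\al m+\al-1}/\Ga(\al m+\al)=t^{\al m}/\Ga(\al m+1)$, and uniform convergence on compacts justifies the exchange of sum and integral. This gives $J^{1-\al}u_n(t)=\int_0^tE_{\al,1}(-\la_n(t-s)^\al)F_n(s)\,\rd s$, which is exactly the $n$-th coefficient of $\int_0^tv(t;s)\,\rd s$.

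Finally, I would justify summation over $n$ in $L^2(\Om)$. Using $|E_{\al,1}(-\la_nt^\al)|\le1$, we get $\|v(t;s)\|_{L^2}\le\|F(s)\|_{L^2}$, so $\int_0^tv(t;s)\,\rd s$ is a Bochner integral in $L^2(\Om)$ and its coefficients can be taken inside the integral. On the left, $u\in L^\infty(0,T;L^2(\Om))$ by Lemma \ref{lem-well}(ii) (scalar case, with $c$ absorbed into $\cA$), so $J^{1-\al}u$ is likewise well defined and commutes with taking coefficients. Since the coefficients agree for every $n$ and a.e.\ $t$, the identity \eqref{eq-rep-uv1} holds in $L^2(\Om)$ for a.e.\ $t$.

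The main obstacle is not the algebra but making precise what "the solution" of \eqref{eq-IBVP-v1} means for merely $L^2$ data $F(s)$, defined for a.e.\ $s$. I would take the mild/eigenfunction solution, which coincides with the solution in the sense of \cite{KRY20} translated by $s$. I would also check measurability of $s\mapsto v(\cdot;s)$, which follows from the measurability of $s\mapsto F_n(s)$ and the continuity of $E_{\al,1}$.
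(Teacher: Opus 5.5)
Your proposal is correct and follows essentially the same route as the paper. Both arguments expand $u$ and $v(\,\cdot\,;s)$ in the eigenbasis of $\cA$, verify termwise that $k_{1-\al}*\bigl(t^{\al-1}E_{\al,\al}(-\la_nt^\al)\bigr)=E_{\al,1}(-\la_nt^\al)$ (the paper does this via the Beta function), and reassemble the series, while your extra justifications (the bound $\|v(t;s)\|_{L^2(\Om)}\le\|F(s)\|_{L^2(\Om)}$ for the Bochner integral, the coefficientwise comparison, and measurability in $s$) supply details the paper leaves implicit.
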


\begin{proof}
As special cases of Lemma \ref{lem-well}, we know $u$ and $v(\,\cdot\,;s)$ ($0<s<T$) belong to $L^\infty(0,T;L^2(\Om))$. As before, we introduce the eigensystem $\{(\la_n,\vp_n)\}$ of $\cA$, which shares identically the same properties as those of $\cA_k$ owing to the non-negativity of the zeroth order coefficient $c$. Then according to \cite{SY11}, the explicit solutions to \eqref{eq-IBVP-v1} and \eqref{eq-IBVP-u3} read
\begin{align*}
v(t;s) & =\sum_{n=1}^\infty E_{\al,1}(-\la_n(t-s)^\al)(F(s),\vp_n)\vp_n,\\
u(t) & =\sum_{n=1}^\infty\left(\int_0^t s^{\al-1}E_{\al,\al}(-\la_n s^\al)(F(t-s),\vp_n)\,\rd s\right)\vp_n.
\end{align*}
Writing $f_n(t):=(F(t),\vp_n)$ and $h_n(t):=t^{\al-1}E_{\al,\al}(-\la_n t^\al)$, we can express
\[
v(t;s)=\sum_{n=1}^\infty E_{\al,1}(-\la_n(t-s)^\al)f_n(s)\vp_n,\quad u(t)=\sum_{n=1}^\infty(h_n*f_n)(t)\vp_n.
\]
Recalling the convolution representation of $J^\be$ in \eqref{eq-def-RL}, we have
\[
J^{1-\al}u(t)=(k_{1-\al}*u)(t)=\sum_{n=1}^\infty(k_{1-\al}*h_n*f_n)(t)\vp_n.
\]
Here we calculate
\begin{align*}
(k_{1-\al}*h_n)(t) & =\int_0^t\f{(t-s)^{-\al}}{\Ga(1-\al)}s^{\al-1}\sum_{m=0}^\infty\f{(-\la_n s^\al)^m}{\Ga(\al m+\al)}\,\rd s\\
& =\f1{\Ga(1-\al)}\sum_{m=0}^\infty\f{(-\la_n)^m}{\Ga(\al m+\al)}\int_0^t(t-s)^{-\al}s^{\al(m+1)-1}\,\rd s\\
& =\f1{\Ga(1-\al)}\sum_{m=0}^\infty\f{(-\la_n)^m}{\Ga(\al m+\al)}t^{\al m}B(1-\al,\al m+\al)\\
& =\sum_{m=0}^\infty\f{(-\la_n t^\al)^m}{\Ga(\al m+1)}=E_{\al,1}(-\la_n t^\al).
\end{align*}
Therefore, it turns out that
\begin{align*}
J^{1-\al}u(t) & =\sum_{n=1}^\infty(E_{\al,1}(-\la_n t^\al)*f_n)(t)\vp_n\\
& =\int_0^t\sum_{n=1}^\infty E_{\al,1}(-\la_n(t-s)^\al)f_n(s)\vp_n\,\rd s=\int_0^t v(t;s)\,\rd s,
\end{align*}
which is exactly \eqref{eq-rep-uv1} and thus verifies Lemma \ref{lem-Duhamel1}.
\end{proof}

As a quick consequence, we have the following corollary.

\begin{coro}\label{coro-SPP0}
Under the same setting of Lemma \ref{lem-Duhamel1}, further assume that
\begin{equation}\label{eq-pos-F}
F(s)\ge0,\not\equiv0\quad\mbox{in }\Om,\ \mbox{a.e. }s\in(0,T).
\end{equation}
Then $J^{1-\al}u>0$ a.e. in $\Om\times(0,T)$.
\end{coro}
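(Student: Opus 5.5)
The plan is to read off the claim from the Duhamel representation in Lemma \ref{lem-Duhamel1}, namely
\[
J^{1-\al}u(t)=\int_0^t v(t;s)\,\rd s,
\]
together with the known strong positivity of homogeneous single subdiffusion equations from \cite{LRY16}.

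First, fix $s\in(0,T)$ such that $F(s)\ge0,\not\equiv0$ in $\Om$; by \eqref{eq-pos-F} this holds for a.e.\ $s$. Then $v(\,\cdot\,;s)$ solves the homogeneous problem \eqref{eq-IBVP-v1} on $(s,T)$ with initial value $F(s)$. Since $\pa_{s+}^\al$ is just $\pa_t^\al$ after the time shift $t\mapsto t-s$, the function $\wt v(t):=v(t+s;s)$ solves the standard homogeneous problem with initial value $F(s)$ on $(0,T-s)$. The strong maximum principle of \cite{LRY16} then gives $v(\,\cdot\,;s)>0$ a.e.\ in $\Om\times(s,T)$.

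One point needs care here. The operator $\cA$ now carries a zeroth order term $c\ge0$, whereas \cite{LRY16} is usually stated for $c\ge0$ or $c\equiv0$. In the eigenfunction form
\[
v(t;s)=\sum_n E_{\al,1}(-\la_n(t-s)^\al)(F(s),\vp_n)\vp_n,
\]
the positivity argument of \cite{LRY16} only uses $\la_n>0$ and the complete monotonicity of $E_{\al,1}(-\,\cdot\,)$. If needed, I would invoke it in the form where $v(t;s)$ is expressed through the positive semigroup $\e^{-r\cA}$ via the subordination representation
\[
E_{\al,1}(-\la\tau^\al)=\int_0^\infty \Phi_\al(r)\,\e^{-\la r\tau^\al}\,\rd r,\quad \Phi_\al>0.
\]
Then $v=\int_0^\infty\Phi_\al(r)\,\e^{-r\tau^\al\cA}F(s)\,\rd r$, and strict positivity follows from the strong maximum principle for the parabolic semigroup $\e^{-t\cA}$ applied to a nonnegative, nonvanishing datum.

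Second, I integrate in $s$. The set
\[
\{(\bm x,t,s):\ 0<s<t<T,\ v(\bm x,t;s)\le0\}
\]
has measure zero by Fubini. Hence, for a.e.\ $(\bm x,t)$, the integrand $v(\bm x,t;s)$ is positive for a.e.\ $s\in(0,t)$. Since $(0,t)$ has positive length, $\int_0^t v(\bm x,t;s)\,\rd s>0$. By \eqref{eq-rep-uv1} this gives $J^{1-\al}u>0$ a.e.\ in $\Om\times(0,T)$.

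The main obstacle is the measurability and Fubini step, i.e.\ making sure $v(\bm x,t;s)$ is jointly measurable in $(\bm x,t,s)$ and that \eqref{eq-rep-uv1} holds pointwise a.e. I would handle this by the series representation: each term is jointly measurable and the series converges in $L^2$ for each $(t,s)$, uniformly bounded by $\|F\|_{L^\infty(0,T;L^2(\Om))}$. The positivity of the homogeneous problem is quoted from \cite{LRY16} rather than reproved.
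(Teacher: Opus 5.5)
Your proposal is correct and follows the paper's proof: use the Duhamel representation of Lemma~\ref{lem-Duhamel1}, apply the strong maximum principle of \cite{LRY16} to get $v(\,\cdot\,;s)>0$ a.e.\ in $\Om\times(s,T)$, and integrate in $s$. Your time-shift, Fubini and measurability bookkeeping is extra rigor that the paper omits. One correction: strict positivity does not follow from $\la_n>0$ and the complete monotonicity of $E_{\al,1}(-\,\cdot\,)$ alone. What does the work is the positivity-improving property of $\e^{-r\cA}$, which is exactly what your subordination fallback uses.
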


\begin{proof}
By Lemma \ref{lem-Duhamel1}, $u$ satisfies \eqref{eq-rep-uv1} with $v(t;s)$ satisfying \eqref{eq-IBVP-v1}. Thanks to the positivity \eqref{eq-pos-F}, the strong maximum principle (e.g.\! \cite[Theorem 1]{LRY16}) guarantees
\[
v(\,\cdot\,;s)>0\quad\mbox{a.e. in }\Om\times(s,T),\ \forall\,s\in(0,T),
\]
which directly implies $J^{1-\al}u>0$ a.e. in $\Om\times(0,T)$.
\end{proof}

Next, as an intermediate step before proceeding to the proof of Theorem \ref{thm-SPP}, we prepare the weak maximum principle or equivalently the comparison principle for the coupled system \eqref{eq-IBVP-u2}.

\begin{prop}[Weak maximum principle]\label{prop-WMP}
Let $\bm u$ be the mild solution to $\eqref{eq-IBVP-u2},$ where
\[
\bm0\le\bm u_0\in L^2(\Om),\quad\bm0\le\bm F\in L^\infty(0,T;L^2(\Om))
\]
and $\bm C\in L^\infty(\Om)$ satisfy \eqref{eq-cond-C}. Then $\bm u\ge\bm0$ in $\Om\times(0,T)$.
\end{prop}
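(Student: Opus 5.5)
Since $\bm C\in L^\infty(\Om)$ only, the plan is to decouple the system with a diagonal shift and then run a monotone Picard-type iteration. Let $c_0:=\max_k\|c_{kk}\|_{L^\infty(\Om)}$ and split $\bm C=\bm D-\bm N$, where $\bm D:=\diag(c_{11}+c_0,\dots,c_{KK}+c_0)\ge\bm O$ and $\bm N:=c_0\bm I-\bm C+\bm D-\bm C$. Entrywise this gives $N_{kk}=c_0$ and $N_{k\ell}=-c_{k\ell}\ge0$ for $k\ne\ell$ by \eqref{eq-cond-C}, so $\bm N\ge\bm O$. The equation becomes
\[
\pa_t^{\bm\al}(\bm u-\bm u_0)+(\BA+\bm D)\bm u=\bm N\bm u+\bm F .
\]
Each operator $\cA_k+(c_{kk}+c_0)$ has a non-negative zeroth order coefficient. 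Denote its resolvent operators by $\wt\cS_k,\wt\cS_k'$ (defined via its own eigensystem, as in Lemma \ref{lem-Duhamel1}). Then $\bm u$ is a mild solution of the original problem if and only if it solves
\[
\bm u=\wt{\bm w}+\wt\BQ\bm u,\qquad \wt\BQ\bm y(t):=-\int_0^t\wt\BA^{-1}\wt\BS'(t-\tau)\bm N\bm y(\tau)\,\rd\tau,
\]
with $\wt{\bm w}$ built from $\bm u_0$ and $\bm F$ as in \eqref{eq-def-w}. I would justify this equivalence by uniqueness of mild solutions, interpreting both formulations as the same weak/mild problem; the estimate argument of Lemma \ref{lem-well} for $L^\infty$ coefficients applies unchanged.

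Next comes positivity of the scalar building blocks. For each $k$, the operators $\wt\cS_k(t)$ and $-\wt\cA_k^{-1}\wt\cS_k'(t)$ should map non-negative functions to non-negative functions. The first is the solution operator of the homogeneous scalar problem, and the second is the solution operator with source $F$ and zero initial value. Both are covered by the scalar weak maximum principle (Luchko \cite{L09}, \cite{LRY16}) or by the non-negativity result in \cite{LY25}. Hence $\bm u_0\ge\bm0$ and $\bm F\ge\bm0$ give $\wt{\bm w}\ge\bm0$, and $\bm y\ge\bm0$ gives $\wt\BQ\bm y\ge\bm0$ because $\bm N\ge\bm O$.

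Finally, I would iterate. Set $\bm u^{(0)}:=\wt{\bm w}$ and $\bm u^{(m+1)}:=\wt{\bm w}+\wt\BQ\bm u^{(m)}$, so that $\bm u^{(m)}=\sum_{j=0}^m\wt\BQ^j\wt{\bm w}$. Every term is non-negative, so the sequence is non-negative and monotone. The estimate of Lemma \ref{lem-est-Q} with $\be=0$ needs only $\bm C\in L^\infty$ when $\be=0$, so it applies to $\wt\BQ$, and summing via the Mittag-Leffler bound gives convergence in $L^\infty(0,T;L^2(\Om))$, exactly as in Proposition \ref{prop-mildsol}. The limit solves the shifted integral equation, so by uniqueness it equals $\bm u$. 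Since $L^2$-limits of non-negative functions are non-negative a.e., we get $\bm u\ge\bm0$. The main obstacle is the first step, namely the rigorous equivalence of the shifted and unshifted mild formulations when $\bm C$ is merely bounded, together with the scalar positivity of $-\wt\cA_k^{-1}\wt\cS_k'(t)$. For the latter I would cite \cite{LY25} directly, or argue via Lemma \ref{lem-Duhamel1} and the positivity of $v(t;s)$.
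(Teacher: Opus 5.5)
Your proposal is correct and is essentially the paper's proof. The paper likewise splits $\bm C=\bm\Si-\bm B$ into a non-negative diagonal shift absorbed into $\wt\BA$ and a cooperative remainder $\bm B\ge\bm O$ (your $\bm N$, whose displayed formula should just read $\bm N:=\bm D-\bm C$, matching the entries you list) moved to the right-hand side. It then runs the same decoupled Picard iteration, using the scalar weak maximum principle for non-negative increments and an \cite{LHL23}-type argument for convergence to $\bm u$.

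The remaining differences (the constant shift $\|c_{kk}\|_{L^\infty(\Om)}$ versus your $c_{kk}+c_0$, difference equations versus a Neumann series) are cosmetic. Do invoke the scalar weak maximum principle itself rather than your fallback via Lemma \ref{lem-Duhamel1}, which only gives $J^{1-\al_k}(\wt\BQ\bm y)_k\ge0$ and not $(\wt\BQ\bm y)_k\ge0$.
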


The above proposition generalizes a similar result in Luchko and Yamamoto \cite[Theorem 4]{LY25} which assumed
\[
\cA_1=\cdots=\cA_K=-\triangle=-\sum_{j=1}^d\pa_{x_j}^2.
\]
Although the argument is analogous, we still provide a proof here for completeness.

\begin{proof}[Proof of Proposition \ref{prop-WMP}]
According to Lemma \ref{lem-well}, problem \eqref{eq-IBVP-u2} owns a unique mild solution $\bm u\in L^\infty(0,T;$ $L^2(\Om))$, which can be approximated by a Picard iteration based on the integral equation \eqref{eq-mildsol}. Here we attempt to construct a sequence $\{\bm u^{(m)}\}_{m=0}^\infty$ convergent to $\bm u$ in a slightly different manner by decomposing the coefficient matrix $\bm C$ as $\bm C=\bm\Si-\bm B$ with
\[
\bm\Si:=\diag(\|c_{11}\|_{L^\infty(\Om)},\dots,\|c_{KK}\|_{L^\infty(\Om)}),\quad\bm B:=\bm\Si-\bm C.
\]
Then obviously $\bm\Si\ge\bm O$ (see Definition \ref{def-pos}) and owing to the key positivity assumption \eqref{eq-cond-C}, we have $\bm B\ge\bm O$. Further introducing
\[
\wt\cA_k:=\cA_k+\|c_{kk}\|_{L^\infty(\Om)},\ k=1,\dots,K,\quad\wt\BA:=\BA+\bm\Si=\diag(\wt\cA_1,\dots,\wt\cA_K),
\]
we can rewrite \eqref{eq-IBVP-u2} as
\[
\begin{cases}
\pa_t^{\bm\al}(\bm u-\bm u_0)+\wt\BA\bm u=\bm B\bm u+\bm F & \mbox{in }\Om\times(0,T),\\
\bm u=\bm0 & \mbox{on }\pa\Om\times(0,T).
\end{cases}
\]
Here the elliptic operators $\wt\cA_k$ and thus $\wt\BA$ share the same spectral property as that of $\cA_k,\BA$ defined previously due to the non-negativity of $\bm\Si$. Regarding $\bm B\bm u+\bm F$ on the right-hand side above as a new source term, we construct a sequence $\{\bm u^{(m)}\}$ iteratively by $\bm u^{(0)}=\bm0$ and $\bm u^{(m)}$ is the solution to the decoupled system
\begin{equation}\label{eq-IBVP-um}
\begin{cases}
\pa_t^{\bm\al}(\bm u^{(m)}-\bm u_0)+\wt\BA\bm u^{(m)}=\bm B\bm u^{(m-1)}+\bm F & \mbox{in }\Om\times(0,T),\\
\bm u^{(m)}=\bm0 & \mbox{on }\pa\Om\times(0,T).
\end{cases}
\end{equation}
for $m\in\BN$. Obviously, such a construction is equivalent to the recurrence formula in \cite[(3.4)]{LHL23} by a simple amendment of the resolvent operator $\BS$ based on $\wt\BA$, but we prefer the above expression as it is more suitable for discussing the positivity. Repeating the same argument as that in \cite{LHL23}, one can easily confirm that
\[
\bm u^{(m)}\longrightarrow\bm u\quad\mbox{in }L^\infty(0,T;L^2(\Om))\mbox{ as }m\to\infty.
\]

Now it suffices to show that $\{\bm u^{(m)}\}$ is a non-decreasing sequence, that is, $\bm u^{(m)}\ge\bm u^{(m-1)}$ in $\Om\times(0,T)$ for any $m\in\BN$. For $m=1$, it is readily seen that $\bm u^{(1)}$ solves
\[
\begin{cases}
\pa_t^{\bm\al}(\bm u^{(1)}-\bm u_0)+\wt\BA\bm u^{(1)}=\bm F & \mbox{in }\Om\times(0,T),\\
\bm u^{(1)}=\bm0 & \mbox{on }\pa\Om\times(0,T).
\end{cases}
\]
By $\bm u_0\ge\bm0$ and $\bm F\ge\bm0$, it follows directly from the weak maximum principle for scalar-valued subdiffusion equations (e.g., Luchko and Yamamoto \cite{LY19}) that $\bm u^{(1)}-\bm u^{(0)}=\bm u^{(1)}\ge\bm0$.

Now we assume $\bm u^{(m)}\ge\bm u^{(m-1)}$ in $\Om\times(0,T)$ for some $m\in\BN$. Taking difference between \eqref{eq-IBVP-um} with $m+1$ and $m$, we readily see that $\bm u^{(m+1)}-\bm u^{(m)}$ satisfies
\[
\begin{cases}
(\pa_t^{\bm\al}+\wt\BA)(\bm u^{(m+1)}-\bm u^{(m)})=\bm B(\bm u^{(m)}-\bm u^{(m-1)}) & \mbox{in }\Om\times(0,T),\\
\bm u^{(m+1)}-\bm u^{(m)}=\bm0 & \mbox{on }\pa\Om\times(0,T).
\end{cases}
\]
Then the non-negativity of $\bm B$ and $\bm u^{(m)}-\bm u^{(m-1)}$ indicates $\bm B(\bm u^{(m)}-\bm u^{(m-1)})\ge\bm0$ and thus $\bm u^{(m+1)}-\bm u^{(m)}\ge\bm0$ in $\Om\times(0,T)$ again by the scalar-valued weak maximum principle. Then we verified the monotone convergence of $\{\bm u^{(m)}\}$, and especially the limit $\bm u\ge\bm u^{(0)}=\bm0$ as desired.
\end{proof}

Now we are in a position to demonstrate Theorem \ref{thm-SPP}.

Again by Lemma \ref{lem-well}(i), problem \eqref{eq-IBVP-v0} owns a unique mild solution $\bm v\in L^\infty(0,T;L^2(\Om))$. Following the construction in Proposition \ref{prop-WMP}, we can approximate $\bm v$ via a sequence $\{\bm v^{(m)}\}$ defined by $\bm v^{(0)}=\bm0$ and $\bm v^{(m)}$ being the solution to
\[
\begin{cases}
\pa_t^{\bm\al}(\bm v^{(m)}-\bm g)+\wt\BA\bm v^{(m)}=\bm B\bm v^{(m-1)} & \mbox{in }\Om\times(0,T),\\
\bm v^{(m)}=\bm0 & \mbox{on }\pa\Om\times(0,T).
\end{cases}
\]
Since $\bm g\ge\bm0$, it follows from Proposition \ref{prop-WMP} as well as its proof that $\bm v\ge\bm0$ and $\{\bm v^{(m)}\}$ is a non-decreasing sequence. By recalling the integer $M$ in the key assumption \eqref{eq-cond-rq}, our target is to show
\begin{equation}\label{eq-pos-vM}
J^{M(1-\al_K)}\bm v^{(M+1)}>\bm0\quad\mbox{a.e. in }\Om\times(0,T).
\end{equation}
In fact, if \eqref{eq-pos-vM} holds true, then we can employ $\bm v\ge\bm v^{(M+1)}\ge\bm0$ to obtain immediately
\[
J^{M(1-\al_K)}\bm v\ge J^{M(1-\al_K)}\bm v^{(M+1)}>\bm0\quad\mbox{ a.e. in }\Om\times(0,T).
\]

To this end, we turn to the definition \eqref{eq-def-rq} of $\bm r$ and $\bm Q$, which first gives $\bm Q^m\bm r\ge\bm0$ for any $m=0,1,\dots$. Defining the index sets
\[
I_m:=\{k=1,\dots,K\mid(\bm Q^m\bm r)_k\ne0\},\quad m=0,1,\dots,
\]
we see from the key assumption \eqref{eq-cond-rq} that
\[
I_0\subset I_1\subset\cdots\subset I_M=\{1,\dots,K\}.
\]
In the sequel, we claim by induction that
\begin{equation}\label{eq-claim}
\forall\,m=0,1,\dots,M,\ \forall\,k\in I_m,\quad J^{m(1-\al_K)}v_k^{(m+1)}>0\mbox{ a.e. in }\Om\times(0,T).
\end{equation}

For $m=0$, by definition we know $k\in I_0$ if and only if $g_k\ge0,\not\equiv0$ in $\Om$. Since $v_k^{(1)}$ satisfies
\[
\begin{cases}
\pa_t^{\al_k}(v_k^{(1)}-g_k)+\wt\cA_k v_k^{(1)}=0 & \mbox{in }\Om\times(0,T),\\
v_k^{(1)}=0 & \mbox{on }\pa\Om\times(0,T),
\end{cases}
\]
the strong maximum principle for scalar subdiffusion equation (e.g.\! \cite{LRY16}) asserts $v_k^{(1)}>0$ a.e. in $\Om\times(0,T)$ for any $k\in I_0$.

Now we assume that \eqref{eq-claim} holds true for some $m-1=0,\dots,M-1$, that is,
\[
\forall\,k\in I_{m-1},\quad J^{(m-1)(1-\al_K)}v_k^{(m)}>0\mbox{ a.e. in }\Om\times(0,T).
\]
Then the monotonicity of $\{v_k^{(m)}\}$ and the associative law of Riemann-Liouville integral operators yield
\begin{align}
J^{m(1-\al_K)}v_k^{(m+1)} &\ge J^{m(1-\al_K)}v_k^{(m)}=J^{1-\al_K}\left(J^{(m-1)(1-\al_K)}v_k^{(m)}\right)\nonumber\\
& >0\quad\mbox{a.e. in }\Om\times(0,T),\ \forall\,k\in I_{m-1}.\label{eq-pos}
\end{align}
Suppose $I_m\setminus I_{m-1}\ne\emptyset$ without loss of generality and fix any $k\in I_m\setminus I_{m-1}$. By definition, it can be inferred that
\begin{equation}\label{eq-pos-B}
\left(\bm B J^{(m-1)(1-\al_K)}\bm v^{(m)}\right)_k\ge0,\not\equiv0\mbox{ in }\Om\times(0,T),\quad g_k\equiv0\mbox{ in }\Om.
\end{equation}
Then $v_k^{(m+1)}$ satisfies the initial-boundary value problem
\[
\begin{cases}
(\pa_t^{\al_k}+\wt\cA_k)v_k^{(m+1)}=(\bm B\bm v^{(m)})_k & \mbox{in }\Om\times(0,T),\\
v_k^{(m+1)}=0 & \mbox{on }\pa\Om\times(0,T).
\end{cases}
\]
By taking $J^{(m-1)(1-\al_K)}$ on both sides of the governing equation above, it reveals that $J^{(m-1)(1-\al_K)}v_k^{(m+1)}$ satisfies
\[
\begin{cases}
(\pa_t^{\al_k}+\wt\cA_k)\left(J^{(m-1)(1-\al_K)}v_k^{(m+1)}\right)\\
\quad=\left(\bm B J^{(m-1)(1-\al_K)}\bm v^{(m)}\right)_k & \mbox{in }\Om\times(0,T),\\
v_k^{(m+1)}=0 & \mbox{on }\pa\Om\times(0,T),
\end{cases}
\]
where $\pa_t^{\al_k}$ and $J^{(m-1)(1-\al_K)}$ commute due to $g_k=0$ in $\Om$. Now that the right-hand side above satisfies \eqref{eq-pos-B}, we take advantage of Corollary \ref{coro-SPP0} to conclude
\[
J^{1-\al_k}\left(J^{(m-1)(1-\al_K)}v_k^{(m+1)}\right)>0\quad\mbox{a.e. in }\Om\times(0,T)
\]
and hence
\begin{align*}
J^{m(1-\al_K)}v_k^{(m+1)} & =J^{\al_k-\al_K}\left(J^{1-\al_k}\left(J^{(m-1)(1-\al_K)}v_k^{(m+1)}\right)\right)\\
& >0\quad\mbox{a.e. in }\Om\times(0,T),\ \forall\,k\in I_m\setminus I_{m-1}.
\end{align*}
Combining this with \eqref{eq-pos}, we arrive at the claim \eqref{eq-claim} for $m$. Finally, taking $m=M$ in \eqref{eq-claim} yields
\[
J^{M(1-\al_K)}v_k^{(M+1)}>0\quad\mbox{a.e. in }\Om\times(0,T),\ \forall\,k\in I_M=\{1,\dots,K\}
\]
or equivalently \eqref{eq-pos-vM}, which completes the proof of Theorem \ref{thm-SPP}.


\subsection{Proof of Theorem \ref{thm-ISP}}

This subsection is devoted to the proof of the unique determination of $\bm\rho(t)$ in the source term of \eqref{eq-IBVP-u1} by observing a single component $u_k(\bm x_0,\,\cdot\,)$ for any $k=1,\dots,K$. As was discussed in Section \ref{sec-main}, the proof relies on the strict positivity property and the fractional Duhamel's principle in the case of a single equation. In the coupled scenario, the former was established in the previous section and we shall fill the missing piece for the latter. For single time-fractional evolution equations, there are several publications on Duhamel's principle such as \cite{US06,JLLY17,HLY20} and see Umarov \cite{U19} for a comprehensive survey. Motivated by a very recent preprint \cite{U26} for fully coupled systems of fractional order, we propose the following Duhamel's principle for coupled subdiffusion equations.

\begin{lem}\label{lem-Duhamel2}
Let $\bm u$ be the mild solution to
\begin{equation}\label{eq-IBVP-u4}
\begin{cases}
(\pa_t^{\bm\al}+\BA+\bm C)\bm u=\bm F & \mbox{in }\Om\times(0,T),\\
\bm u=\bm0 & \mbox{on }\pa\Om\times(0,T),
\end{cases}
\end{equation}
where $\bm C\in L^\infty(\Om)$ and $\bm F\in W^{1,\infty}(0,T;L^2(\Om))$. Then $\bm u$ allows the representation
\begin{equation}\label{eq-rep-uv2}
\bm u(t)=\int_0^t\bm v(t;s)\,\rd s,
\end{equation}
where $\bm v(t;s)$ solves the homogeneous problem
\begin{equation}\label{eq-IBVP-v2}
\begin{cases}
\pa_{s+}^{\bm\al}(\bm v(\,\cdot\,;s)-D_{0+}^{1-\bm\al}\bm F(s))+(\BA+\bm C)\bm v(\,\cdot\,;s)=\bm0 & \mbox{in }\Om\times(s,T),\\
\bm v(\,\cdot\,;s)=\bm0 & \mbox{on }\pa\Om\times(s,T)
\end{cases}
\end{equation}
where a parameter $s\in(0,T)$. Here
\[
\pa_{s+}^{\bm\al}=\diag(\pa_{s+}^{\al_1},\dots,\pa_{s+}^{\al_K}),\quad D_{0+}^{1-\bm\al}=\diag(D_{0+}^{1-\al_1},\dots,D_{0+}^{1-\al_K}),
\]
where $\pa_{s+}^{\al_k}$ was defined in Lemma $\ref{lem-Duhamel1}$ and $D_{0+}^{1-\al_k}:=\pa_t\circ J_{0+}^{\al_k}$ denotes the Riemann-Liouville derivative.
\end{lem}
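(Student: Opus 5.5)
The plan is to verify directly that the function $\bm w(t):=\int_0^t\bm v(t;s)\,\rd s$ satisfies the same Volterra-type integral equation as $\bm u$, and then to invoke uniqueness. The formal starting point is that, for zero initial data, \eqref{eq-IBVP-u4} is equivalent to the componentwise integral identity
\[
\bm u=J^{\bm\al}\bm F-J^{\bm\al}(\BA+\bm C)\bm u,\qquad J^{\bm\al}:=\diag(J^{\al_1},\dots,J^{\al_K}),
\]
with $J^{\bm\al}$ applied in the weak sense. Likewise, for each fixed $s$, \eqref{eq-IBVP-v2} is equivalent to
\[
\bm v(t;s)=\bm\phi(s)-J_{s+}^{\bm\al}\big((\BA+\bm C)\bm v(\,\cdot\,;s)\big)(t),\quad t>s,\qquad\bm\phi:=D_{0+}^{1-\bm\al}\bm F.
\]
I would obtain both identities by applying $J^{\al_k}$ (respectively $J^{\al_k}_{s+}$) to each component. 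This is legitimate because $\pa_t^{\al_k}$ is by definition the inverse of $J^{\al_k}$ on $H_{\al_k}(0,T)$, and the shifted version behaves in the same way.

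Integrating the second identity in $s$ over $(0,t)$ gives
\[
\bm w(t)=\int_0^t\bm\phi(s)\,\rd s-\int_0^t\!\!\int_s^t k_{\bm\al}(t-\tau)\odot(\BA+\bm C)\bm v(\tau;s)\,\rd\tau\,\rd s .
\]
Exchanging the order of integration in the double integral (Fubini on $\{0<s<\tau<t\}$) turns it into $J^{\bm\al}\big((\BA+\bm C)\bm w\big)(t)$. For the first term, $\int_0^t\bm\phi=J^1D_{0+}^{1-\bm\al}\bm F=J^1\pa_t J^{\bm\al}\bm F=J^{\bm\al}\bm F$, which uses $(J^{\bm\al}\bm F)(0)=\bm0$. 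To justify this I would use $\bm F\in W^{1,\infty}$ to write
\[
D_{0+}^{1-\al_k}F_k(s)=k_{1-\al_k}(s)F_k(0)+J^{1-\al_k}F_k'(s).
\]
This shows $\bm\phi\in L^p(0,T;L^2(\Om))$ for some $p\in(1,1/(1-\al_1))$. Hence $\bm w$ satisfies exactly the same integral equation as $\bm u$.

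To conclude $\bm w=\bm u$, I would note that the difference $\bm z:=\bm u-\bm w$ solves the homogeneous equation $\bm z=-J^{\bm\al}(\BA+\bm C)\bm z$, i.e. \eqref{eq-IBVP-u4} with $\bm F=\bm0$. Its only mild solution is zero by Lemma \ref{lem-well}. Equivalently, one can translate back to the form \eqref{eq-mildsol} and use the contraction/Gr\"onwall estimate of Lemma \ref{lem-est-Q}: $\|\BQ^m\bm z\|\le (LT^{\al_K})^m/\Ga(\al_Km+1)\cdot\|\bm z\|\to0$.

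The main obstacle is rigor in the middle step, namely the Fubini exchange and commuting the unbounded operator $\BA$ with the $s$-integral, given that $\bm v(\,\cdot\,;s)$ has only the regularity of Lemma \ref{lem-well}(i) with the singular weight $k_{1-\al_k}(s)$ hidden in the initial datum. My first attempt would be to avoid $\BA$ entirely by testing against eigenfunctions, or better, by working in $D(\BA^{-1})$ with the mild formulations. Concretely, I would write $\bm v(t;s)=\BS(t-s)\bm\phi(s)+\BQ_s\bm v(\,\cdot\,;s)(t)$ in the resolvent form and use the bound $\|\bm v(t;s)\|_{L^2}\le C\|\bm\phi(s)\|_{L^2}$, which is integrable in $s$. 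Then every double integral is absolutely convergent in $L^2$, and one verifies that $\bm w$ satisfies \eqref{eq-mildsol}. The remaining scalar identity needed there is the same Mittag-Leffler convolution computation as in the proof of Lemma \ref{lem-Duhamel1}, now in the form $\cS_k(\,\cdot\,)*D^{1-\al_k}F_k=-\cA_k^{-1}\cS'_k*F_k$, i.e. $E_{\al,1}(-\la t^\al)*\pa_tJ^{\al}f=h*f$ with $h(t)=t^{\al-1}E_{\al,\al}(-\la t^\al)$. Once that identity is checked, the coupling terms match by Fubini on bounded operators.
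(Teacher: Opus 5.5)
Your proof is correct, but it runs in the opposite direction from the paper's. The paper works in differential (Caputo) form. It rewrites both problems with Caputo derivatives and differentiates $\bm u(t)=\int_0^t\bm v(t;s)\,\rd s$ in $t$, using the trace $\bm v(t;t)=D_{0+}^{1-\bm\al}\bm F(t)$. It then applies $J_{0+}^{1-\bm\al}$ together with $J_{0+}^{1-\al_k}D_{0+}^{1-\al_k}F_k=F_k$, and swaps the integrals on the triangle to obtain $\int_0^t\rd_{s+}^{\bm\al}\bm v(t;s)\,\rd s$. Finally it inserts the equation for $\bm v$ and pulls $\BA+\bm C$ out of the $s$-integral; uniqueness enters only tacitly.

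You instead apply $J^{\bm\al}$ (resp.\ $J_{s+}^{\bm\al}$) and compare integral equations. Your identity $\int_0^tD_{0+}^{1-\bm\al}\bm F(s)\,\rd s=J^{\bm\al}\bm F(t)$ plays the role of the paper's $J^{1-\bm\al}D^{1-\bm\al}_{0+}\bm F=\bm F$; both rest on $(J^{\bm\al}\bm F)(0)=\bm0$. You then close with an explicit uniqueness step $\bm z=\BQ\bm z\Rightarrow\bm z=\bm0$ via Lemma~\ref{lem-est-Q}.

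Your resolvent version is the more rigorous of the two within the paper's own framework:
it never differentiates $\bm v(t;s)$ in $t$ or exchanges $\pa_t$ with the parameter integral;
it does not need the equivalence between mild and strong Caputo solutions;
and it only needs $\bm\phi\in L^1(0,T;L^2(\Om))$ and $\|\bm v(t;s)\|_{L^2(\Om)}\le C\|\bm\phi(s)\|_{L^2(\Om)}$ to make every Fubini exchange absolutely convergent.
After that, the coupling terms match as $\int_0^t\BQ_s\bm v(\,\cdot\,;s)(t)\,\rd s=\BQ\bm w(t)$.

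The price is the scalar identity $E_{\al,1}(-\la t^\al)*\pa_tJ^\al f=h*f$. This does follow from $k_{1-\al}*h=E_{\al,1}(-\la t^\al)$ (computed in the proof of Lemma~\ref{lem-Duhamel1}) together with $k_{1-\al}*\pa_tJ^\al f=f$. The paper's computation is shorter and mirrors the classical Duhamel verification, but it is formal precisely at the points you flagged.

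One small correction. With the paper's convention $D_{0+}^{1-\al_k}=\pa_t\circ J_{0+}^{\al_k}$, one has $D_{0+}^{1-\al_k}F_k=k_{\al_k}F_k(0)+J^{\al_k}F_k'$. The formula you wrote, with $1-\al_k$ in place of $\al_k$, is that of $\pa_tJ^{1-\al_k}F_k$. Consequently $\|\bm\phi(s)\|_{L^2(\Om)}\le Cs^{\al_K-1}$, so $\bm\phi\in L^p(0,T;L^2(\Om))$ for $p<1/(1-\al_K)$ rather than $p<1/(1-\al_1)$; the smallest order gives the worst singularity. Since your argument only uses $\bm\phi\in L^1(0,T;L^2(\Om))$, nothing else is affected.
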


\begin{proof}
By $\bm F\in W^{1,\infty}(0,T;L^2(\Om))$, we have $\bm u\in L^\infty(0,T;L^2(\Om))$ by Lemma \ref{lem-well}. Meanwhile, it is not difficult to verify $D_{0+}^{1-\bm\al}\bm F(s)\in L^2(\Om)$ and we see $\bm v(\,\cdot\,;s)\in L^\infty(0,T;L^2(\Om))$ for any $s\in (0,T)$ again from Lemma \ref{lem-well}. In other words, it reveals that both $\bm u$ and $\bm v(\,\cdot\,;s)$ are defined pointwise a.e. in time in the sense of $L^2(\Om)$. In view of the equivalence among fractional derivatives (see \cite{KRY20}) under this situation, we can rephrase \eqref{eq-IBVP-u4} and \eqref{eq-IBVP-v2} as
\begin{equation}\label{eq-IBVP-u5}
\begin{cases}
(\rd_t^{\bm\al}+\BA+\bm C)\bm u=\bm F & \mbox{in }\Om\times(0,T),\\
\bm u=\bm0 & \mbox{in }(\Om\times\{0\})\cup(\pa\Om\times(0,T))
\end{cases}
\end{equation}
and
\begin{equation}\label{eq-IBVP-v3}
\begin{cases}
(\rd_{s+}^{\bm\al}+\BA+\bm C)\bm v(\,\cdot\,;s)=\bm0 & \mbox{in }\Om\times(s,T),\\
\bm v(\,\cdot\,;s)=D_{0+}^{1-\bm\al}\bm F(s) & \mbox{in }\Om\times\{s\},\\
\bm v(\,\cdot\,;s)=\bm0 & \mbox{on }\pa\Om\times(s,T)
\end{cases}
\end{equation}
respectively, where $\rd_{s+}^{\bm\al}=\diag(\rd_{s+}^{\al_1},\dots,\rd_{s+}^{\al_K})$ and $\rd_{s+}^{\al_k}:=J_{0+}^{1-\al_k}\circ\pa_t$ denotes the Caputo derivative.

Now it suffices to demonstrate by direct calculation that the function $\bm u$ defined by \eqref{eq-rep-uv2} satisfies \eqref{eq-IBVP-u5}. To this end, we differentiate \eqref{eq-rep-uv2} with respect to $t$ and employ the initial condition of \eqref{eq-IBVP-v3} to get
\[
\pa_t\bm u(t)=\bm v(t;t)+\int_0^t\pa_t\bm v(t;s)\,\rd s=D_{0+}^{1-\bm\al}\bm F(t)+\int_0^t\pa_t\bm v(t;s)\,\rd s.
\]
Performing $J_{0+}^{1-\bm\al}=\diag(J_{0+}^{1-\al_1},\dots,J_{0+}^{1-\al_K})$ on both sides above, we use the formula
\[
J_{0+}^{1-\al_k}D_{0+}^{1-\al_k}f=f,\quad f\in W^{1,\infty}(0,T)
\]
to obtain
\begin{align}
\rd_t^{\bm\al}\bm u(t) & =J_{0+}^{1-\bm\al}\left\{D_{0+}^{1-\bm\al}\bm F(t)+\int_0^t\pa_t\bm v(t;s)\,\rd s\right\}\nonumber\\
& =\bm F(t)+J_{0+}^{1-\bm\al}\int_0^t\pa_t\bm v(t;s)\,\rd s.\label{eq-calc}
\end{align}
For each component of the second term on the right-hand side above, we exchange the order of integration to deduce
\begin{align*}
J_{0+}^{1-\al_k}\int_0^t\pa_t v_k(t;s)\,\rd s & =\int_0^t k_{1-\al_k}(t-s)\int_0^s\pa_s v_k(s;\tau)\,\rd\tau\rd s\\
& =\int_0^t\!\!\!\int_\tau^t k_{1-\al_k}(t-s)\pa_s v_k(s;\tau)\,\rd s\rd\tau\\
& =\int_0^t J_{\tau+}^{1-\al_k}\pa_t v_k(t;\tau)\,\rd\tau=\int_0^t\rd_{s+}^{\al_k}v_k(t;s)\,\rd s.
\end{align*}
Plugging the above equality into \eqref{eq-calc}, we utilize the governing equation of $\bm v(t;s)$ to derive
\begin{align*}
\rd_t^{\bm\al}\bm u(t) & =\bm F(t)+\int_0^t\rd_{s+}^{\bm\al}\bm v(t;s)\,\rd s=\bm F(t)-\int_0^t(\BA+\bm C)\bm v(t;s)\,\rd s\\
& =\bm F(t)-(\BA+\bm C)\int_0^t\bm v(t;s)\,\rd s=\bm F(t)-(\BA+\bm C)\bm u(t).
\end{align*}
Hence, $\bm u$ satisfies the governing equation in \eqref{eq-IBVP-u5}. The homogeneous initial and boundary conditions are obvious and the proof completed.
\end{proof}

Now we are ready to proceed to the proof of Theorem \ref{thm-ISP}.

According to the key assumption \eqref{eq-cond-mu}, we know $\rho_k\in W^{1,\infty}(0,T)$ and thus the source term $\bm g\odot\bm\rho\in W^{1,\infty}(0,T;L^2(\Om))$. Then a direct application of Lemma \ref{lem-Duhamel2} implies \eqref{eq-rep-uv2} with $\bm v(t;s)$ solving
\[
\begin{cases}
(\rd_{s+}^{\bm\al}+\BA+\bm C)\bm v(\,\cdot\,;s)=\bm0 & \mbox{in }\Om\times(s,T),\\
\bm v(\,\cdot\,;s)=\bm g\odot D_{0+}^{1-\bm\al}\bm\rho(s) & \mbox{in }\Om\times\{s\},\\
\bm v(\,\cdot\,;s)=\bm0 & \mbox{on }\pa\Om\times(s,T)
\end{cases}
\]
Thanks to \eqref{eq-cond-mu}, it turns out that $D_{0+}^{1-\al_k}\rho_k=(J^{\al_k}\rho_k)'=\mu'$ for all $k=1,\dots,K$, indicating that the initial condition of $\bm v(t;s)$ above becomes
\[
\bm v(\,\cdot\,;s)=\mu'(s)\bm g\quad\mbox{in }\Om\times\{s\}.
\]
As the initial value of all components of $\bm v(t;s)$ share a common multiplier $\mu'(s)$, it follows from the linearity of the problem that $\bm v(t;s)=\mu'(s)\bm y(t-s)$, where $\bm y$ solves the homogeneous problem \eqref{eq-IBVP-v0}. Then for any $k=1,\dots,K$, we have
\begin{equation}\label{eq-rep-mu}
u_k(t)=\int_0^t v_k(t;s)\,\rd s=(\mu'*y_k)(t).
\end{equation}
On the other hand, by the regularity assumption \eqref{eq-reg} and $\be>d/4-1$, Lemma \ref{lem-well} and the Sobolev embedding theorem assert
\begin{align*}
\bm y & \in L^1(0,T;D(\BA^{\be+1}))\subset L^1(0,T;C(\ov\Om)),\\
\bm u & \in L^\infty(0,T;D(\BA^{\be+1-\ve}))\subset L^\infty(0,T;C(\ov\Om)),
\end{align*}
where $\ve>0$ is sufficiently small. Then \eqref{eq-rep-mu} make pointwise sense in space, which allows us to take $\bm x=\bm x_0$ and employ the vanishing of the observation data to obtain
\[
0=u_k(\bm x_0,\,\cdot\,)=\mu'*y_k(\bm x_0,\,\cdot\,)\quad\mbox{in }(0,T).
\]
Recalling the integer $M$ in \eqref{eq-cond-rq}, we preform $J^{M(1-\al_K)}$ on both sides above to deduce
\begin{align*}
0 & =J^{M(1-\al_K)}(\mu'*y_k(\bm x_0,\,\cdot\,))=k_{M(1-\al_K)}*\mu'*y_k(\bm x_0,\,\cdot\,)\\
& =\mu'*(k_{M(1-\al_K)}*y_k(\bm x_0,\,\cdot\,))=\mu'*J^{M(1-\al_K)}y_k(\bm x_0,\,\cdot\,).
\end{align*}
Again from the positivity assumptions \eqref{eq-cond-C}--\eqref{eq-cond-rq}, Theorem \ref{thm-SPP} guarantees that $J^{M(1-\al_K)}\bm y>0$ a.e. in $\Om\times(0,T)$ and in particular $J^{M(1-\al_K)}y_k(\bm x_0,\,\cdot\,)>0$ in $(0,T)$. Then we can immediately conclude $\mu'=0$ in $(0,T)$ by means of the Titchmarsh convolution theorem (see \cite{T26}). Since $\mu(0)=0$ by \eqref{eq-cond-mu}, we have $J^{\al_k}\rho_k=\mu=0$ in $(0,T)$ and eventually $\rho_k=0$ in $(0,T)$ for all $k=1,\dots,K$ by the injectivity of $J^{\al_k}$.


\section{Reconstruction Methodology}\label{sec-Bayes}

We now describe the numerical strategy for recovering the unknown temporal sources and introduce the functional framework for the inverse problem. Let $\cX:=(L^2(0,T))^K$ denote the parameter space containing the unknown vector-valued source $\bm\rho=(\rho_1,\dots,\rho_K)^\T$. 
We equip $\cX$ with the norm
\[
\|\bm\rho\|_\cX:=\sum_{j=1}^K\|\rho_j\|_{L^2(0,T)}.
\]
Let $\mathcal Y:=(L^2(0,T;L^2(\Omega)))^K$ denote the state space. The parameter-to-state mapping is defined by the forward operator
\[
\BG:\cX\longrightarrow\mathcal Y,
\]
which assigns to each admissible source $\bm\rho$ the unique solution $\bm u=\BG(\bm\rho)$ of problem \eqref{eq-IBVP-u0}. The available data consist of measurements of the state at a fixed interior point $\bm x_0\in\Om$. The observation vector is denoted by $\bm y\in \BR^m$, where $m\in\BN$ represents the number of measurements. These observations are assumed to be corrupted by additive Gaussian noise and are modeled as
\begin{equation}\label{eq:data-new}
\bm y=\BG(\bm\rho)+\bm\eta,\quad\bm\eta\sim\cN(\bm0,\cC),
\end{equation}
where $\BG(\bm\rho)$ denotes the parameter-to-observable map and $\cC\in\BR^{m\times m}$ is a symmetric positive-definite covariance matrix describing the noise statistics. Here, the notation $\bm\eta\sim\mathcal N(0,\cC)$ means that $\bm\eta$ is a Gaussian random vector in $\BR^m$ with zero mean and the covariance matrix $\cC$.

Since the governing system \eqref{eq-IBVP-u0} is coupled, the simultaneous recovery of the components $\rho_1,\dots,\rho_K$ leads to an ill-posed inverse problem. To address this ill-posedness, we adopt a Bayesian formulation.


\subsection{Bayesian formulation}

Within the Bayesian framework \cite{S10,DS17}, the unknown parameter $\bm\rho$ is modeled as a random variable defined on $\cX$. Prior information is incorporated through a probability measure $\mu_0$ on $\cX$.

Given the noisy data $\bm y$, the data-misfit functional is defined by
\begin{equation}\label{eq:potential-new}
\Phi(\bm\rho;\bm y):=\f12\|\bm y-\BG(\bm\rho)\|_\cC^2=\f12|\cC^{-1/2}(\bm y-\BG(\bm\rho))|^2.
\end{equation}
This functional quantifies the discrepancy between model predictions and observations, weighted according to the noise statistics. Then Bayes' theorem defines the posterior measure $\mu^{\bm y}$ through
\[
\f{\rd\mu^{\bm y}}{\rd\mu_0}(\bm\rho)=\f1Z\exp(-\Phi(\bm\rho;\bm y)),
\]
where the normalizing constant
\[
Z:=\int_\cX\exp(-\Phi(\bm\rho;\bm y))\,\rd\mu_0(\bm\rho)
\]
ensures that $\mu^{\bm y}$ is a probability measure on $\cX$.

To guarantee that the Bayesian inverse problem is mathematically meaningful, one must verify that the posterior is well-defined and depends continuously on the data. These properties rely on the continuity of the forward operator, which is a direct consequence of the well-posedness result stated in Lemma \ref{lem-well}.

\begin{prop}
Under the same assumptions of Lemma \ref{lem-well}, let $\bm\rho,\delta\bm\rho\in\cX$ and denote $\bm u=\BG(\bm\rho),\wt{\bm u}=\BG(\bm\rho+\delta\bm\rho)$. Then there holds
\[
\|\wt{\bm u}(\bm x_0,\,\cdot\,)-\bm u(x_0,\,\cdot\,)\|_\cX\longrightarrow0\quad\mbox{as }\|\delta\bm\rho\|_\cX\to0.
\]
In other words, the mapping $\BG$ is continuous on $\cX$.
\end{prop}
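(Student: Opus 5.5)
By linearity, $\wt{\bm u}-\bm u=\BG(\bm\rho+\delta\bm\rho)-\BG(\bm\rho)$ is the mild solution $\delta\bm u$ of \eqref{eq-IBVP-u0} with source $\bm G\,\delta\bm\rho$ and zero initial value. This follows from uniqueness in Lemma \ref{lem-well}(ii): the integral equation \eqref{eq-mildsol} is affine in the source, so the difference of two mild solutions solves the integral equation with the difference of the sources. The claim therefore reduces to a bound of the form
\[
\|\delta\bm u(\bm x_0,\,\cdot\,)\|_\cX\le C\|\delta\bm\rho\|_\cX ,
\]
i.e.\ Lipschitz continuity at the observation point. Here I read the left-hand side as the $(L^2(0,T))^K$ norm of the time trace.

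The first step is to make the point evaluation meaningful. I assume $\bm G\in D(\BA^\be)$ with $\be>d/4-1$, as in \eqref{eq-reg}. Then $\bm G\,\delta\bm\rho\in L^2(0,T;D(\BA^\be))$, with $\|\bm G\,\delta\bm\rho\|_{L^2(0,T;D(\BA^\be))}\le C\|\bm G\|_{D(\BA^\be)}\|\delta\bm\rho\|_\cX$. This uses that each entry $g_{k\ell}\rho_\ell$ has norm $\|g_{k\ell}\|\,|\rho_\ell(t)|$, together with equivalence of finite-dimensional norms.

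The second step applies Lemma \ref{lem-well}(ii) with $p=2$ and $\ga\in[0,1)$ chosen so that $\be+\ga>d/4$; this is possible since $\be+1>d/4$. It gives
\[
\|\delta\bm u\|_{L^2(0,T;D(\BA^{\be+\ga}))}\le C\|\delta\bm\rho\|_\cX .
\]
The Sobolev embedding $D(\BA^{\be+\ga})\subset H^{2(\be+\ga)}(\Om)\subset C(\ov\Om)$ then yields $|\delta\bm u(\bm x_0,t)|\le C\|\delta\bm u(t)\|_{D(\BA^{\be+\ga})}$ for a.e.\ $t$. Squaring and integrating in $t$ gives the desired Lipschitz bound, and letting $\|\delta\bm\rho\|_\cX\to0$ proves the proposition.

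Alternatively, if $\bm\rho\in L^\infty$ were available, Proposition \ref{prop-mildsol} would give a pointwise-in-time bound via the weakly singular integral $J^{\al_K(1-\ga)}$, and Young's inequality would pass it to $L^2$. For $L^2$ data, Lemma \ref{lem-well}(ii) with $p=2$ is the more direct route.

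The main obstacle is interpretive rather than analytic. ``The same assumptions of Lemma \ref{lem-well}'' must be supplemented by enough spatial regularity of $\bm G$ for the trace at $\bm x_0$ to exist, namely $\be+\ga>d/4$ with $\ga<1$. I would state this explicitly. For $d\le3$ one may take $\be=0$ and $\ga>d/4$, so $\bm G\in L^2(\Om)$ suffices. The other technical point is checking that Lemma \ref{lem-well}(ii) genuinely holds for $p=2$ with the constant independent of the source, which the lemma asserts.
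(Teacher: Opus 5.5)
Your proposal is correct and takes the same route the paper sketches. The paper writes out no separate proof, but in the proof of Theorem \ref{well-P} it uses linearity, Lemma \ref{lem-well}(ii) and the Sobolev embedding to get $\|\bm u(\bm x_0,\,\cdot\,)\|_\cX\le M\|\bm\rho\|_\cX$, which is exactly your argument. Your explicit requirement $\bm G\in D(\BA^\be)$ with $\be>d/4-1$, so that some $\ga\in[0,1)$ gives $\be+\ga>d/4$, is a sensible clarification of regularity the paper leaves implicit.
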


As a consequence, the posterior distribution is well-defined, which further indicate the following result.
\begin{thm}
Under the same assumptions as above, the posterior measure $\mu^{\bm y}$ defined by \eqref{eq:data-new} is absolutely continuous with respect to the prior measure $\mu_0$, that is,
\[
\mu^{\bm y}\ll\mu_0.
\]
\end{thm}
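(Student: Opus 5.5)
The plan is the standard Stuart framework: show that the density $\exp(-\Phi(\bm\rho;\bm y))$ is a well-defined, measurable, strictly positive and bounded function on $\cX$ with a finite, positive normalizing constant $Z$. Then $\mu^{\bm y}$ defined through this Radon--Nikodym derivative is a probability measure, and it is absolutely continuous with respect to $\mu_0$ by construction.

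\emph{Step 1: continuity and measurability of $\Phi$.} By the preceding proposition, the parameter-to-observable map $\bm\rho\mapsto\BG(\bm\rho)$ is continuous from $\cX$ into the data space. Since $\cC$ is symmetric positive definite, $\bm z\mapsto\frac12|\cC^{-1/2}(\bm y-\bm z)|^2$ is continuous on $\BR^m$. Hence $\Phi(\,\cdot\,;\bm y)$ is continuous on $\cX$, in particular Borel measurable, and $\exp(-\Phi)$ is a measurable function on $\cX$. Linearity of the forward problem also gives a Lipschitz-type bound $|\BG(\bm\rho)|\le C\|\bm\rho\|_\cX$ via Lemma \ref{lem-well}(ii), although this is not needed for absolute continuity itself.

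\emph{Step 2: bounds on $Z$.} Since $\Phi\ge0$, we have $0<\exp(-\Phi)\le1$, so $Z\le\mu_0(\cX)=1$. For the lower bound, choose $R>0$ with $\mu_0(B_R)>0$, where $B_R$ is the ball of radius $R$ in $\cX$. Such an $R$ exists because $\mu_0$ is a probability measure and $\cX=\bigcup_R B_R$. On $B_R$ the linear bound gives
\[
\Phi(\bm\rho;\bm y)\le|\cC^{-1/2}|^2\bigl(|\bm y|^2+C^2R^2\bigr)=:\Phi_R<\infty,
\]
so $Z\ge e^{-\Phi_R}\mu_0(B_R)>0$. Even without the linear bound this holds, since $\exp(-\Phi)>0$ everywhere and hence its integral against a probability measure is strictly positive.

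\emph{Step 3: conclusion.} Set $\rd\mu^{\bm y}=Z^{-1}\exp(-\Phi)\,\rd\mu_0$. This is a nonnegative measure of total mass $1$. For any Borel set $E$ with $\mu_0(E)=0$, we get $\mu^{\bm y}(E)=Z^{-1}\int_E\exp(-\Phi)\,\rd\mu_0=0$, which proves $\mu^{\bm y}\ll\mu_0$.

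The only genuinely delicate step is Step 1: measurability of $\Phi$ requires the observation map $\bm\rho\mapsto\bm u(\bm x_0,\,\cdot\,)$ to be well defined, i.e. pointwise evaluation at $\bm x_0$ must make sense. I would secure this by the spatial regularity and Sobolev embedding argument used in the proof of Theorem \ref{thm-Lip}, together with the continuity proposition above.
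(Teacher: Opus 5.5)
Your proposal is correct and is essentially the paper's argument: the paper also uses continuity of $\BG$ to get measurability and finiteness of $\Phi(\,\cdot\,;\bm y)$, and then simply cites \cite[Proposition 2.1]{TSA17}. Your Steps 2--3, which check $0<Z\le1$ and read off $\mu^{\bm y}\ll\mu_0$ from the density, just spell out the content of that cited proposition.
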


\begin{proof}
Since $\BG$ is continuous, the potential $\Phi(\bm\rho;\bm y)$ is measurable and finite for $\mu_0$-almost every $\bm\rho$. The result then follows directly from \cite[Proposition 2.1]{TSA17}.
\end{proof}

We now study the stability of the Bayesian posterior with respect to perturbations in the observation data. In analogy with the deterministic stability analysis, our goal is to show that small changes in the data induce only small changes in the posterior distribution. In the Bayesian setting, this amounts to proving continuous dependence of the posterior measure on the data.

To quantify this stability, we invoke the Hellinger distance. For two probability measures $\mu$ and $\mu'$ that are absolutely continuous with respect to a common reference measure $\nu$, the Hellinger distance is defined as (see \cite[Section 4]{S10})
\[
D_{\mathrm{Hell}}(\mu,\mu'):=\left\{\f12\int\left(\sqrt{\f{\rd\mu}{\rd\nu}}-\sqrt{\f{\rd\mu'}{\rd\nu}}\right)^2\,\rd\nu\right\}^{1/2}.
\]

\begin{thm}\label{well-P}
Assume that the components $\rho_j$ $(j=1,\dots,K)$ are independent random variables and the prior measure has the product Gaussian structure
\[
\mu_0=\mu_{0,\rho_1}\otimes\cdots\otimes\mu_{0,\rho_K},\quad\mu_{0,\rho_j}=\cN(m_j,\cC_j).
\]
Then for any $\xi>0,$ there exists a constant $B(\xi)>0$ such that the estimate
\begin{equation}\label{eq-Hell}
D_{\mathrm{Hell}}(\mu^{\bm y_1},\mu^{\bm y_2})\le B(\xi)\,|\bm y_1-\bm y_2|
\end{equation}
holds for all data vectors $\bm y_1,\bm y_2$ satisfying $|\bm y_1|,|\bm y_2|\le\xi$.
\end{thm}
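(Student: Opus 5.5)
We follow the standard route of Stuart \cite{S10} (see also \cite{DS17}) for Lipschitz continuity of the posterior in the Hellinger metric. Two ingredients on the potential $\Phi$ in \eqref{eq:potential-new} are required. First, $\Phi$ must be bounded below and locally bounded above with at most polynomial growth in $\|\bm\rho\|_\cX$. Second, $\Phi$ must be locally Lipschitz in the data, with a Lipschitz constant growing at most polynomially in $\|\bm\rho\|_\cX$. The Gaussian structure of $\mu_0$, combined with Fernique's theorem, then makes all polynomial moments of $\|\bm\rho\|_\cX$ integrable. This lets us bound the normalizing constants and the Hellinger integral.

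\emph{Step 1: linear bound on the forward map.} Since \eqref{eq-IBVP-u0} is linear in $\bm\rho$, we obtain $\BG(\bm\rho+\delta\bm\rho)-\BG(\bm\rho)=\BG(\delta\bm\rho)$. Lemma~\ref{lem-well}(ii) with $p=2$, together with the estimate \eqref{eq-est-u} of Proposition~\ref{prop-mildsol}, gives a bound $|\BG(\bm\rho)|\le C_{\BG}\|\bm\rho\|_\cX$ for the discretized observation vector in $\BR^m$. Here $C_{\BG}$ is independent of $\bm\rho$, and pointwise evaluation at $\bm x_0$ is justified by the Sobolev embedding, exactly as in the proof of Theorem~\ref{thm-Lip}. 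Consequently $0\le\Phi(\bm\rho;\bm y)\le\|\cC^{-1/2}\|^2(\xi^2+C_{\BG}^2\|\bm\rho\|_\cX^2)$ for $|\bm y|\le\xi$. We also need a lower bound on $Z(\bm y)$. Restricting the integral to the ball $\{\|\bm\rho\|_\cX\le1\}$, which has positive $\mu_0$-measure because $\mu_0$ is a nondegenerate Gaussian product, gives $Z(\bm y)\ge\mu_0(\{\|\bm\rho\|_\cX\le1\})\,e^{-C(\xi^2+1)}=:z(\xi)>0$.

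\emph{Step 2: Lipschitz dependence on the data.} Expanding the difference of squared norms gives
\[
|\Phi(\bm\rho;\bm y_1)-\Phi(\bm\rho;\bm y_2)|\le\tfrac12\|\cC^{-1/2}\|^2\,|\bm y_1+\bm y_2-2\BG(\bm\rho)|\,|\bm y_1-\bm y_2|\le C(\xi+\|\bm\rho\|_\cX)|\bm y_1-\bm y_2|.
\]
Using $|e^{-a}-e^{-b}|\le|a-b|$ for $a,b\ge0$ yields $|Z(\bm y_1)-Z(\bm y_2)|\le C\int(\xi+\|\bm\rho\|_\cX)\,\rd\mu_0\,|\bm y_1-\bm y_2|$. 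Fernique's theorem, applied to each factor $\cN(m_j,\cC_j)$ of the product measure, ensures that this integral is finite.

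\emph{Step 3: Hellinger estimate.} Take $\nu=\mu_0$ and split
\[
2D_{\mathrm{Hell}}^2\le I_1+I_2,\quad I_1=\f2{Z(\bm y_1)}\int\bigl(e^{-\Phi_1/2}-e^{-\Phi_2/2}\bigr)^2\rd\mu_0,\quad I_2=2\bigl|Z(\bm y_1)^{-1/2}-Z(\bm y_2)^{-1/2}\bigr|^2Z(\bm y_2),
\]
where $\Phi_i:=\Phi(\bm\rho;\bm y_i)$. For $I_1$, the inequality $|e^{-a/2}-e^{-b/2}|\le\frac12|a-b|$ together with Step 2 and the lower bound $z(\xi)$ gives $I_1\le C(\xi)\int(\xi+\|\bm\rho\|_\cX)^2\rd\mu_0\,|\bm y_1-\bm y_2|^2$. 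For $I_2$, the mean value theorem applied to $z\mapsto z^{-1/2}$ on $[z(\xi),1]$ gives $I_2\le C(\xi)|Z(\bm y_1)-Z(\bm y_2)|^2$. Combining the two bounds yields \eqref{eq-Hell} with $B(\xi)$ depending on $\xi$, $\cC$, $C_{\BG}$ and the second moment of $\mu_0$.

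The main obstacle is Step 1, namely a uniform bound on $\BG$ as a map into $\BR^m$. The observation operator consists of point evaluations, so its continuity needs enough spatial regularity. For merely $L^2$-in-time sources this means invoking Lemma~\ref{lem-well}(ii) with $\ga$ close to $1$ and $\bm G$ smooth enough that $D(\BA^{\be+\ga})\subset C(\ov\Om)$. The time samples also require continuity in time, which I would obtain from \eqref{eq-est-u}, using that $J^{\al_K(1-\ga)}$ maps $L^2(0,T)$ into continuous functions whenever $\al_K(1-\ga)>1/2$. If this fails, I would instead interpret the data as local time averages, which are bounded on $L^2$.
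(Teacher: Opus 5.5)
Your proposal is correct and follows essentially the same route as the paper. Both arguments use a linear bound on the forward map via Lemma~\ref{lem-well} and the Sobolev embedding, then Fernique's theorem for the product Gaussian prior, then the standard Hellinger estimate; the paper imports that last step from \cite[Proposition 2.2]{TSA17}, whereas you write it out in Steps 2--3.

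Your handling of the point-in-time observations is more careful than the paper's, which only bounds $\bm u(\bm x_0,\,\cdot\,)$ in $L^2(0,T)$. One small tweak: bound $Z(\bm y)$ from below using a ball $\{\|\bm\rho\|_\cX\le R\}$ whose $\mu_0$-measure is at least $1/2$, so that you do not need nondegeneracy of the $\cC_j$.
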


\begin{proof}
By the well-posedness result for the forward problem in Lemma \ref{lem-well} together with the Sobolev embedding theorem, there exists a constant $M>0$ such that
\[
\|\bm u(x_0,\cdot)\|_{\cX} \le M \|\bm \rho\|_{\cX}.
\]
Hence, the forward operator $\BG$ is bounded from $\cX$ into $\cX$. Since each marginal prior $\mu_{0,\rho_j}$ is Gaussian, Fernique's theorem (see \cite[Theorem 6.9]{S10}) implies the exponential integrability of $\|\bm \rho\|_{\cX}^2$ with respect to $\mu_0$. Consequently, the data-misfit functional satisfies the integrability and local Lipschitz conditions required in the general Bayesian well-posedness theory.

Therefore, the Lipschitz stability estimate \eqref{eq-Hell} follows directly from \cite[Proposition 2.2]{TSA17}.
\end{proof}


\subsection{Iterative regularizing ensemble Kalman method}

Deriving the posterior distribution $\mu^{\bm y}$ is merely the first step in the Bayesian inverse problem. The main computational challenge lies in extracting quantitative information from the posterior, such as expectations, variances, or credible intervals.

With the rapid development of computational methods, Markov chain Monte Carlo (MCMC) techniques have become a standard tool for sampling from posterior distributions. However, classical MCMC methods often deteriorate under mesh refinement and become prohibitively slow for nonparametric inverse problems. To address this issue, Cotter et al.\ \cite{CRSW13} introduced the preconditioned Crank-Nicolson (pCN) MCMC method, which is well defined on function spaces and remains robust under discretization refinement. Nevertheless, for complex nonlinear models, a very large number of samples is still required, leading to substantial computational cost.

An alternative strategy is to employ ensemble Kalman-based techniques, which provide approximate Bayesian inference at significantly lower computational expense. The ensemble Kalman filter (EnKF), originally introduced by Evensen \cite{E06}, has become a widely used method in data assimilation; see \cite{LSZ15} for an overview and further applications. Building on this idea, Iglesias et al.\ \cite{ILS13} proposed the iterative ensemble Kalman method (IEKM), which introduces artificial dynamics and is applicable to a broad class of inverse problems. However, the numerical experiments in \cite{ILS13} indicate that early stopping is necessary to stabilize the iteration.

Motivated by this observation, Iglesias \cite{I15} incorporated the discrepancy principle from classical iterative regularization theory into the IEKM framework, leading to the iterative regularizing ensemble Kalman method (IREKM). This modification provides a systematic stabilization mechanism and plays a role similar to localization or covariance inflation, while retaining a rigorous mathematical foundation. Further deterministic analysis of this regularization strategy was carried out in \cite{I16}, where the approach was applied to various PDE-based inverse problems. A parameterized version of IREKM was later investigated in \cite{CIRS18}.

In the present work, we adapt the IREKM proposed in \cite{I15} to solve Problem \ref{prob-ISP}. Recall that the goal of the Bayesian formulation is to infer the input $\bm\rho=(\rho_1,\dots,\rho_K)^\T\in\cX$ from measured data $\bm y$, combining prior uncertainty and observational noise within the coupled model. The prior distribution is assumed to have the product structure
\[
\mu_0(\bm\rho)=\mu_{0,\rho_1}\otimes\cdots\otimes\mu_{0,\rho_K},
\]
where $\mu_{0,\rho_j}=\cN(m_j,\cC_j)$ for $j=1,\dots,K$. Given an observation $\bm y$, the likelihood is proportional to $\exp(-\Phi(\bm\rho;\bm y))$, where $\Phi(\bm\rho;\bm y)$ is the potential defined in \eqref{eq:potential-new}. The uncertainty in the inverse estimate of $\bm\rho$ given the data $\bm y$ is then characterized by the conditional posterior distribution $\mu^{\bm y}(\bm\rho\mid\bm y)$ defined through Bayes' formula
\[
\f{\rd\mu^{\bm y}}{\rd\mu_0}(\bm\rho)\propto\exp(-\Phi(\bm\rho;\bm y)).
\]
While MCMC methods such as pCN provide a principled framework for sampling from $\mu^{y}$, their computational cost is often prohibitive in large-scale nonlinear problems. The IREKM offers a computationally efficient alternative by evolving an ensemble $\{\bm\rho^{(j)}\}_{j=1}^{N_e}$ of $N_e$ particles initialized from the prior distribution. At each iteration, the ensemble is updated through Kalman-type formulas involving empirical covariance and cross-covariance operators computed directly from the ensemble. The discrepancy principle supplies a natural stopping criterion that regularizes the iteration and prevents overfitting to noisy observations.

The resulting ensemble provides approximations of the posterior mean, variance, and other relevant statistical quantities, thereby yielding a computationally efficient, stable, and derivative-free framework for uncertainty quantification in the present inverse problem.

We now introduce the iterative procedure that formalizes this strategy within the ensemble Kalman framework. This methodology is specifically designed to overcome the limitations of conventional gradient-based optimization techniques frequently employed in inverse problems. By avoiding adjoint-based gradient evaluations, the approach substantially reduces analytical complexity and computational expense. This advantage is particularly significant in nonlinear settings, where derivatives may be difficult to derive analytically or costly to approximate numerically. Due to its efficiency, robustness, and ease of implementation, the ensemble Kalman methodology has become widely adopted across numerous scientific and engineering disciplines.

Extensive discussions of the theoretical and numerical foundations of ensemble Kalman inversion and its regularized variants can be found in \cite{E18,I15,ILS13}. In the linear framework, convergence properties are rigorously established by Schillings and Stuart in \cite{SS18}. However, theoretical convergence guarantees for the Iterative Regularizing Ensemble Kalman Method (IREKM) in fully nonlinear regimes remain largely unresolved; see \cite{I15} for further insights.

Beyond the present work, ensemble Kalman-based regularization techniques have been successfully applied to a broad range of inverse problems arising from diverse physical models. Representative examples include the simultaneous identification of parameters in plasticity models for power hardening materials \cite{TB25}, the recovery of time-dependent fractional orders in time-fractional diffusion equations motivated by shale gas applications \cite{B25}, and the Bayesian identification of fractional orders together with spatial components in time-space fractional diffusion equations \cite{ZJY18}. These studies further highlight the versatility and practical effectiveness of the IREKM framework in addressing complex nonlinear inverse problems.

Motivated by these considerations, we now present the detailed implementation of the proposed method, tailored to the nonlinear inverse problem~\ref{prob-ISP}. The complete iterative procedure is summarized in Algorithm~\ref{algo1}.
\begin{algorithm}[htbp]
\begin{enumerate}
\item[] Let $\xi\in(0,1)$ and $\tau>1/\xi$. Generate $N_e$ initial ensemble $\{\bm\rho_0^{(j)}\}_{j=1}^{N_e}=\{(\bm\rho_{1,0}^{(j)},\dots,\bm\rho_{K,0}^{(j)})^\T\}_{j=1}^{N_e}$ from the prior distribution $\mu_0(\bm\rho)$. For  $n=0,1,\dots$,
\item[{\bf 1.}] {\bf Prediction:} Calculate $\bm w_n^{(j)}=\BG(\bm\rho_n^{(j)})$ for $j=1,\dots,N_e$ and compute the ensemble mean
$$
\ov{\bm w}_n=\f1{N_e}\sum_{j=1}^{N_e}\bm w_n^{(j)}.
$$
\item[{\bf 2.}] {\bf Discrepancy principle:} Let $\delta=\|\bm y-\BG(\bm\rho^\dagger)\|_\cC$, where $\bm\rho^\dagger$ denotes the exact solution. If the residual
\begin{equation}\label{stopped}
R_n=\|\bm y-\ov{\bm w}_n\|_\cC\le\delta\tau,   
\end{equation}
then stop and return the estimate
$$
\ov{\bm\rho}_n=\left(\ov{\bm\rho}_{1,n},\dots,\ov{\bm\rho}_{K,n}\right)^\T,\quad\ov{\bm\rho}_{k,n}:=\f1{N_e}\sum_{j=1}^{N_e}\bm\rho_{k,n}^{(j)},\quad k=1,\dots,K.
$$
\item[{\bf 3.}] {\bf Analysis step:} Let 
\begin{align*}
\bm C_n^{ww} & :=\f1{N_e-1}\sum_{j=1}^{N_e}\left(\BG(\bm\rho_n^{(j)})-\ov{\bm w}_n\right)\left(\BG(\bm\rho_n^{(j)})-\ov{\bm w}_n\right)^\T,\\
\bm C_n^{\rho_k w} & :=\f1{N_e-1}\sum_{j=1}^{N_e}\left(\bm\rho_{k,n}^{(j)}-\ov{\bm\rho}_{k,n}\right)\left(\BG(\bm\rho_n^{(j)})-\ov{\bm w}_n\right)^\T,\quad k=1,\dots,K.
\end{align*}
Update each ensemble member as
$$
\bm\rho_{k,n+1}^{(j)}=\bm\rho_{k,n}^{(j)}+\bm C_n^{\rho_k w}(\bm C_n^{ww}+\nu_n\cC)^{-1}\left(\bm y-\bm w_n^{(j)}\right),\quad j=1,\dots,N_e,\ k=1,\dots,K,
$$
where $\nu_n$ is chosen as follows: Let $\nu_0$ be an initial guess, and $\nu_n^{i+1}=2^i \nu_0$. Choose $\nu_n=\nu_n^M$
where $M$ is the first integer such that 
\[
\nu_n^M\left\|\left(\bm C_n^{ww}+\nu_n^M\cC\right)^{-1}(\bm y-\ov{\bm w}_n)\right\|_\cC\ge\xi\left\|\cC^{-1} (\bm y-\ov{\bm w}_n)\right\|.
\]
\end{enumerate}
\caption{Iterative regularizing ensemble Kalman method (IREKM).}\label{algo1}
\end{algorithm}


\section{Numerical Reconstructions}\label{sec-numer}

This section presents a series of one-dimensional numerical experiments to validate Algorithm~\ref{algo1}. The goal is to reconstruct the temporal component $\bm\rho(t)=(\rho_1(t),\dots,\rho_K(t))^\T$ of the source term in the coupled system \eqref{model}. The reconstruction quality is evaluated by comparing the estimated parameters with the exact solutions used to generate synthetic data. We also report the computational performance of the method to demonstrate its efficiency and potential scalability to higher-dimensional problems.

In all numerical experiments, we restrict the spatial dimension $d=1$ and simply take $\Omega=(0,1)$. The final time is fixed as $T=1$. The forward problem is solved using a finite difference scheme on a uniform grid with mesh sizes $\Delta t=\Delta x=0.01$ in both time and space. To evaluate the reconstruction accuracy, we compute the relative errors
\[
e_{\rho_k}^n=\f{\|\ov{\bm\rho}_{k,n}-\bm\rho_k^\dagger\|}{\|\bm\rho_k^\dagger\|}\quad(k=1,\dots,K),\quad E_n=\f{\|\ov{\bm\rho}_n-\bm\rho^\dagger\|}{\|\bm\rho^\dagger\|}.
\]
Here, $\ov{\bm\rho}_{k,n}$ denotes the posterior mean of $\bm\rho_k$ at iteration $n$ of the IREKM algorithm. The synthetic observations are generated from
\[
\bm y=\BG(\bm\rho^\dagger)+\bm\eta,\quad\bm\eta\sim\cN(0,\cC),
\]
where $\cC=\sigma^2\bm I$, $\bm I$ is the identity matrix, and $\sigma>0$ denotes the standard deviation of the noise.

The ensemble size is fixed to $N_e=200$ in all tests. The algorithmic parameters are chosen as $\nu_0=0.01$, $\xi=0.8$ and $\tau\approx1/\xi$; see \cite{I15}. The prior distributions of the temporal functions $\rho_k$ are modeled as Gaussian measures
\[
\mu_{0,\rho_k}=\mu_0(\rho_k)=\cN(m_k,\cC_k),\qquad k=1,\dots,K,
\]
where $m_k$ denotes the prior mean function and $\cC_k$ is the covariance operator defined by $\cC_k=M_k A^{-s_k}$. Here $A^{-s_i}$ is the fractional Laplacian with the homogeneous Dirichlet boundary condition, where the smoothness parameter $s_i>1/2$ controls the regularity of the prior samples: larger values produce smoother realizations. The scaling constant $M_i>0$ determines the overall variance and thus the amplitude of admissible fluctuations around the mean. In all experiments, we take $M_i=100$ and $s_k=2$ for $k=1,\dots,K$.

The prior mean functions $m_k$ are chosen as piecewise constant profiles whose values match those of the exact solutions $\bm\rho_k^\dagger$ at the endpoints of the time interval. In addition, the true solution is assumed to be a realization from the same distribution as the initial ensemble. This ensures that the ensemble members share the same regularity and temporal structure as the truth, a requirement related to the invariance subspace property of ensemble Kalman methods; see \cite{I15,I16,ILS13}. As a result, the initial ensemble spans a subspace that is consistent with the true parameter, providing a stable starting point for the iterative reconstruction.

For simplicity, the elliptic operators $\cA_k$ in \eqref{eq-IBVP-u0} are all chosen as $-\pa_x^2$, and the coupling matrix $C$ is defined by
\[
c_{ij}=\begin{cases}
K+1, & i=j,\\
-1, & i\ne j,
\end{cases}
\]
which is obviously positive-definite.

All simulations are performed in MATLAB R2017a. Time integrations are computed using the trapezoidal rule, which provides a good compromise between accuracy and computational cost.


\subsection{Validation of the non-degeneracy condition}\label{sub5.1}

In this subsection, we restrict ourselves to a coupled system of two components, i.e., $K=2$. The purpose is to examine the influence of the key non-degeneracy condition $\det\bm G(x_0)\ne0$ in Theorem \ref{thm-Lip} on the reconstruction of the unknown source terms. The fractional orders are chosen as $\al_1=0.8,\al_2=0.3$ and the prior mean functions are taken to be $m_1(t)=1,m_2(t)=0$. The exact temporal sources are chosen as
\[
\rho_1(t)=\cos(2\pi t),\quad\rho_2(t)=\sin(-\pi t),
\]
and the measurement data are given by
\[
\bm y(t)=(u_1(x_0,t),u_2(x_0,t))^\T,\quad\mbox{with }x_0=0.5.
\]
To demonstrate the necessity of the determinant condition, we compare degenerate and non-degenerate configurations of the spatial component $\bm G$ by the next two examples.

\begin{exa}
We first test the degenerate configuration, that is, $\det\bm G(x_0)=0$. We consider two choices of $(g_1,g_2)$ for which the determinant condition fails:
\begin{itemize}
\item{\bf Case A:} $g_1(x)=\cos(\pi x)$, $g_2(x)=2$.
\item{\bf Case B:} $g_1(x)=2$, $g_2(x)=x-0.5$.
\end{itemize}
In both cases, the matrix $\bm G(x_0)$ becomes singular at $x_0=0.5$, so that the identifiability condition is violated.

Figures \ref{fig1}--\ref{fig2} show the reconstruction results under the noise level $\sigma=0.0001$ for Case A and Case B, respectively. The reconstruction results are displayed in Figure \ref{fig2}. Similar instability and large reconstruction errors are observed. Therefore, both degenerate cases clearly demonstrate the instability of the reconstruction when the determinant condition is not satisfied. More precisely, in Case A we have $g_1(0.5)=0$ and $g_2(0.5)\ne0$, and it is readily seen from Figure \ref{fig1} that the reconstruction of $\rho_2(t)$ performs much better than that of $\rho_1(t)$. While in Case B, one can see an opposite situation from Figure \ref{fig2}. Therefore, this example demonstrates the absence of full identifiability when $\det\bm G(x_0)=0$, that is, at least one component of the unknown becomes unrecoverable. This degeneracy leads to partial identifiability and results in instability of the associated inverse problem.
\begin{figure}[htbp]\centering
\begin{minipage}[b]{0.45\textwidth}\centering
\includegraphics[width=\linewidth]{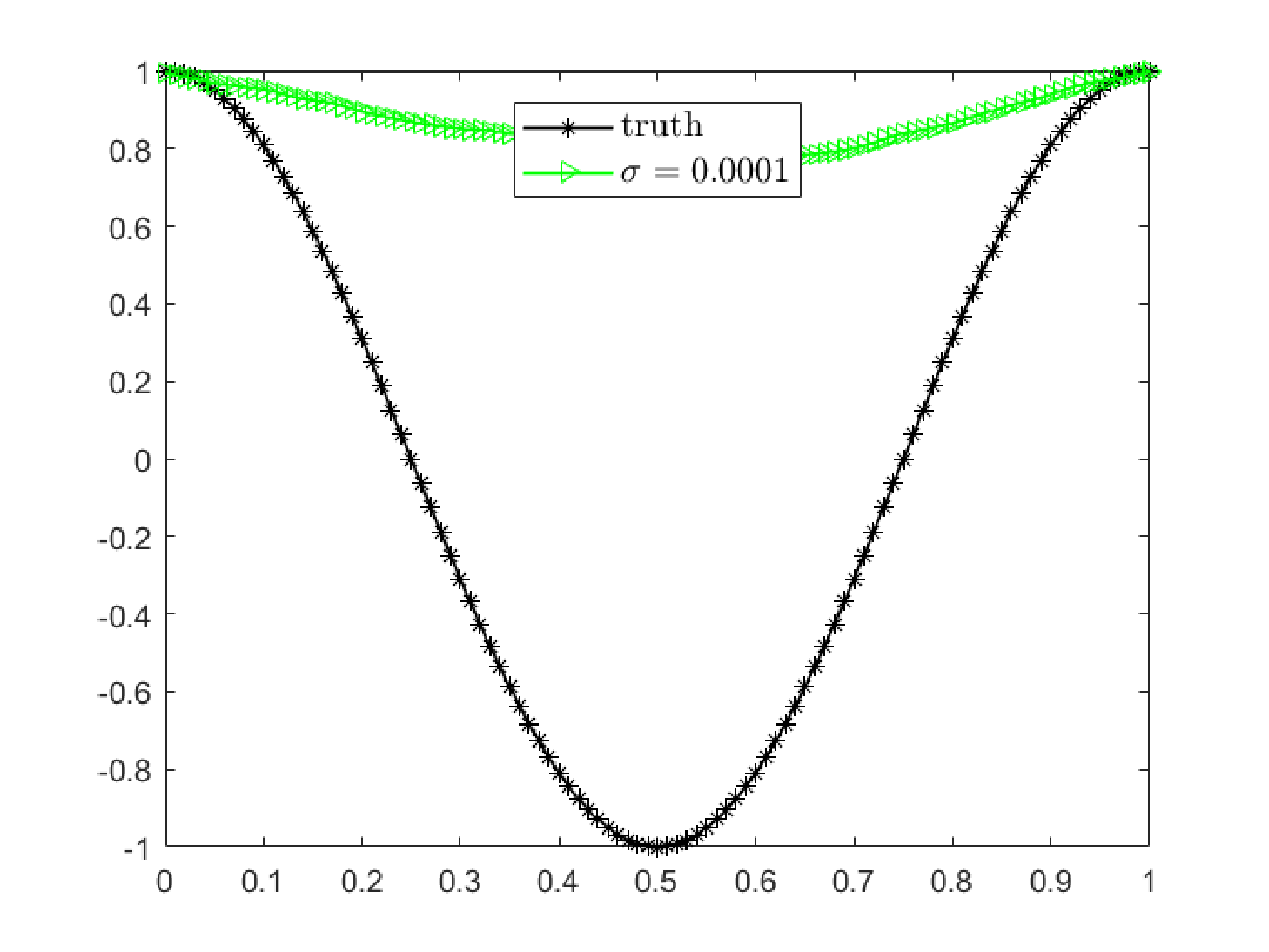}
\subcaption{Reconstruction of $\rho_1(t)$}\label{fig:rho1}
\end{minipage}
\hfill
\begin{minipage}[b]{0.45\textwidth}\centering
\includegraphics[width=\linewidth]{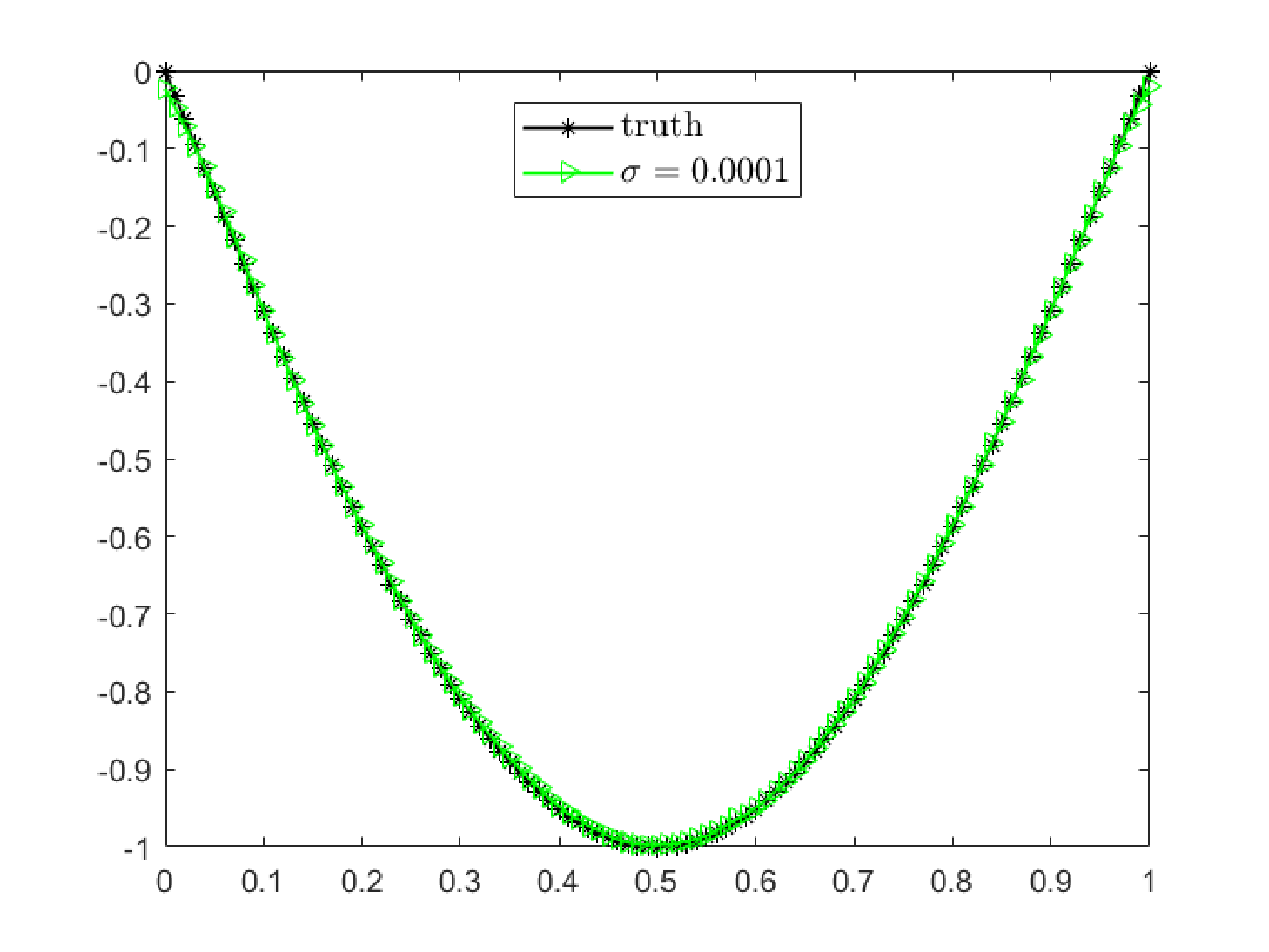}
\subcaption{Reconstruction of $\rho_2(t)$}\label{fig:rho2}
\end{minipage}
\caption{Reconstruction results for Case A ($\det\bm G(x_0)=0$) with noise level $\sigma=0.0001$}\label{fig1}
\end{figure}

\begin{figure}[htbp]\centering
\begin{minipage}[b]{0.45\textwidth}\centering
\includegraphics[width=\linewidth]{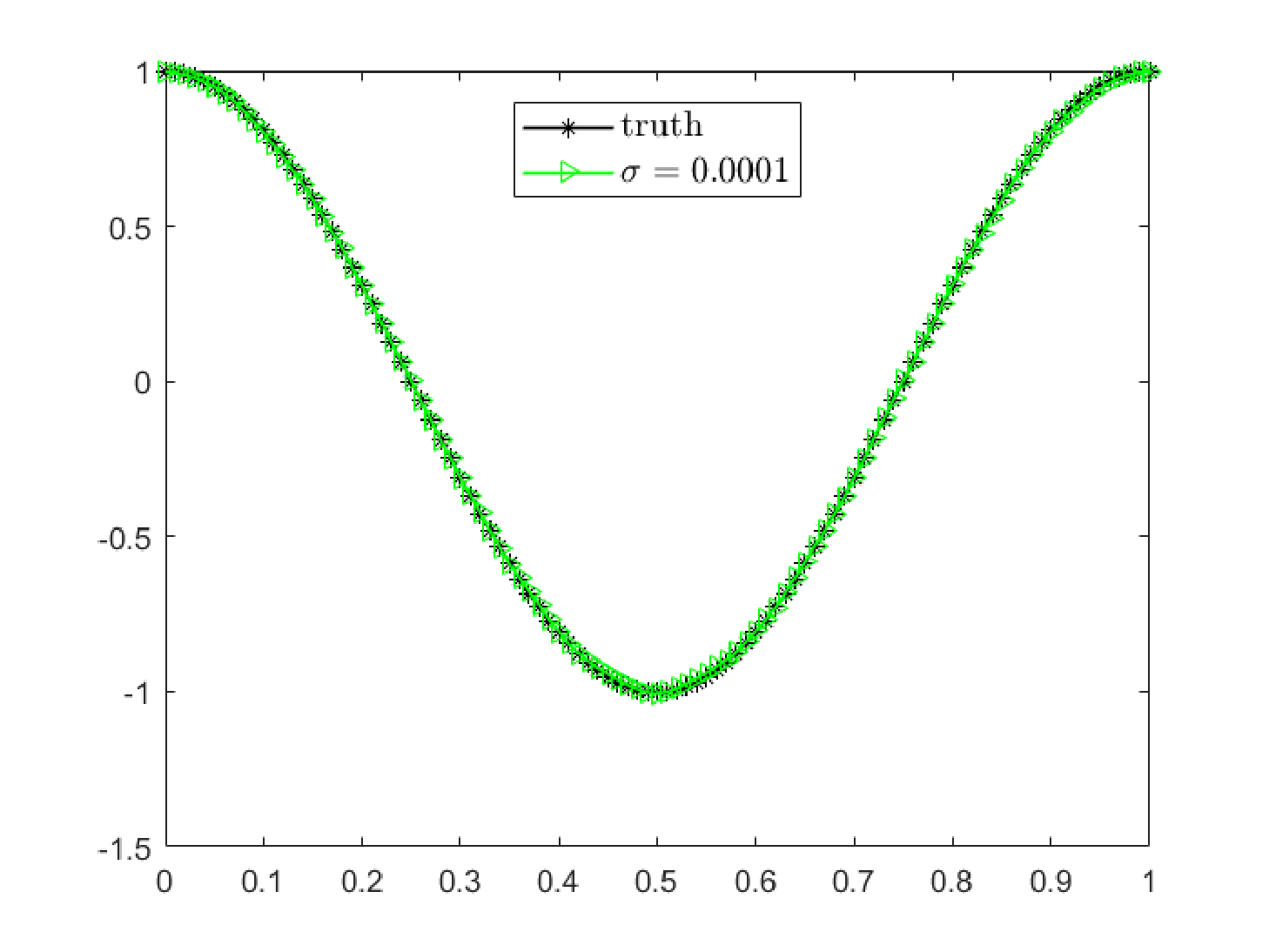}
\subcaption{Reconstruction of $\rho_1(t)$}\label{fig:rho1_caseB}
\end{minipage}
\hfill
\begin{minipage}[b]{0.45\textwidth}\centering
\includegraphics[width=\linewidth]{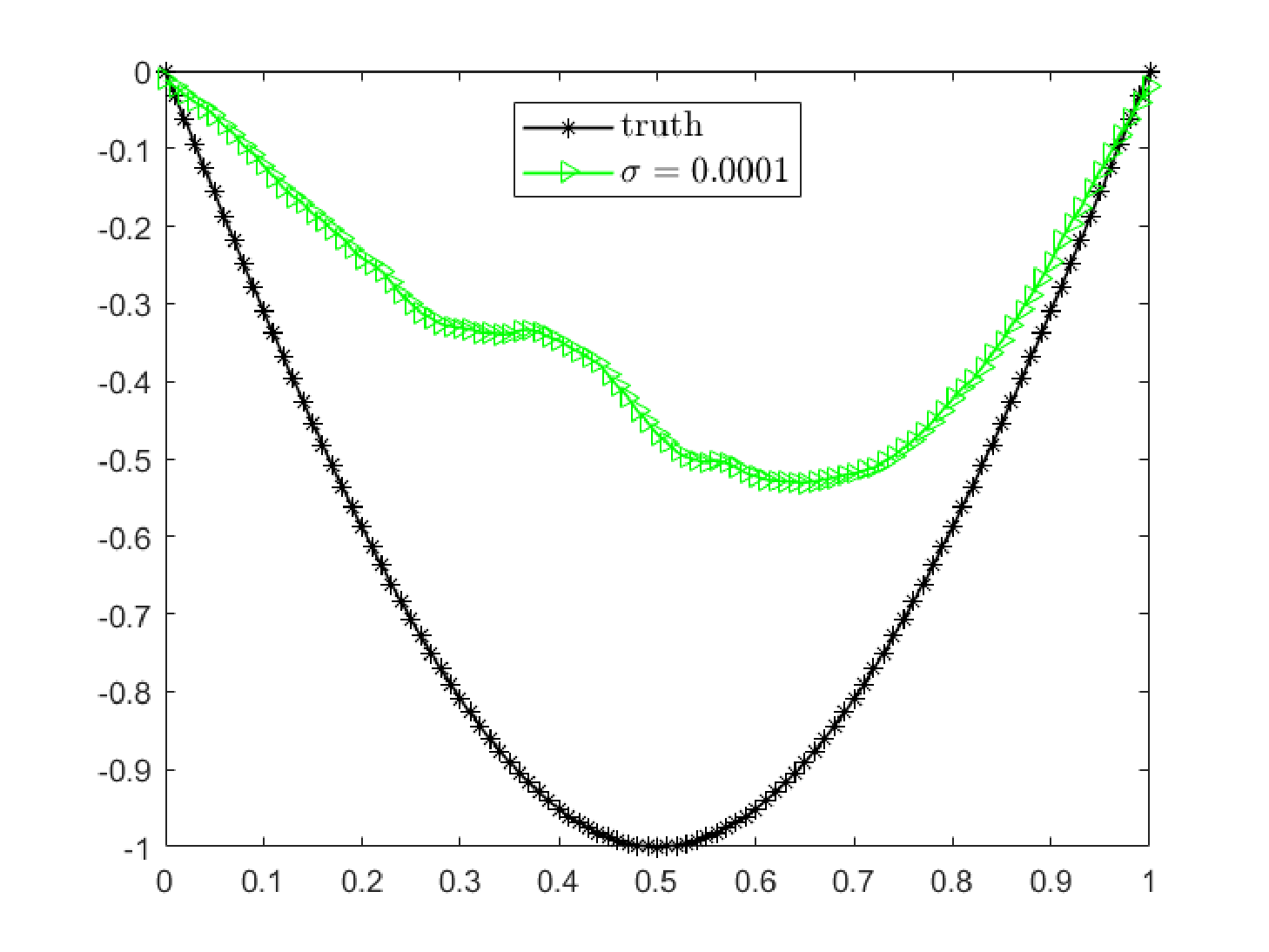}
\subcaption{Reconstruction of $\rho_2(t)$}\label{fig:rho2_caseB}
\end{minipage}
\caption{Reconstruction results for Case B ($\det\bm G(x_0)=0$) with noise level $\sigma=0.0001$}\label{fig2}
\end{figure}
\end{exa}

\begin{exa}\label{ex5.2}
Next, we test the identifiable configuration with $\det\bm G(x_0)\ne0$. We now choose
\[
g_1(x)=x+1,\quad g_2(x)=2,
\]
which ensures that $\det\bm G(x_0)\ne0$ at $x_0=0.5$. Figure \ref{fig3} shows the evolution of the absolute errors between the ensemble means and the true unknowns for different noise levels. The red curve corresponds to the initial ensemble mean, while the green curve represents the final ensemble mean.
\begin{figure}[htbp]\centering
\begin{minipage}[b]{0.31\textwidth}\centering
\includegraphics[width=\linewidth]{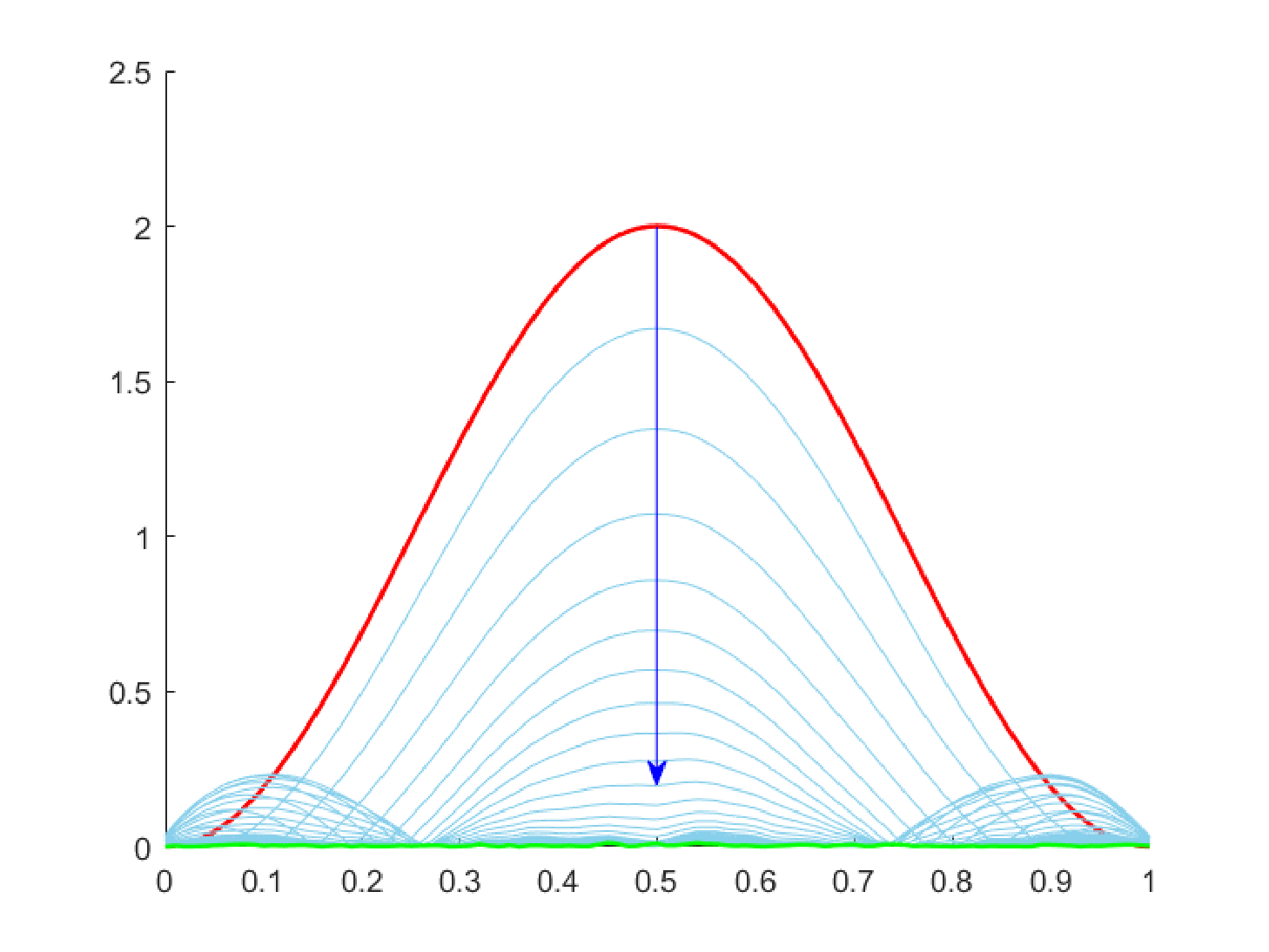}
\subcaption{$\rho_1(t)$ with $\sigma=0.0001$}\label{fig:err_g1_eps1}
\end{minipage}
\hfill
\begin{minipage}[b]{0.31\textwidth}\centering
\includegraphics[width=\linewidth]{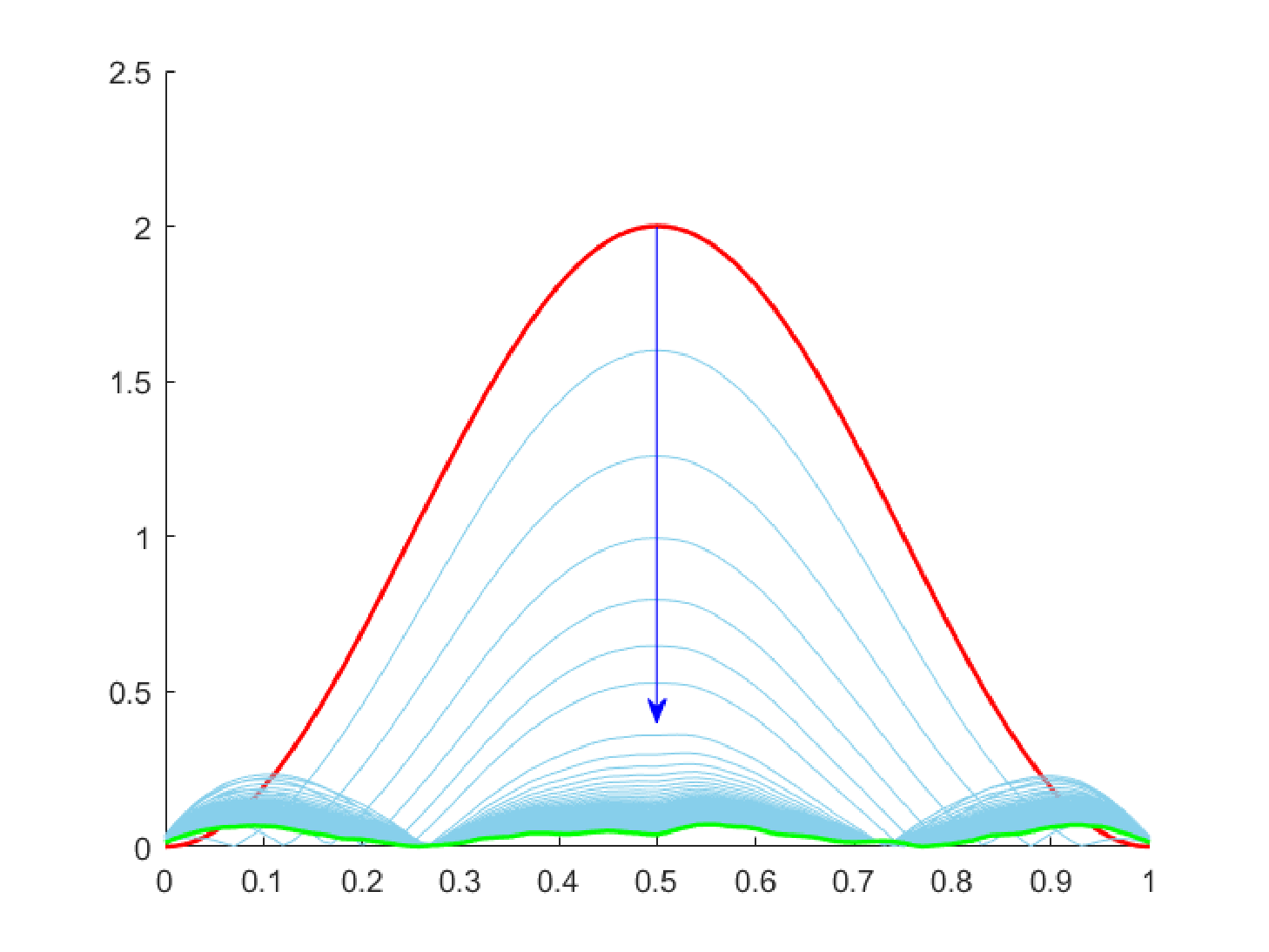}
\subcaption{$\rho_1(t)$ with $\sigma=0.001$}\label{fig:err_g1_eps2}
\end{minipage}
\hfill
\begin{minipage}[b]{0.31\textwidth}\centering
\includegraphics[width=\linewidth]{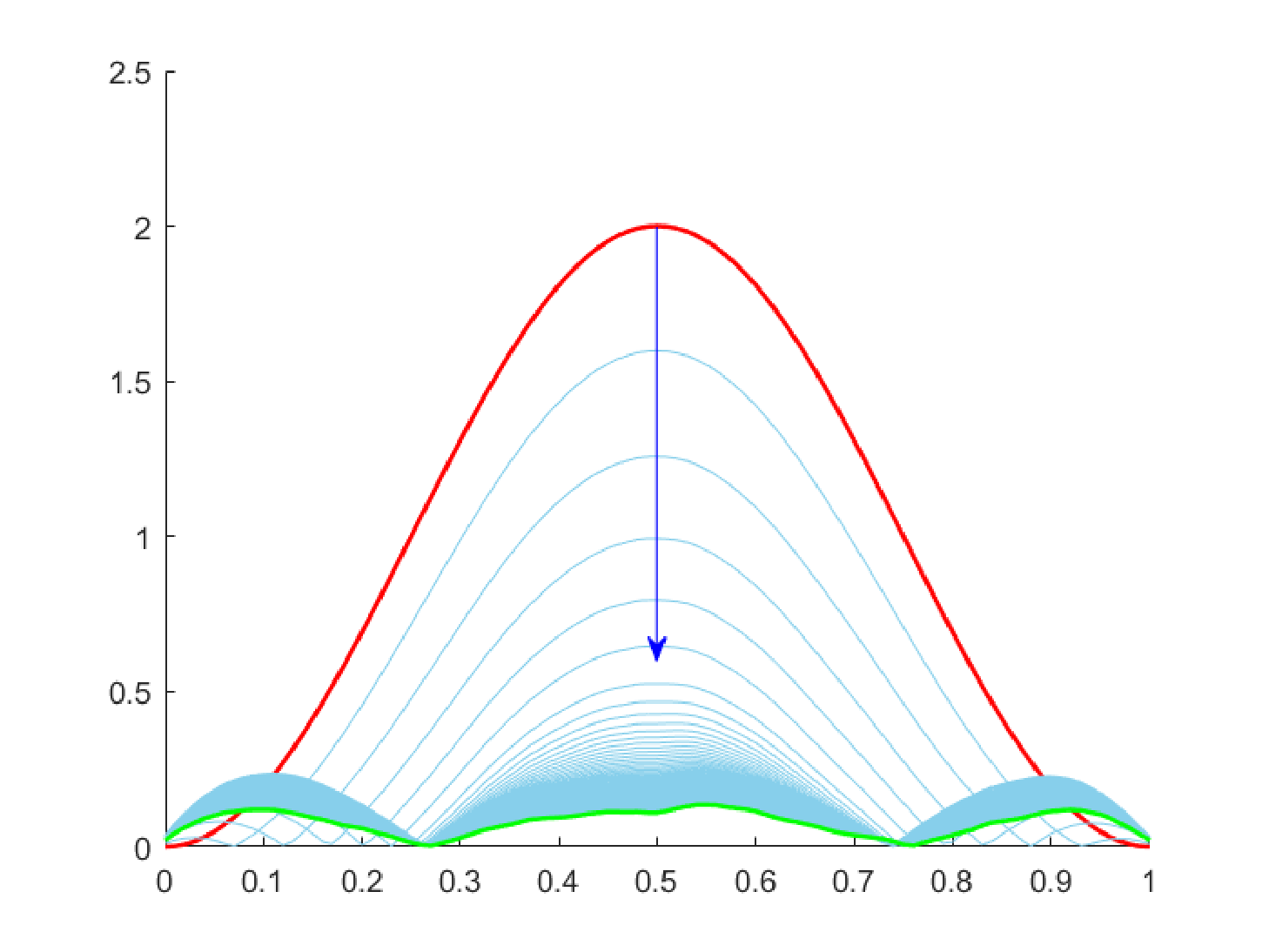}
\subcaption{$\rho_1(t)$ with $\sigma=0.01$}\label{fig:err_g1_eps3}
\end{minipage}
\vspace{0.5em}
  
\begin{minipage}[b]{0.31\textwidth}\centering
\includegraphics[width=\linewidth]{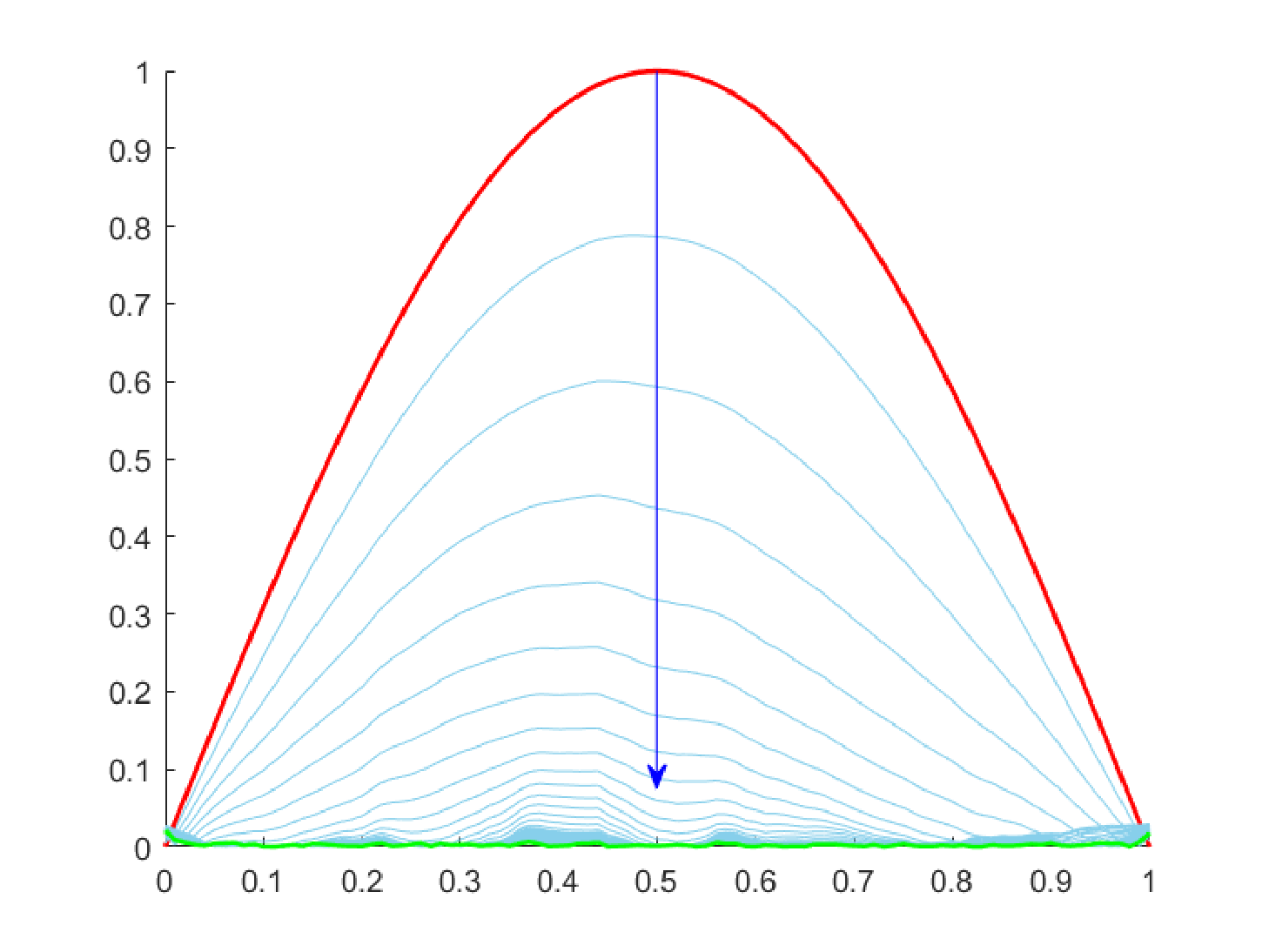}
\subcaption{$\rho_2(t)$ with $\sigma=0.0001$}\label{fig:err_g2_eps1}
\end{minipage}
\hfill
\begin{minipage}[b]{0.31\textwidth}\centering
\includegraphics[width=\linewidth]{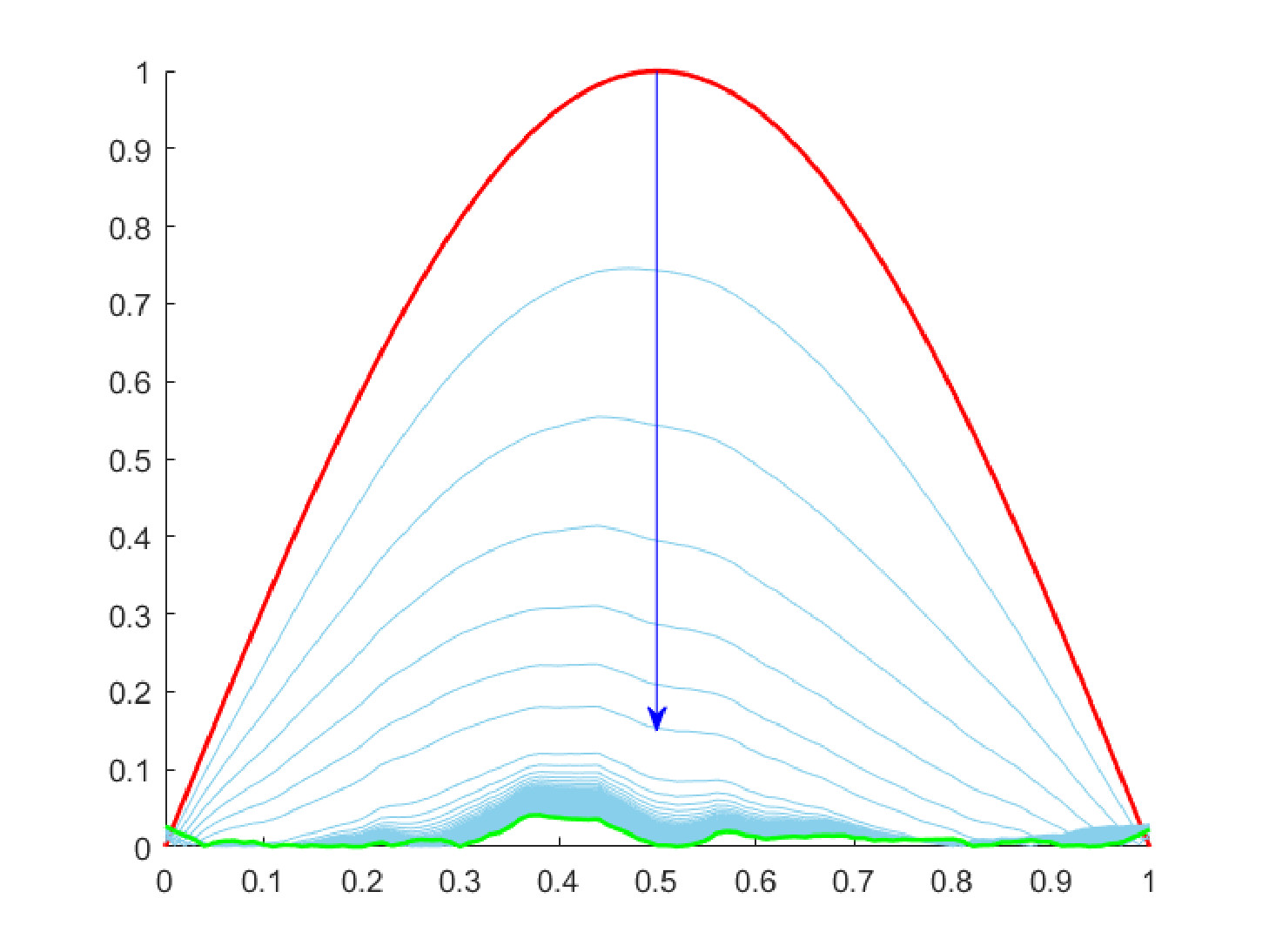}
\subcaption{$\rho_2(t)$ with $\sigma=0.001$}\label{fig:err_g2_eps2}
\end{minipage}
\hfill
\begin{minipage}[b]{0.31\textwidth}\centering
\includegraphics[width=\linewidth]{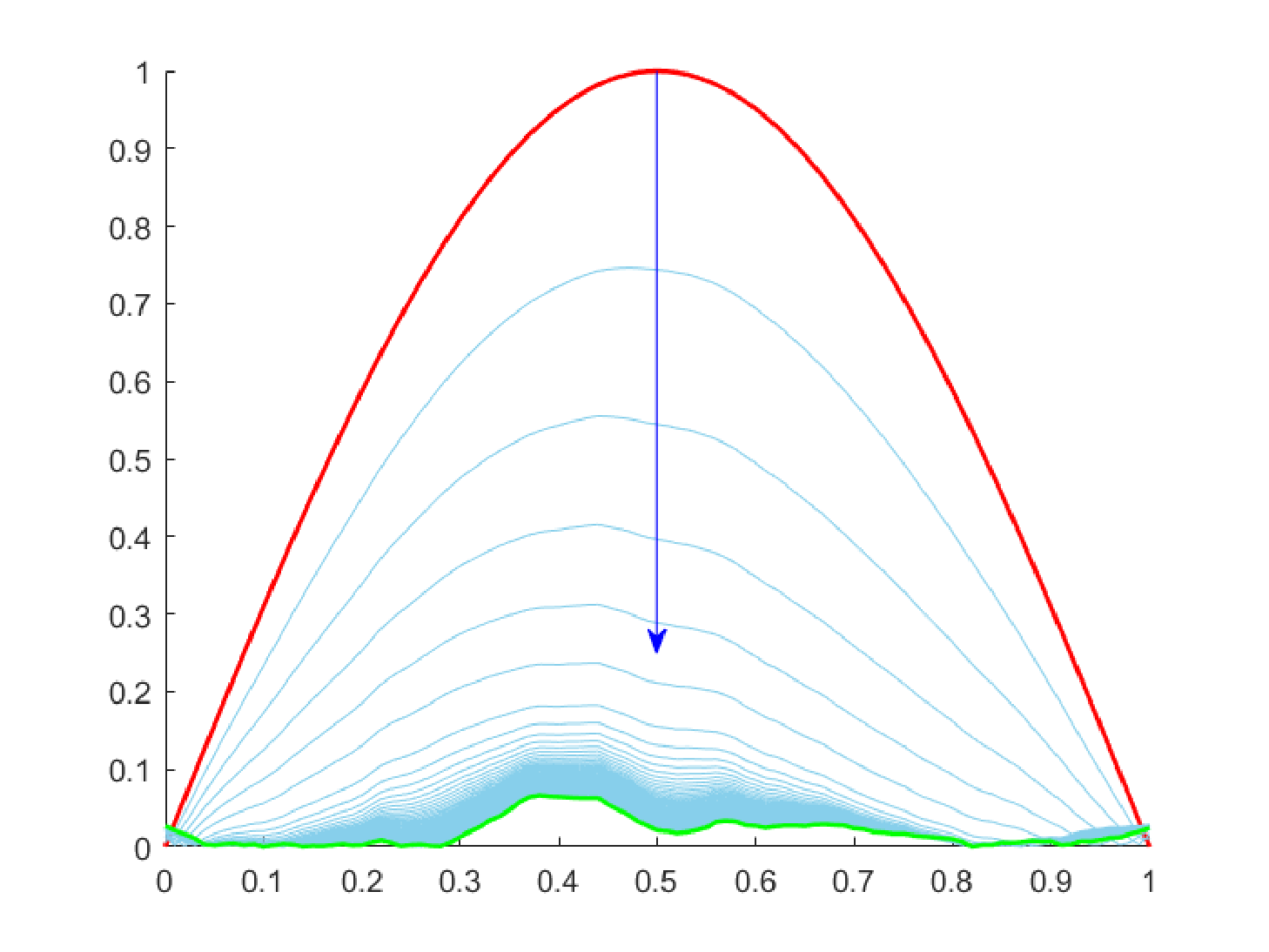}
\subcaption{$\rho_2(t)$ with $\sigma=0.01$}\label{fig:err_g2_eps3}
\end{minipage}
\caption{Absolute reconstruction errors for different noise levels with $\det\bm G(x_0)\ne0$. Top row corresponds to $\rho_1(t)$ and the bottom row to $\rho_2(t)$. From left to right, the noise levels are $\sigma=0.0001$, $0.001$ and $0.01$, respectively}\label{fig3}
\end{figure}

Figure \ref{fig4} compares the reconstructed sources with the exact solutions for $\sigma=0.0001$, $0.001$, and $0.01$. As expected, the reconstruction of both unknowns looks satisfactory, illustrating the essential role played by the non-degeneracy condition $\det\bm G(x_0)\ne0$. Therefore, this example confirms that the proposed IREKM algorithm accurately reconstructs the temporal source terms in this case. Moreover, the reconstruction quality improves as the noise level decreases, demonstrating the stability and robustness of the method under the identifiability condition.
\begin{figure}[htbp]\centering
\begin{minipage}[b]{0.45\textwidth}\centering
\includegraphics[width=\linewidth]{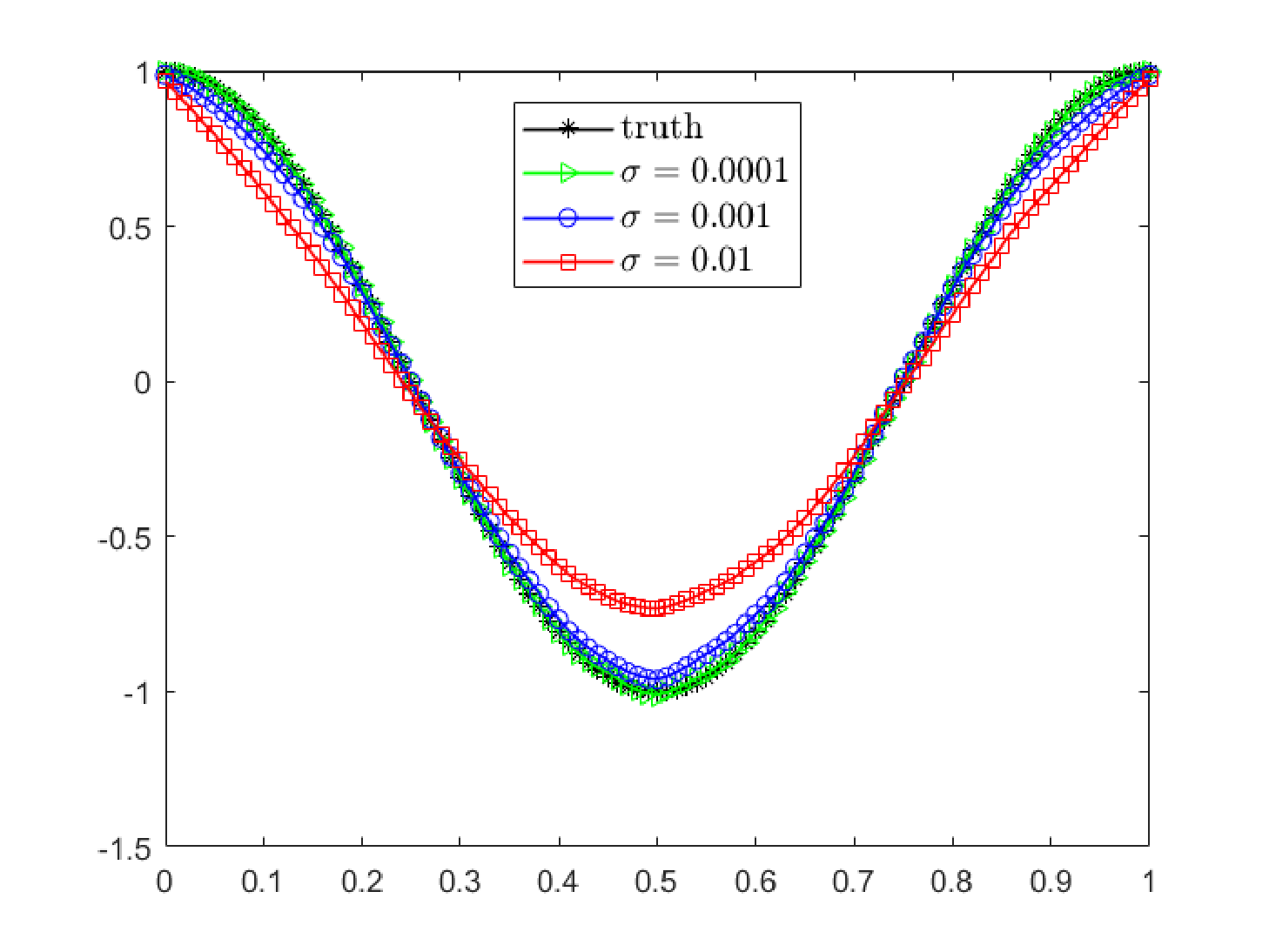}
\subcaption{Reconstruction of $\rho_1(t)$}\label{fig:recon_g1_nonzero}
\end{minipage}
\hfill
\begin{minipage}[b]{0.45\textwidth}\centering
\includegraphics[width=\linewidth]{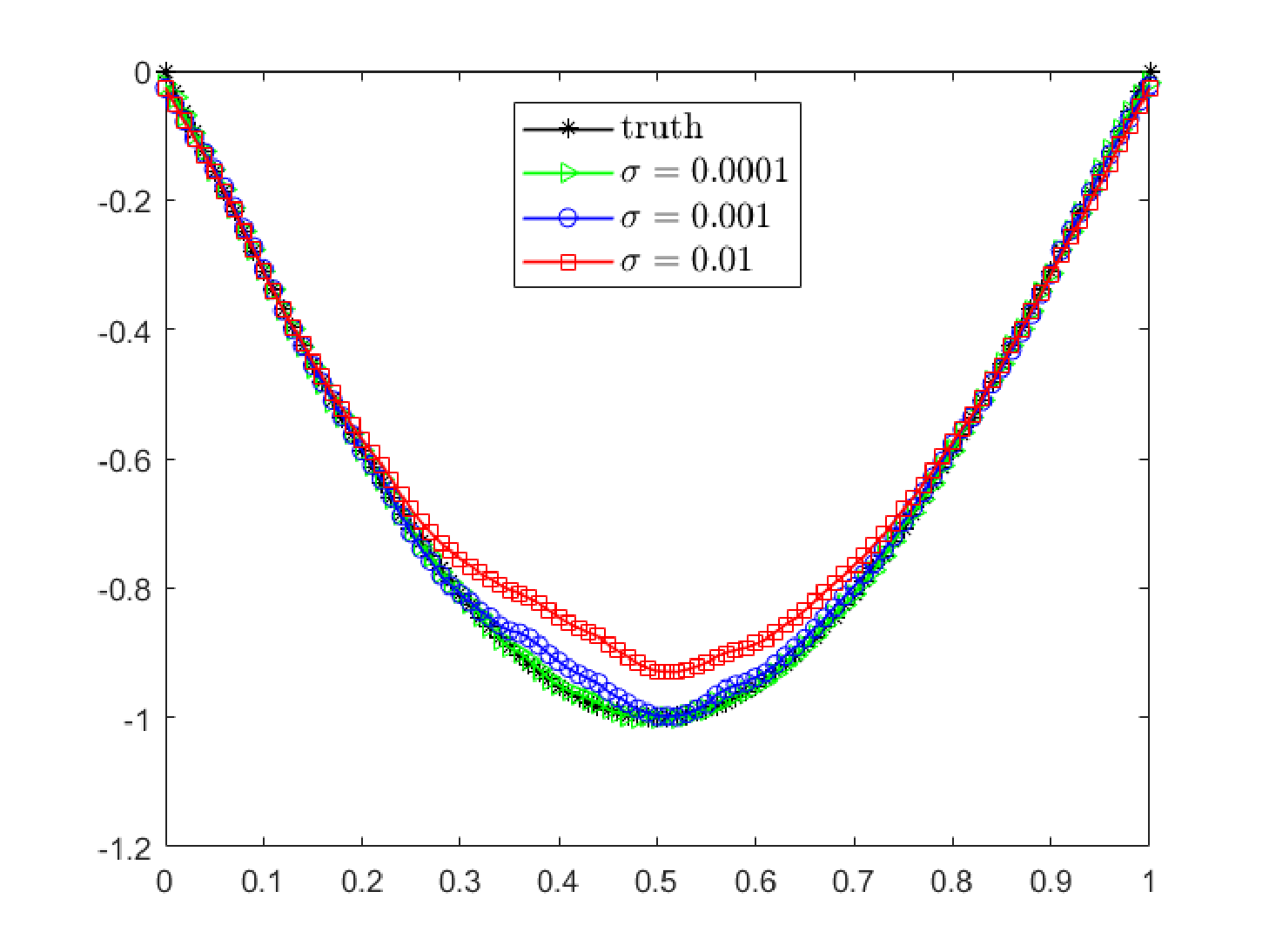}
\subcaption{Reconstruction of $\rho_2(t)$}\label{fig:recon_g2_nonzero}
\end{minipage}
\caption{Reconstruction results with $\det\bm G(x_0)\ne0$ under different noise levels}\label{fig4}
\end{figure}
\end{exa}

Based on the observations from the above examples, in the subsequent numerical tests we comply with the non-degeneracy condition $\det\bm G(x_0)\ne0$, ensuring full identifiability of all components of $\bm\rho(t)$.

To further assess the algorithm, we analyze convergence and stability for this identifiable case. Figure~\ref{fig8} shows the logarithmic relative errors and residual norms for $\rho_1(t)$ and $\rho_2(t)$ under various noise levels.
\begin{figure}[htbp]\centering
\begin{minipage}[b]{0.45\textwidth}\centering
\includegraphics[width=\linewidth]{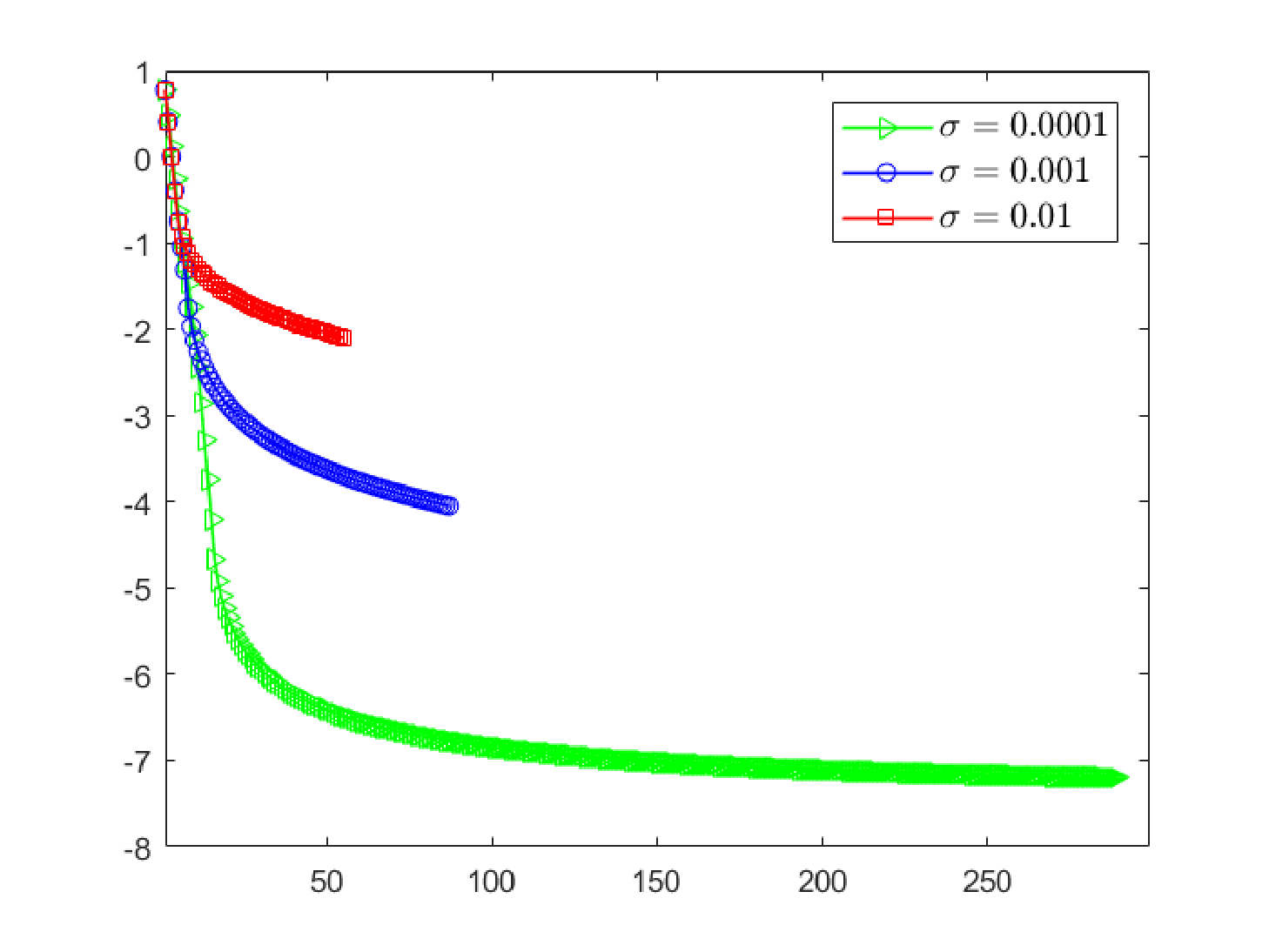}
\subcaption{$\log_2(e_{\rho_1}^n)$}\label{fig8a}
\end{minipage}
\hfill
\begin{minipage}[b]{0.45\textwidth}\centering
\includegraphics[width=\linewidth]{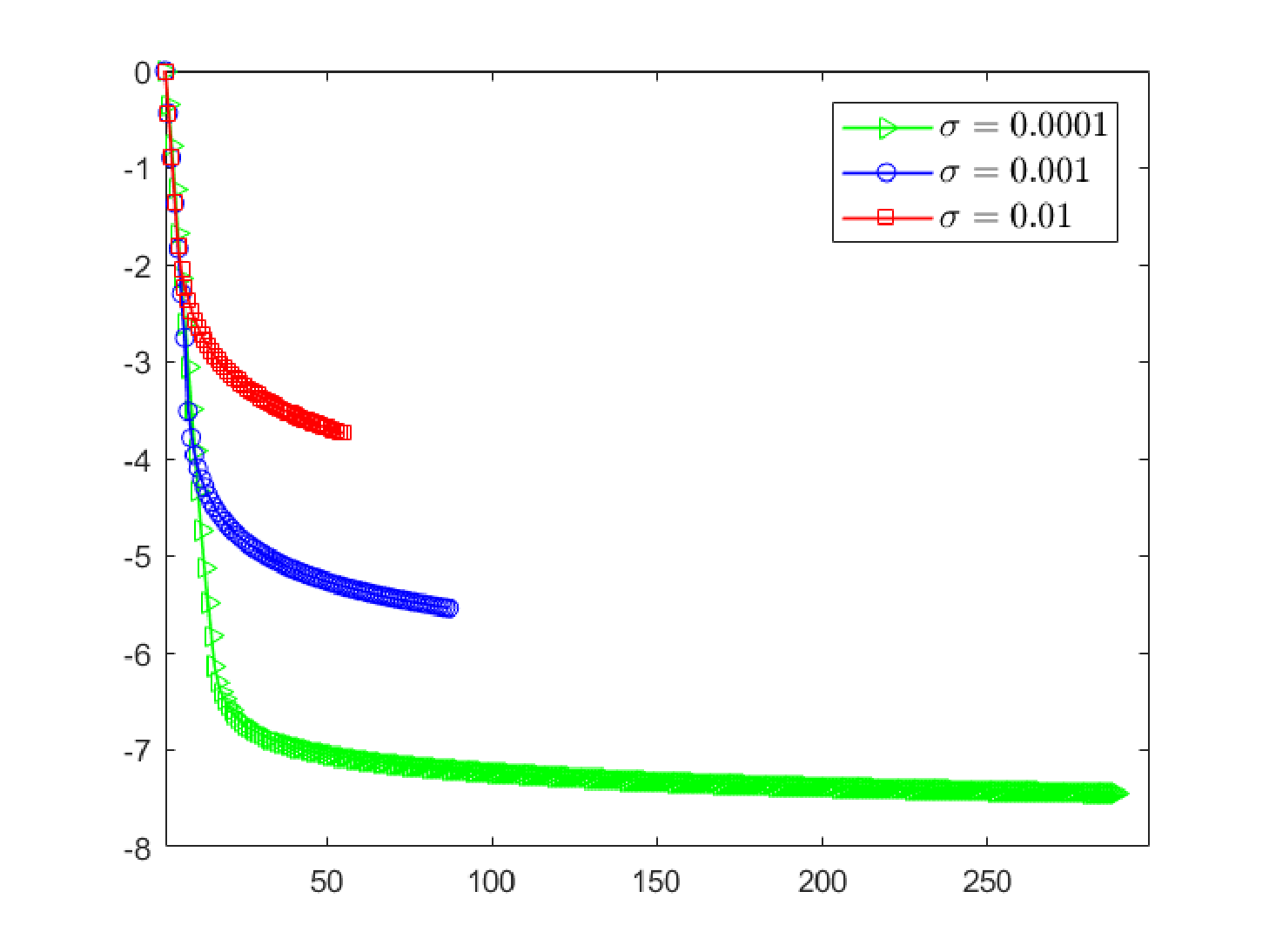}
\subcaption{$\log_2(e_{\rho_2}^n)$}\label{fig8b}
\end{minipage}
\vspace{0.5em}

\begin{minipage}[b]{0.45\textwidth}\centering
\includegraphics[width=\linewidth]{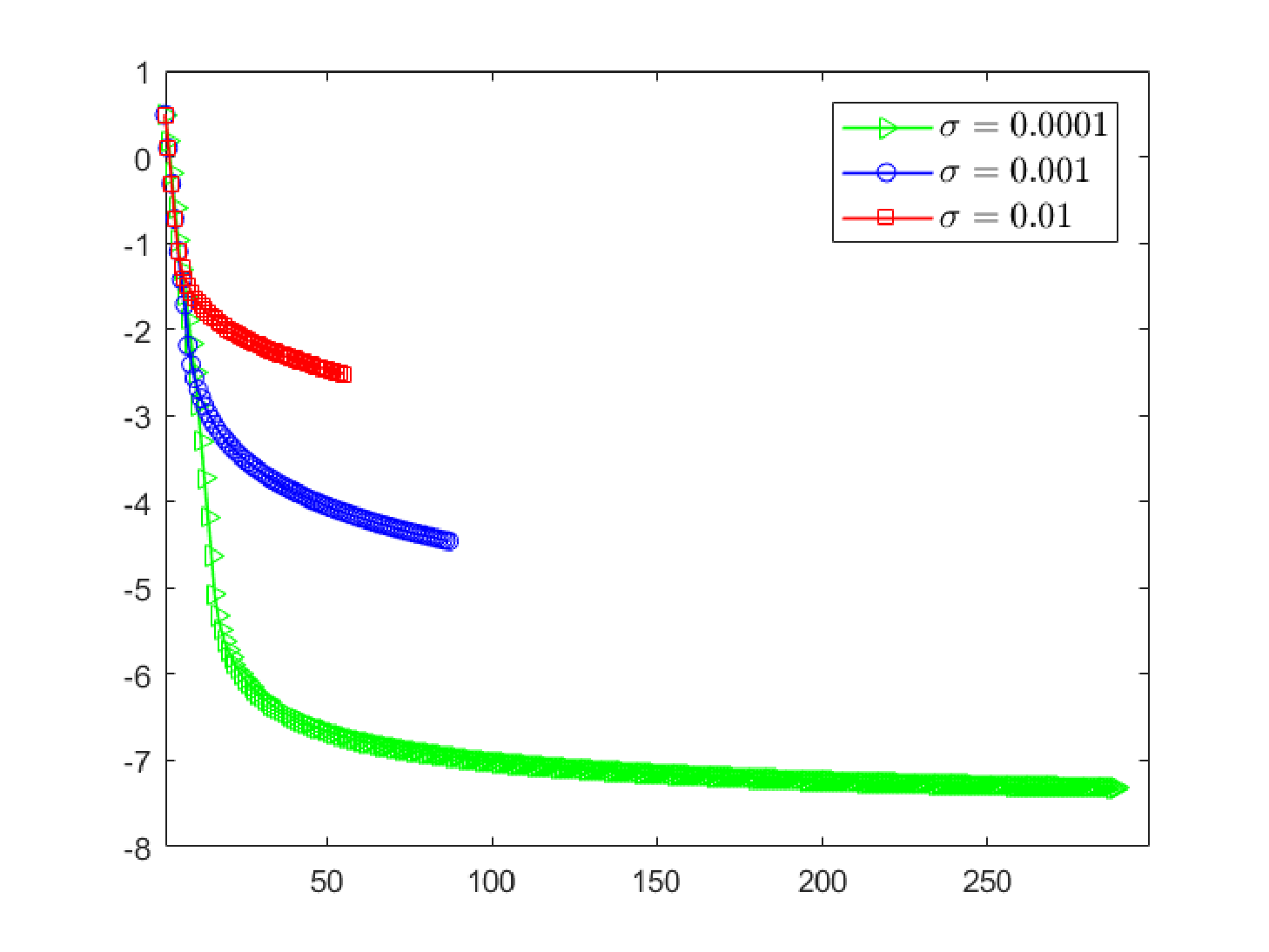}
\subcaption{$\log_2(E_n)$}\label{fig8c}
\end{minipage}
\hfill
\begin{minipage}[b]{0.45\textwidth}\centering
\includegraphics[width=\linewidth]{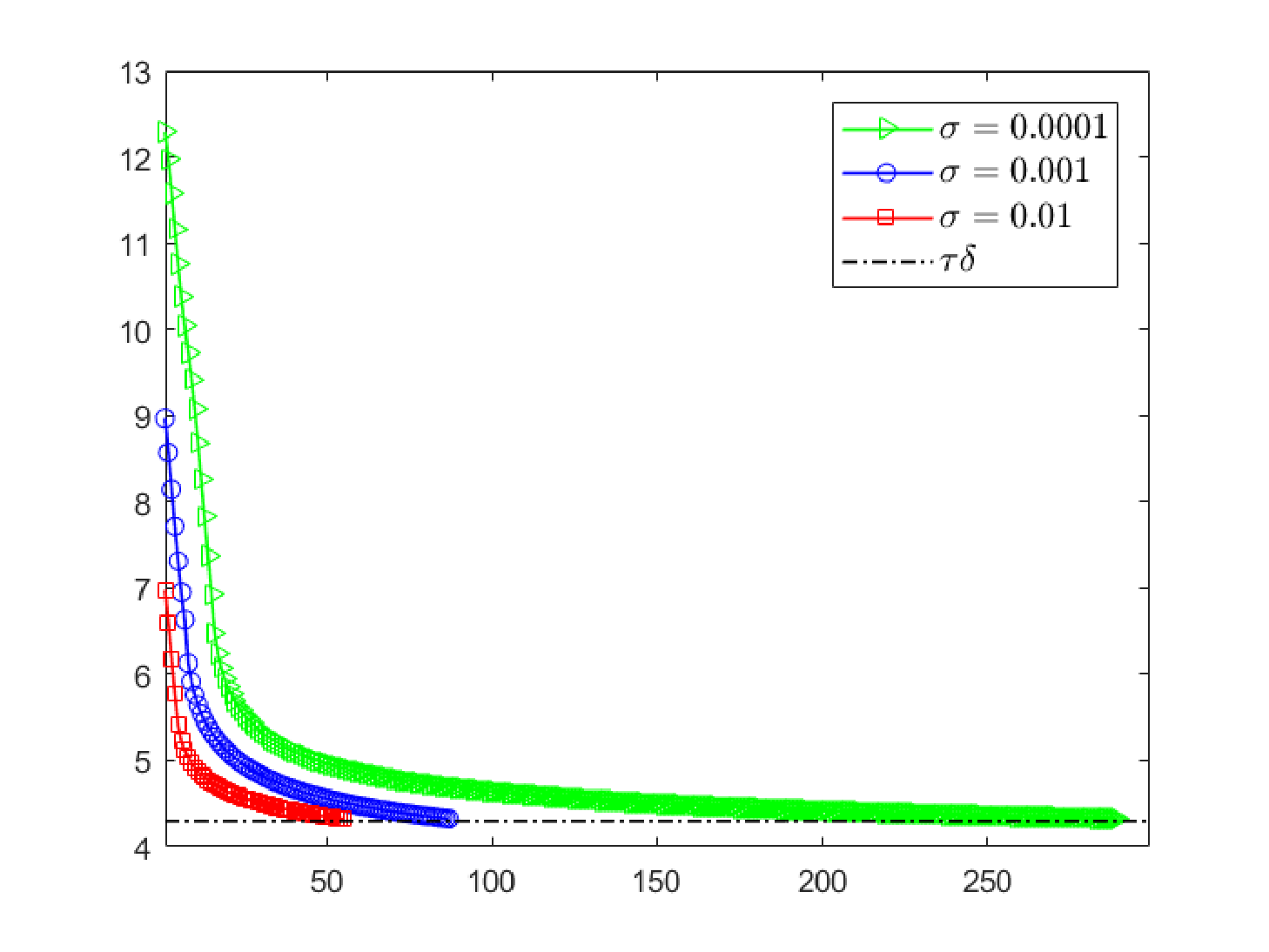}
\subcaption{$\log_2(R_n)$}\label{fig8d}
\end{minipage}
\caption{Logarithmic relative errors and residuals for varying standard deviations: $\sigma=0.0001$, $\sigma=0.001$, and $\sigma=0.01$ associated to $\rho_1(t)$ and $\rho_2(t)$ reconstructed in Example \ref{ex5.2}}\label{fig8}
\end{figure}

The results indicate monotone decay of the residuals and decreasing errors with reduced noise, confirming that the IREKM algorithm converges reliably and provides stable reconstructions under the identifiability condition.


\subsection{Measurement influence under standard identifiability}\label{sub5.2}

The aim of this subsection is to examine how the type of available measurement data influences the reconstruction of the temporal source terms. The fractional orders are fixed as $\al_1=0.7,\al_2=0.4$ and the prior mean functions are still taken as $m_1(t)=1,m_2(t)=0$. The spatial source components and the observation point are chosen as
\[
g_1(x)=\e^x,\quad g_2(x)=2+x,\quad x_0=0.7
\]
respectively, which obviously ensures $\det\bm G(x_0)\ne0$ and thus the full identifiability of the temporal components. The exact temporal sources are chosen
\[
\rho_1(t)=\begin{cases}
1.5, & 0.4\le t<0.6,\\
1, & \mbox{otherwise},
\end{cases}\quad\rho_2(t)=(t-t^2)\,\e^{-t^2}.
\]
Although Theorem \ref{thm-Lip} requires the observation of both components for the theoretical stability, here we consider the following three observation settings in order to assess the influence of the measurement configuration:
\begin{itemize}
\item{\bf Case 1:} Single measurement of $u_1$, i.e., $y(t)=u_1(x_0,t)$.
\item{\bf Case 2:} Single measurement of $u_2$, i.e., $y(t)=u_2(x_0,t)$.
\item{\bf Case 3:} Full measurement, i.e., $\bm y(t)=(u_1(x_0,t),u_2(x_0,t))^\T$.
\end{itemize}

Figure~\ref{fig5} presents the reconstruction results under noise level $\sigma=0.0001$ for the three measurement configurations, from which we clearly witness the dominating influence of the amount and component of observation to the numerical performance of the reconstruction. In Case 1, only $u_1(x_0,t)$ is observed and the reconstruction successfully identifies $\rho_1(t)$, while $\rho_2(t)$ cannot be reliably recovered. The situation in Case 2 is highly analogous. In contrast, Case 3 uses the full measurement of $\bm u(x_0,t)$ and hence achieves accurate and stable reconstruction of both $\rho_1(t)$ and $\rho_2(t)$.
\begin{figure}[htbp]\centering
\begin{minipage}[b]{0.31\textwidth}\centering
\includegraphics[width=\linewidth]{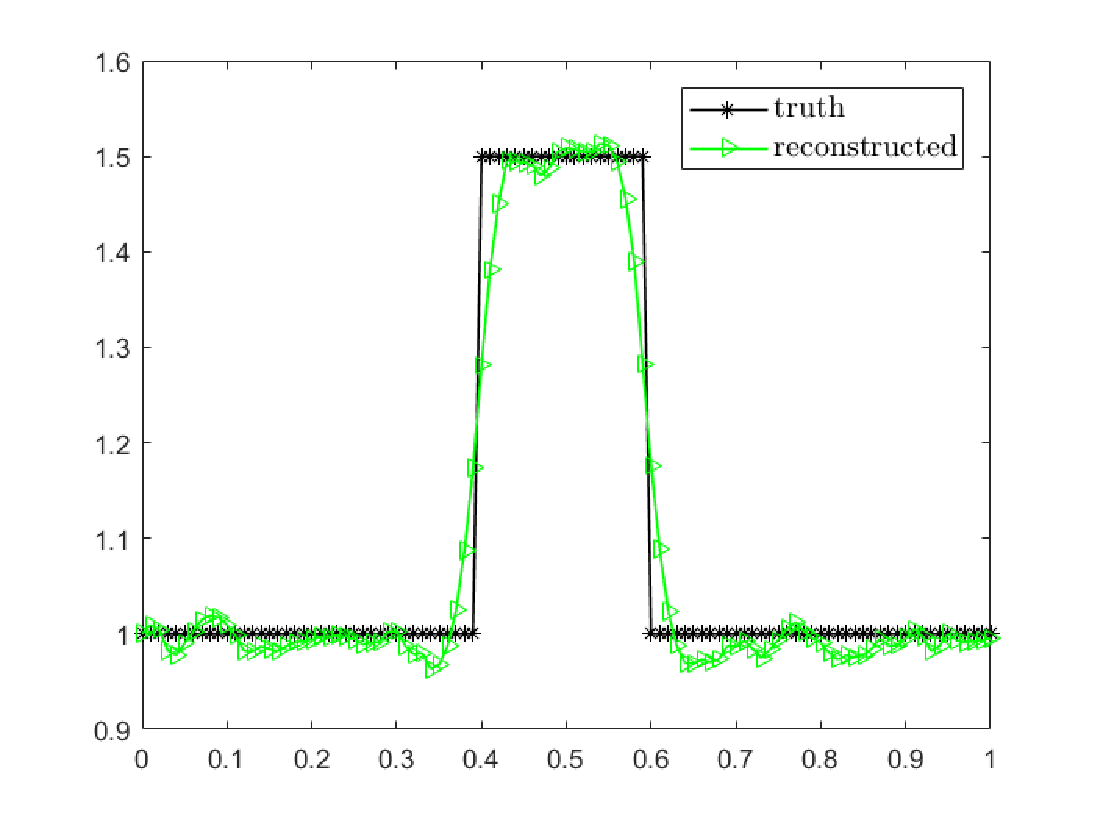}
\subcaption{}\label{fig5a}
\end{minipage}
\hfill
\begin{minipage}[b]{0.31\textwidth}\centering
\includegraphics[width=\linewidth]{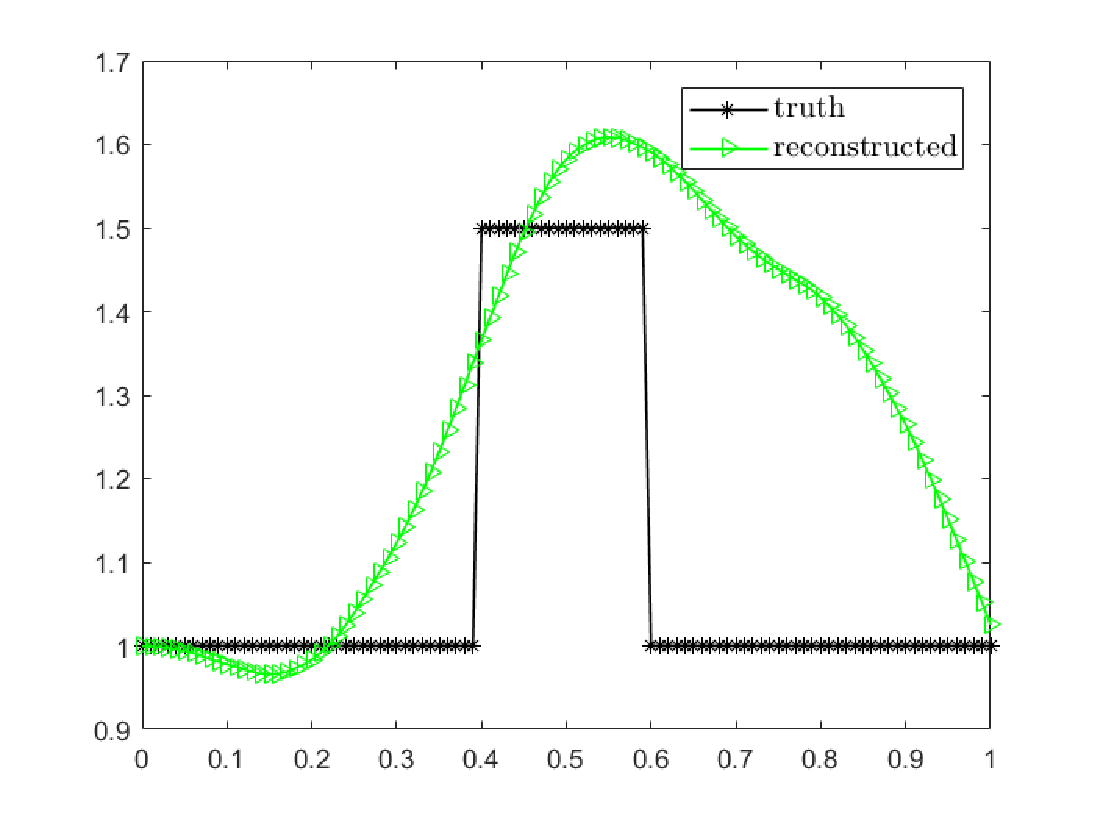}
\subcaption{}\label{fig5b}
\end{minipage}
\hfill
\begin{minipage}[b]{0.31\textwidth}\centering
\includegraphics[width=\linewidth]{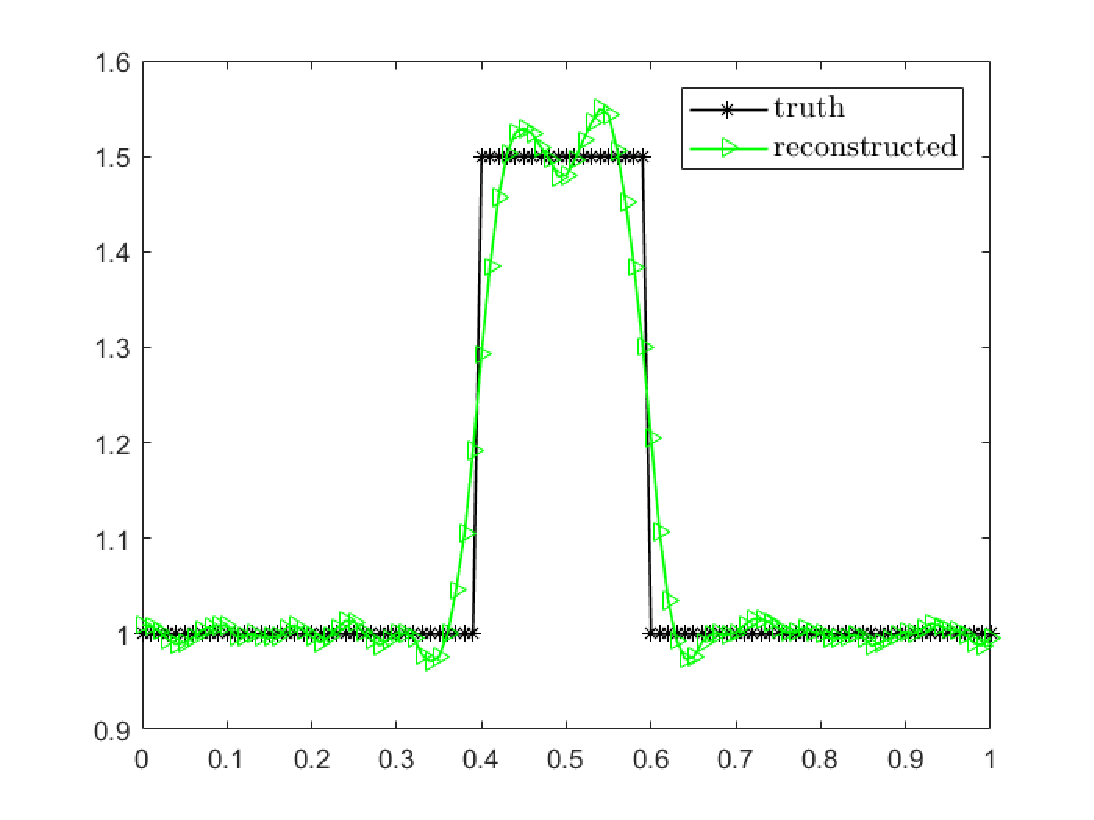}
\subcaption{}\label{fig5c}
\end{minipage}
\vspace{0.5em}

\begin{minipage}[b]{0.31\textwidth}\centering
\includegraphics[width=\linewidth]{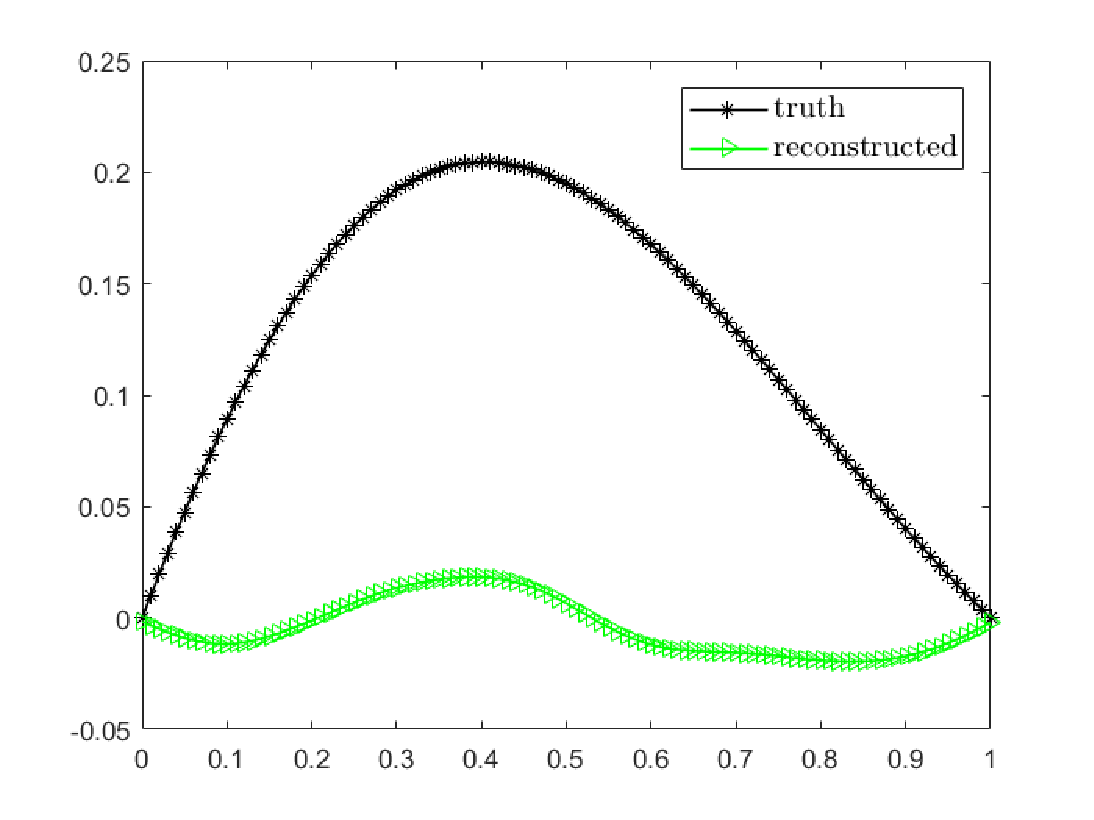}
\subcaption{}\label{fig5d}
\end{minipage}
\hfill
\begin{minipage}[b]{0.31\textwidth}\centering
\includegraphics[width=\linewidth]{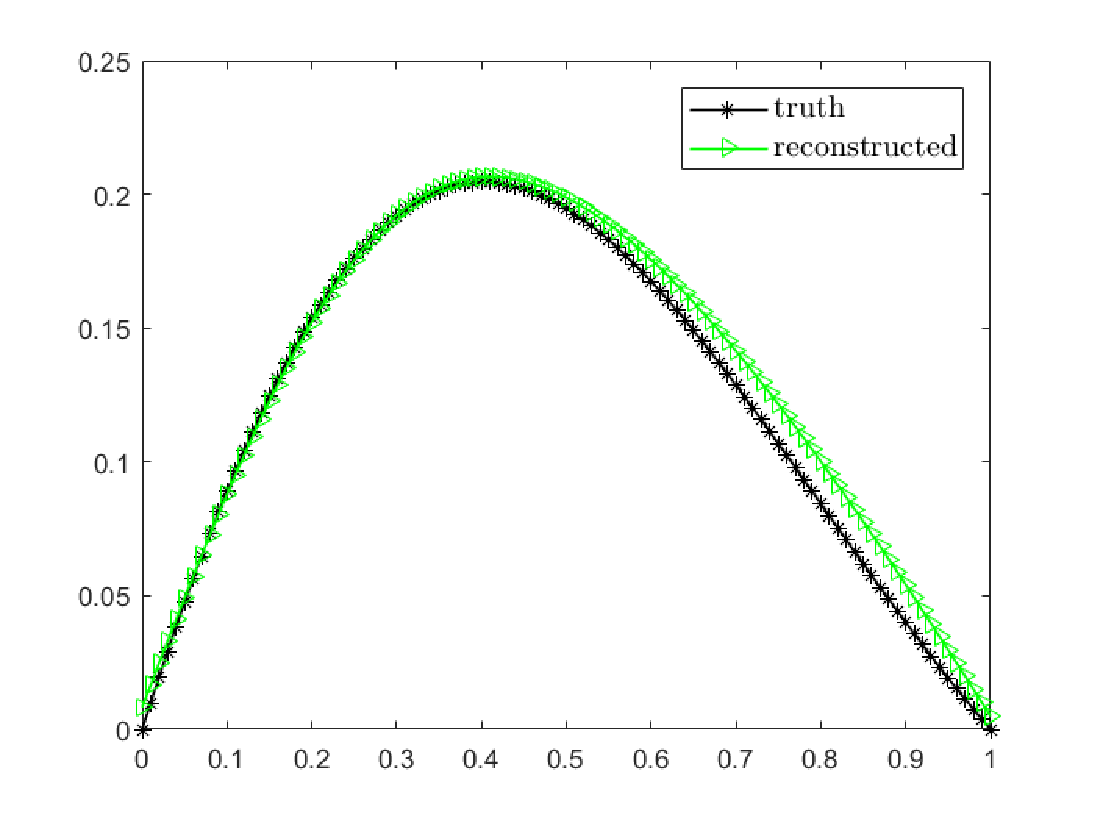}
\subcaption{}\label{fig5e}
\end{minipage}
\hfill
\begin{minipage}[b]{0.31\textwidth}\centering
\includegraphics[width=\linewidth]{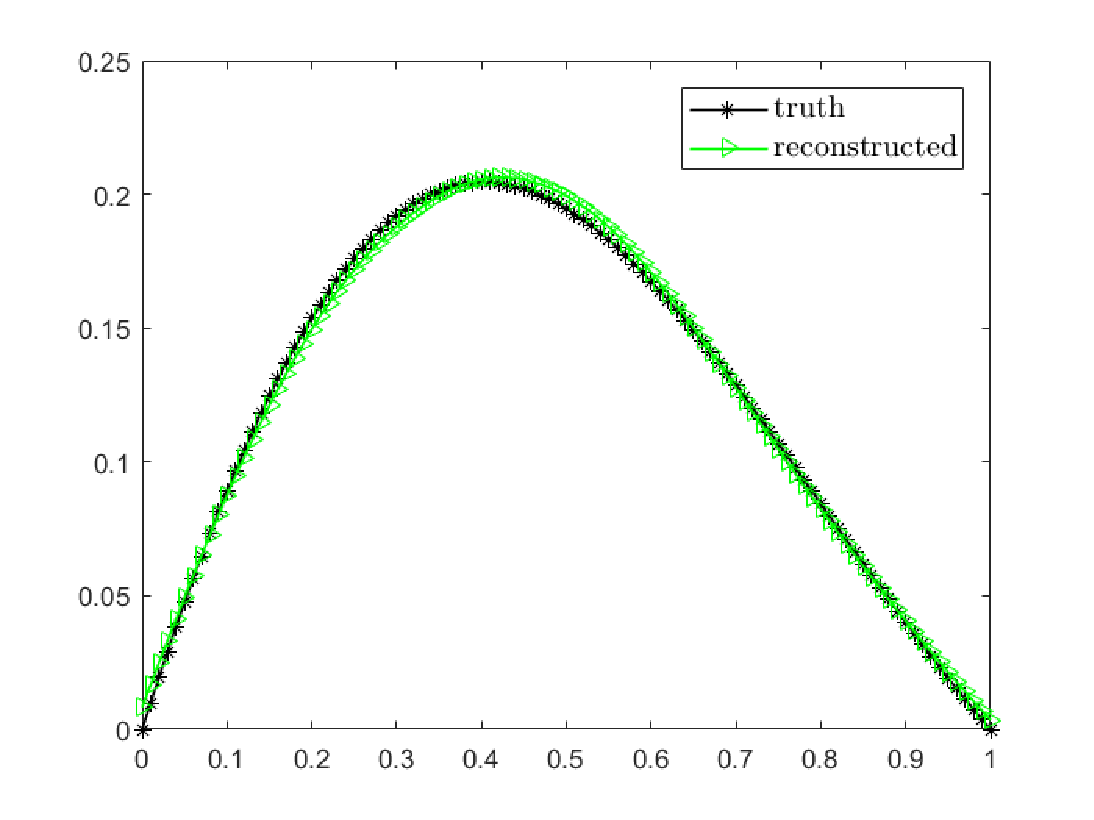}
\subcaption{}\label{fig5f}
\end{minipage}
\caption{Reconstruction results for $\rho_1(t)$ (top row) and $\rho_2(t)$ (bottom row) with noise level $\sigma=0.0001$. From left to right, the columns correspond to Case 1 (single measurement $u_1$), Case 2 (single measurement $u_2$), and Case 3 (full measurement), respectively}\label{fig5}
\end{figure}

These observations indicate that, although $\det\bm G(x_0)\ne0$ ensures theoretical identifiability, partial measurements may still lead to incomplete recovery of the temporal components. In other words, full measurement data significantly improves the stability and reliability of the inverse problem.


\subsection{Reconstruction by observing a single component}\label{sub5.3}

In the previous subsections, we mainly followed the line of Theorem \ref{thm-Lip} to recover the unknown by observing all components of the solution $\bm u$ at the observation point, and confirmed the pivotal role of the non-degeneracy condition $\det\bm G(x_0)\ne0$. We now investigate a different scenario motivated by the uniqueness result stated in Theorem~\ref{thm-ISP}. This theorem shows that, under some positivity condition on the spatial components of the source term as well as a specific structural condition on the unknown temporal ones, $\bm\rho$ can be uniquely determined from the single point observation of an arbitrary component of the solution.

More precisely, we recall that the key assumption \eqref{eq-cond-mu} in Theorem~\ref{thm-ISP} requires the existence of $\mu\in W^{1,\infty}(0,T)$ with $\mu(0)=0$ such that the unknown satisfies
\[
J^{\al_k}\rho_k=\mu,\quad k=1,\dots,K.
\]
To validate this property numerically, we retain the same input data and parameters as in the previous subsection, and construct temporal source terms that satisfy \eqref{eq-cond-mu}. Specifically, we choose
\[
\rho_1(t)=\f2{\Gamma(2.3)}\,t^{1.3},\quad\rho_2(t)=\f2{\Gamma(2.6)}\,t^{1.6}
\]
with fractional orders $\al_1=0.7$ and $\al_2=0.4$. Then one easily verifies that \eqref{eq-cond-mu} is satisfied with $\mu(t)=t^2$.

Figure~\ref{fig6} presents the reconstruction results under noise level $\sigma=0.0001$ by observing only $u_1(x_0,t)$ or $u_2(x_0,t)$. One can observe that both components $\rho_1(t)$ and $\rho_2(t)$ are accurately reconstructed by observing a single component. The reconstructed profiles closely match the exact solutions, and the effect of the noise level $\sigma=0.0001$ remains limited, indicating good numerical stability of the proposed algorithm.
\begin{figure}[htbp]\centering
\begin{minipage}[b]{0.45\textwidth}\centering
\includegraphics[width=\linewidth]{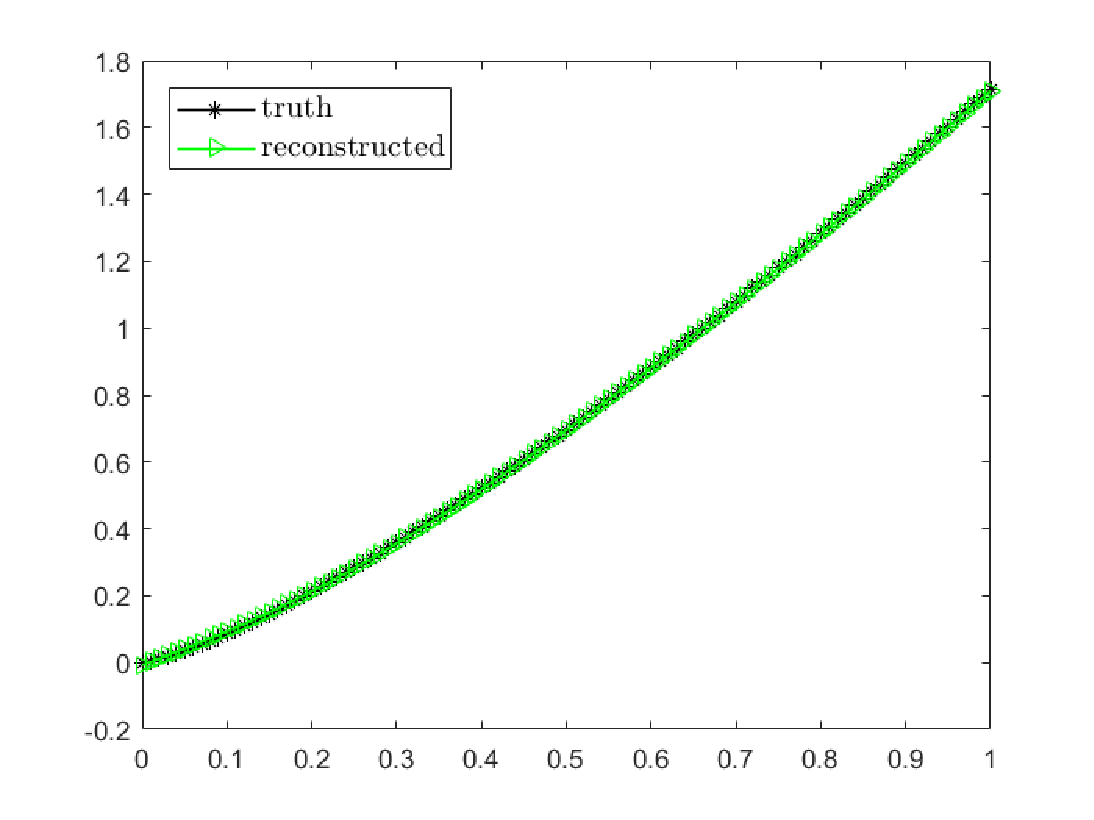}
\subcaption{Reconstruction of $\rho_1(t)$}\label{fig6a}
\end{minipage}
\hfill
\begin{minipage}[b]{0.45\textwidth}\centering
\includegraphics[width=\linewidth]{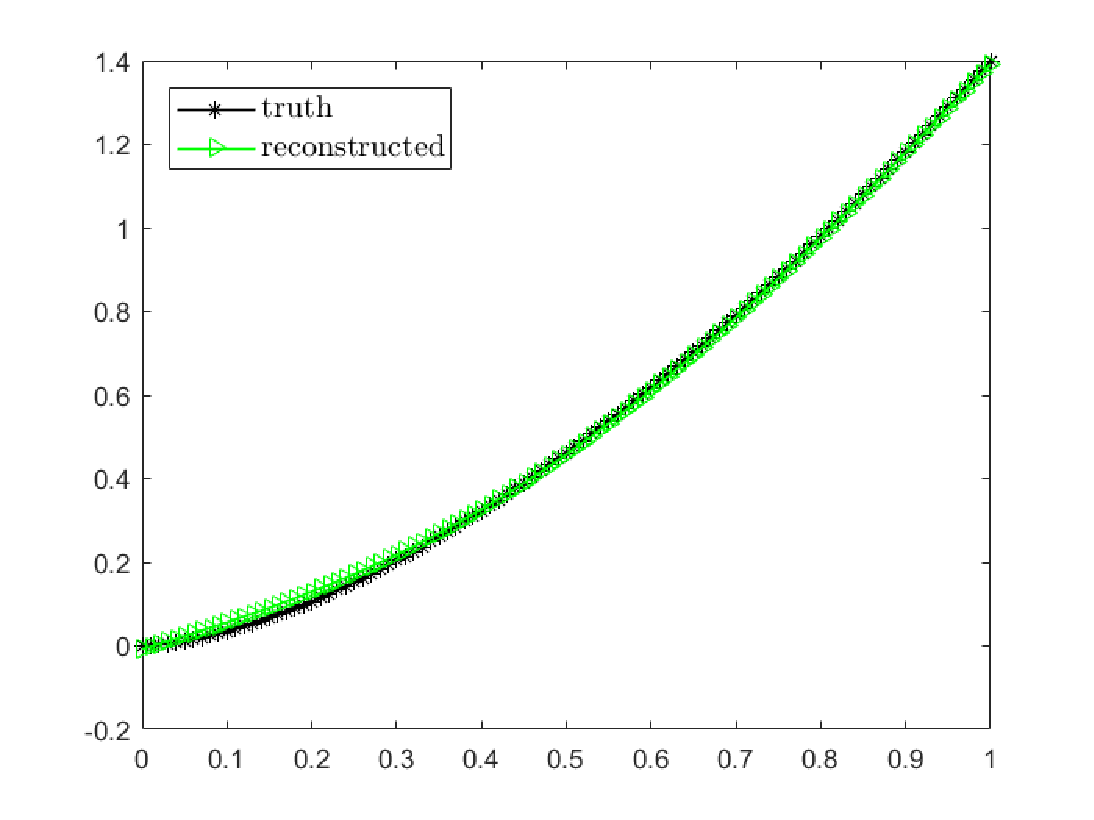}
\subcaption{Reconstruction of $\rho_2(t)$}\label{fig6b}
\end{minipage}
\vspace{0.5em}

\begin{minipage}[b]{0.45\textwidth}\centering
\includegraphics[width=\linewidth]{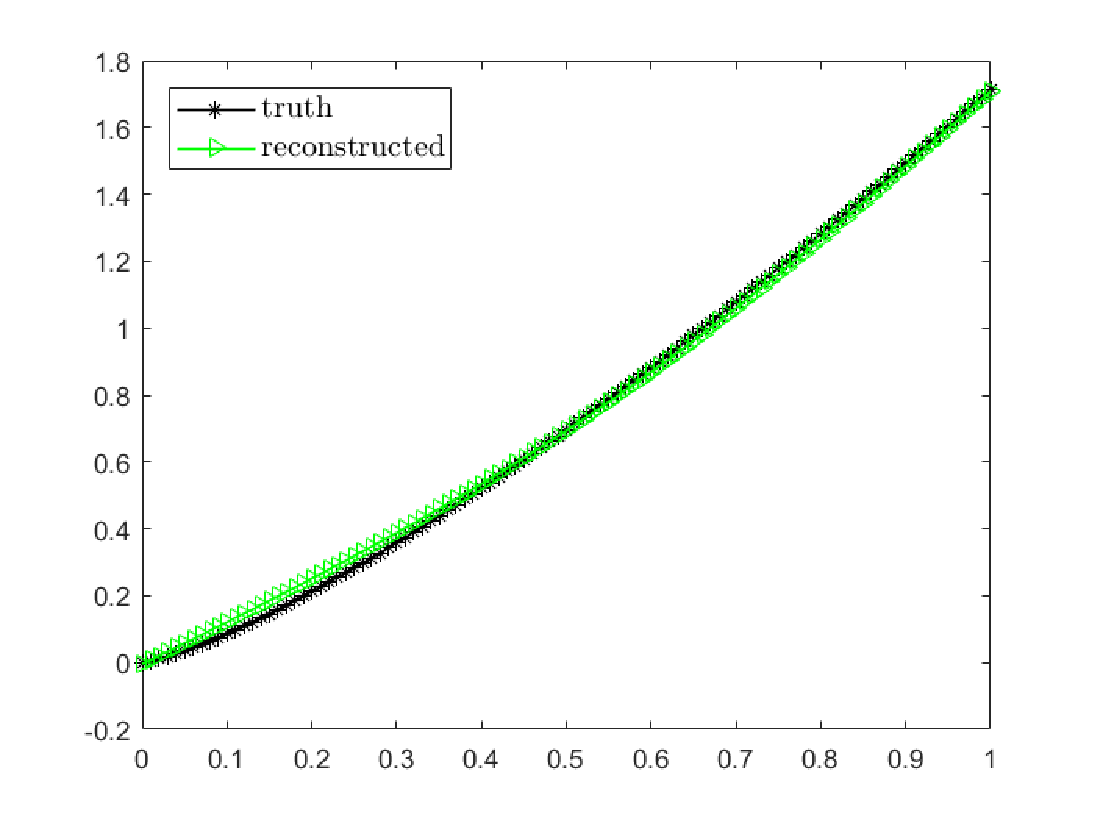}
\subcaption{Reconstruction of $\rho_1(t)$}\label{fig6c}
\end{minipage}
\hfill
\begin{minipage}[b]{0.45\textwidth}\centering
\includegraphics[width=\linewidth]{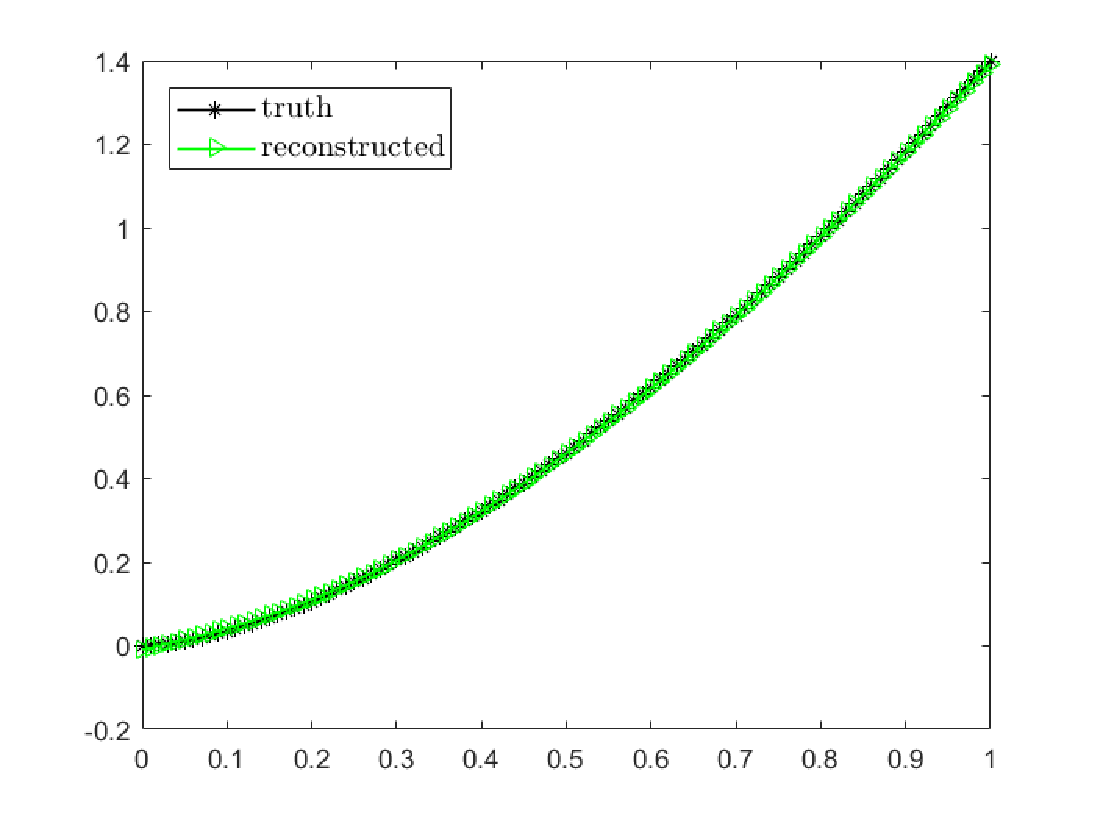}
\subcaption{Reconstruction of $\rho_2(t)$}\label{fig6d}
\end{minipage}
\caption{Reconstruction results for $\rho_1(t)$ (left column) and $\rho_2(t)$ (right column) with noise level $\sigma=0.0001$. The observation data is $u_1(x_0,t)$ in the top row, and $u_2(x_0,t)$ in the bottom row}\label{fig6}
\end{figure}

These results confirm the theoretical prediction of Theorem~\ref{thm-ISP}: under the restrictive compatibility condition \eqref{eq-cond-mu}, the inverse problem becomes uniquely solvable from a single observation. In contrast to the previous subsection, where single measurements led to partial recovery, the structural condition ensures sufficient information in each component to determine the entire unknown $\bm\rho$.

Therefore, this test provides numerical validation of the theoretical uniqueness result and highlights the crucial role played by the fractional structural condition \eqref{eq-cond-mu} in enabling reliable multi-component reconstruction from limited measurement data.


\subsection{Scalability with respect to the number of components}\label{sub5.4}

In this final numerical experiment, we investigate the scalability of the proposed IREKM algorithm with respect to the number of components in the unknown temporal source. More precisely, we consider coupled time-fractional systems with $K=3$ and $K=4$, and examine whether the reconstruction quality remains stable as the number of unknown functions in the inverse problem increases.

To this end, we consider the following two cases.
\begin{itemize}
\item{\bf Case 1:} $K=3$. The fractional orders and the matrix representing the spatial source components are chosen as
\[
\bm\al=(0.8,0.5,0.3)^\T,\quad\bm G(x)=\diag(\e^{-x},1+x^2,2+\cos(\pi x))
\]
respectively. The temporal source components to be reconstructed are given by
\[
\rho_1(t)=-t^2+t+1,\quad\rho_2(t)=|2t-1|,\quad\rho_3(t)=\cos(4\pi t).
\]
We take $m_k(t)=1$ for $k=1,2,3$.
\item{\bf Case 2:} $K=4$. We choose
\begin{gather*}
\bm\al=(0.9,0.75,0.4,0.15)^\T,\quad\bm G(x)=\diag(\e^{x},2-x^2,2+\sin(\pi x),4),\\
\rho_1(t)=t\,\e^{1-t^2},\ \rho_2(t)=1-|2t-1|,\ \rho_3(t)=\sin(3\pi t),\ \rho_4(t)=t\cos(4\pi t),
\end{gather*}
and set $m_1(t)=m_4(t)=t$, $m_2(t)=m_3(t)=0$.
\end{itemize}
In both cases, the observation data are collected at the fixed point $x_0=0.35$.

Figures \ref{fig7}--\ref{fig77} present the reconstruction results under different noise levels $\sigma=0.0001$, $\sigma=0.001$ and $\sigma=0.01$ for Case 1 ($K=3$) and Case 2 ($K=4$), respectively.
\begin{figure}[htbp]\centering
\begin{minipage}[b]{0.31\textwidth}\centering
\includegraphics[width=\linewidth]{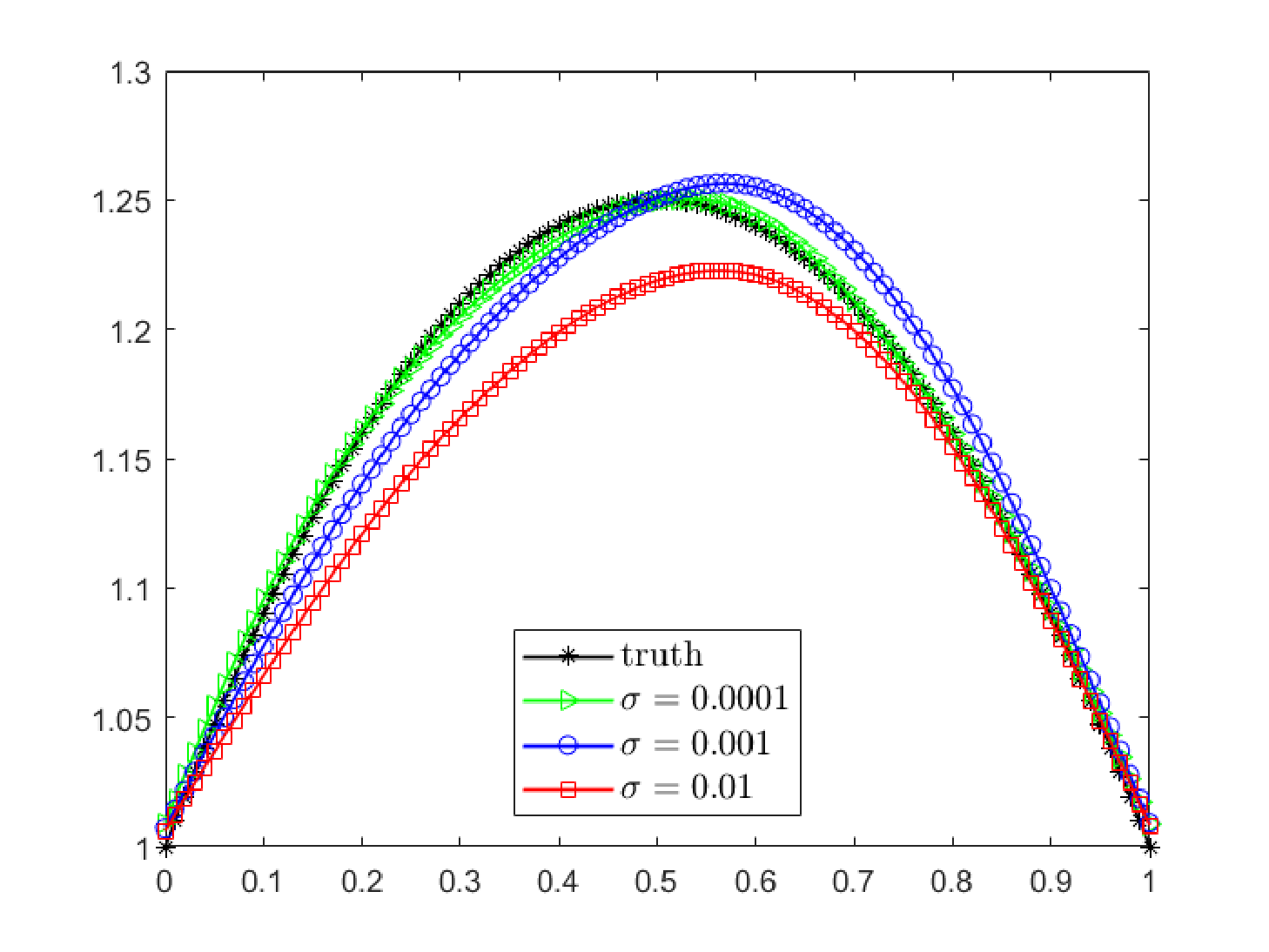}
\subcaption{Reconstruction of $\rho_1(t)$}\label{fig7a}
\end{minipage}
\hfill
\begin{minipage}[b]{0.31\textwidth}\centering
\includegraphics[width=\linewidth]{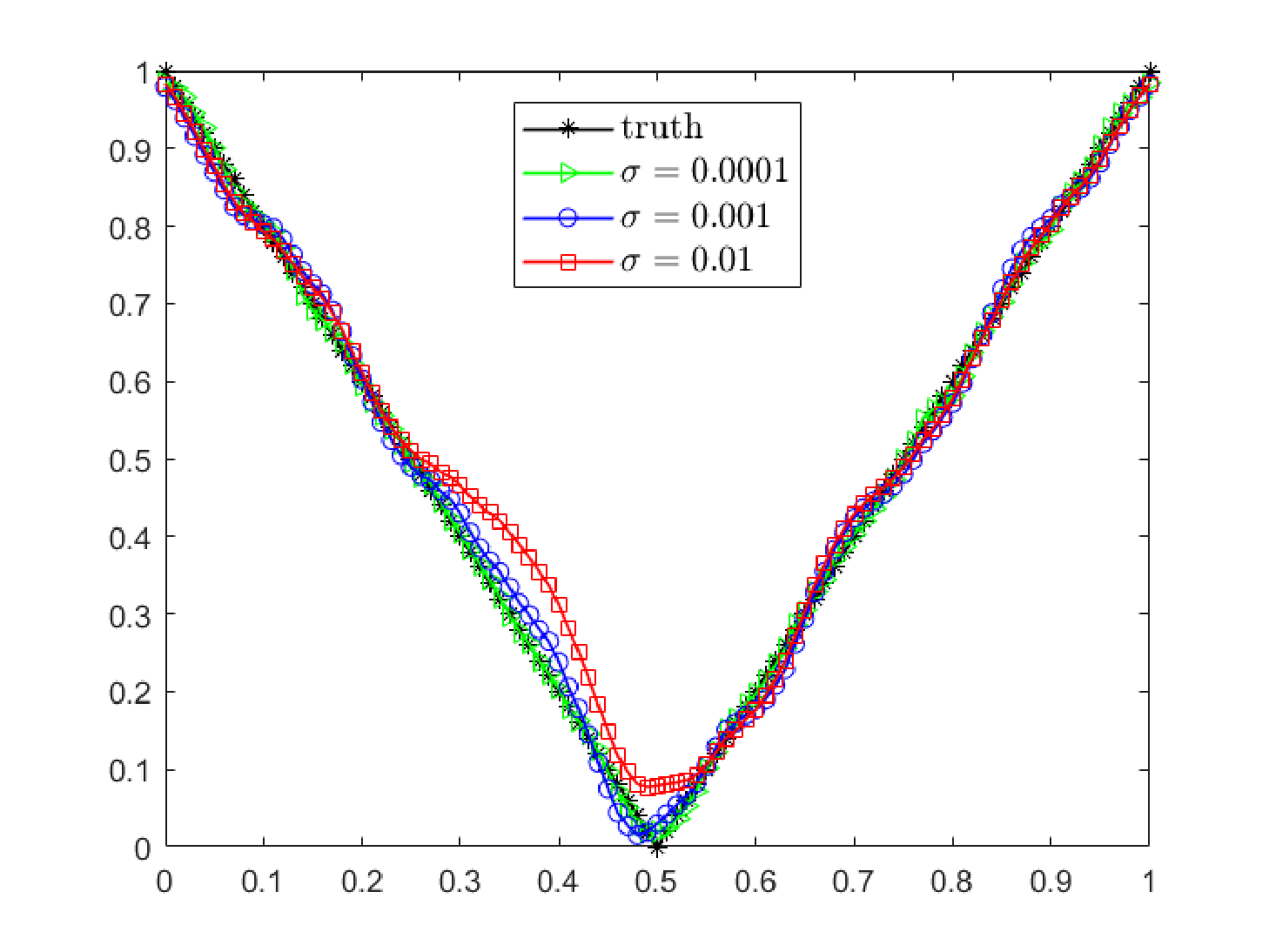}
\subcaption{Reconstruction of $\rho_2(t)$}\label{fig7b}
\end{minipage}
\hfill
\begin{minipage}[b]{0.31\textwidth}\centering
\includegraphics[width=\linewidth]{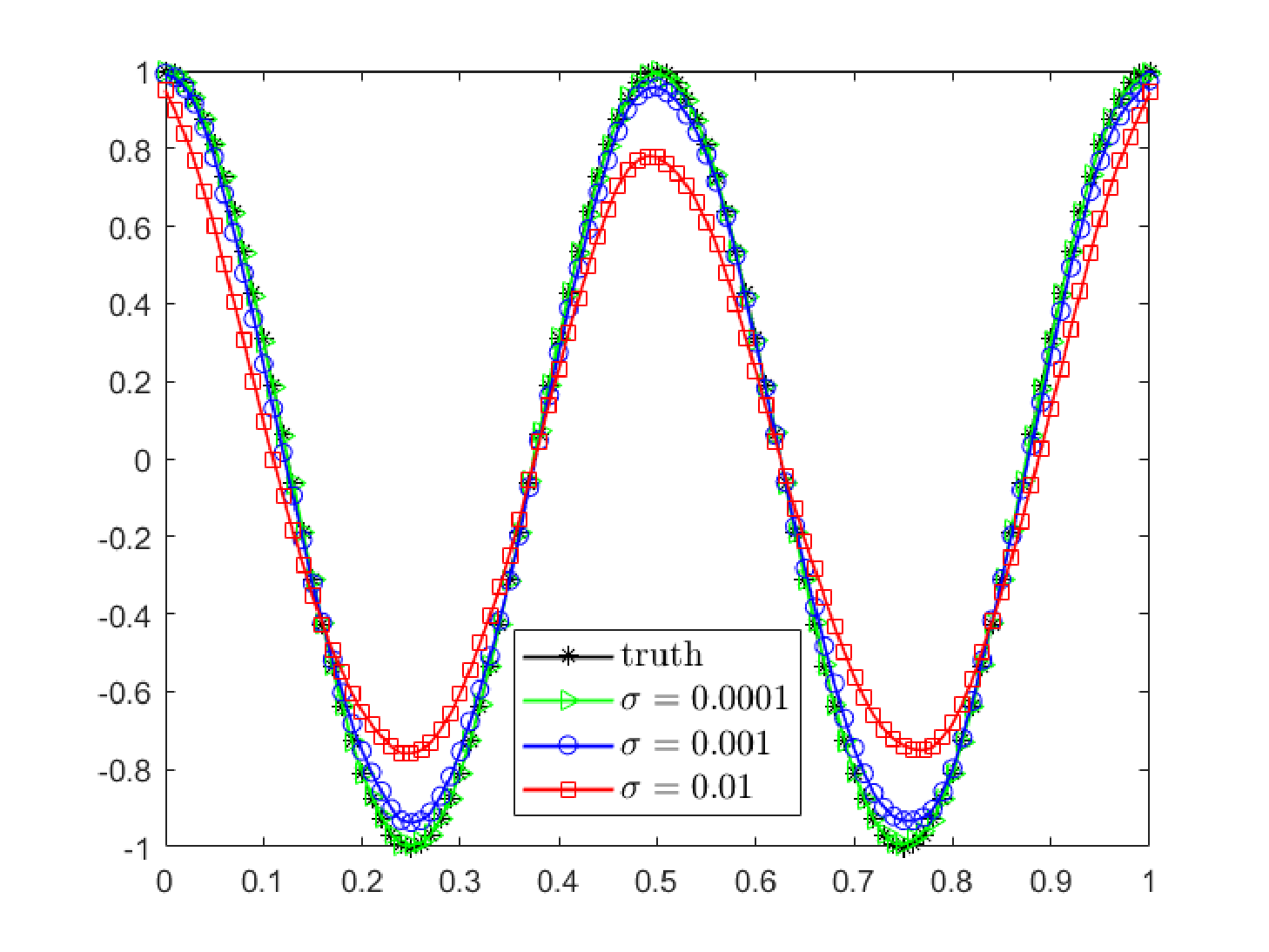}
\subcaption{Reconstruction of $\rho_3(t)$}\label{fig7c}
\end{minipage}
\caption{Reconstruction results for Case 1 ($K=3$) under noise levels $\sigma=0.0001$, $\sigma=0.001$, and $\sigma=0.01$. From left to right: $\rho_1(t)$, $\rho_2(t)$ and $\rho_3(t)$}\label{fig7}
\end{figure}

\begin{figure}[htbp]\centering
\begin{minipage}[b]{0.45\textwidth}\centering
\includegraphics[width=\linewidth]{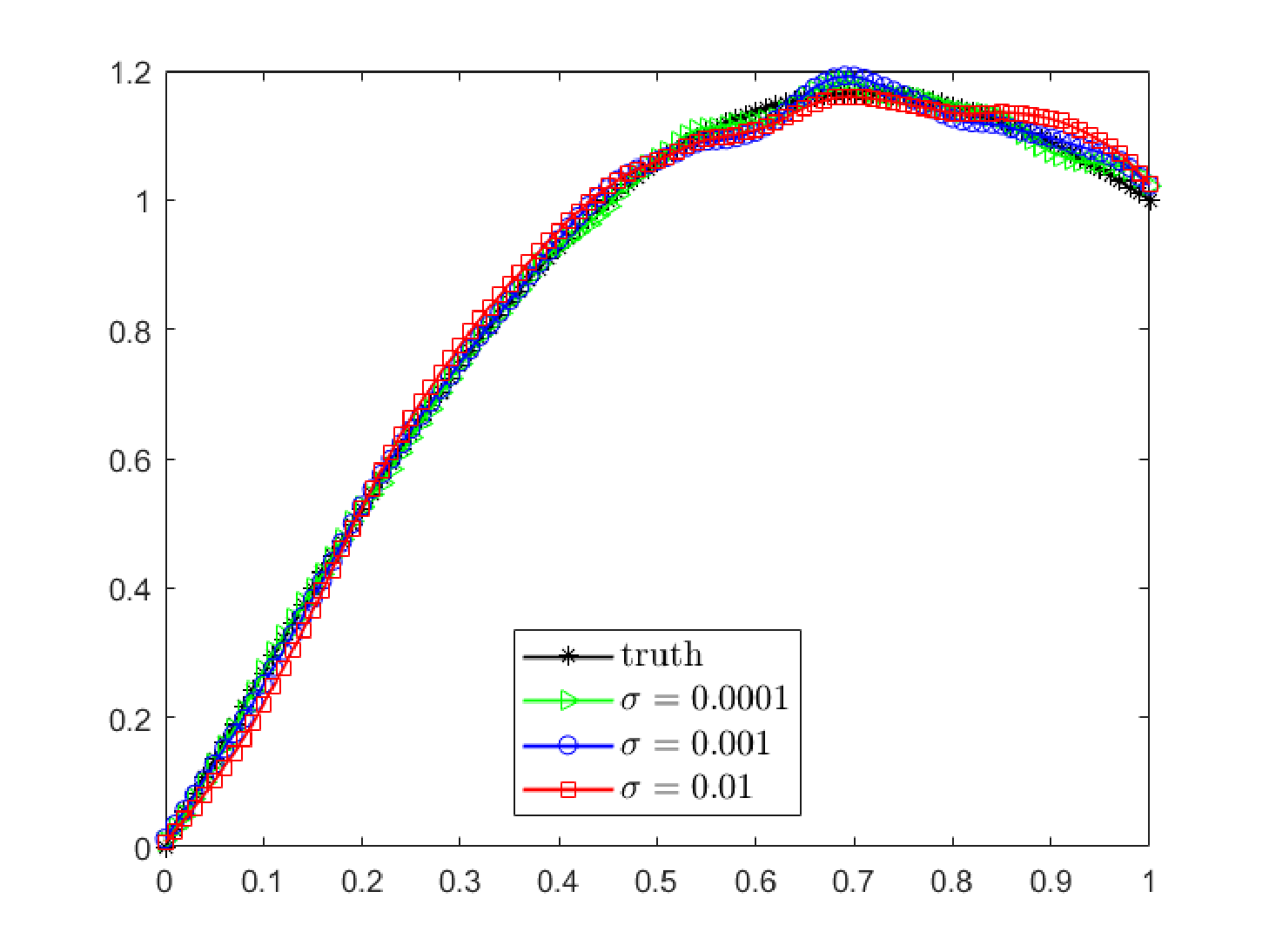}
\subcaption{Reconstruction of $\rho_1(t)$}\label{fig77a}
\end{minipage}
\hfill
\begin{minipage}[b]{0.45\textwidth}\centering
\includegraphics[width=\linewidth]{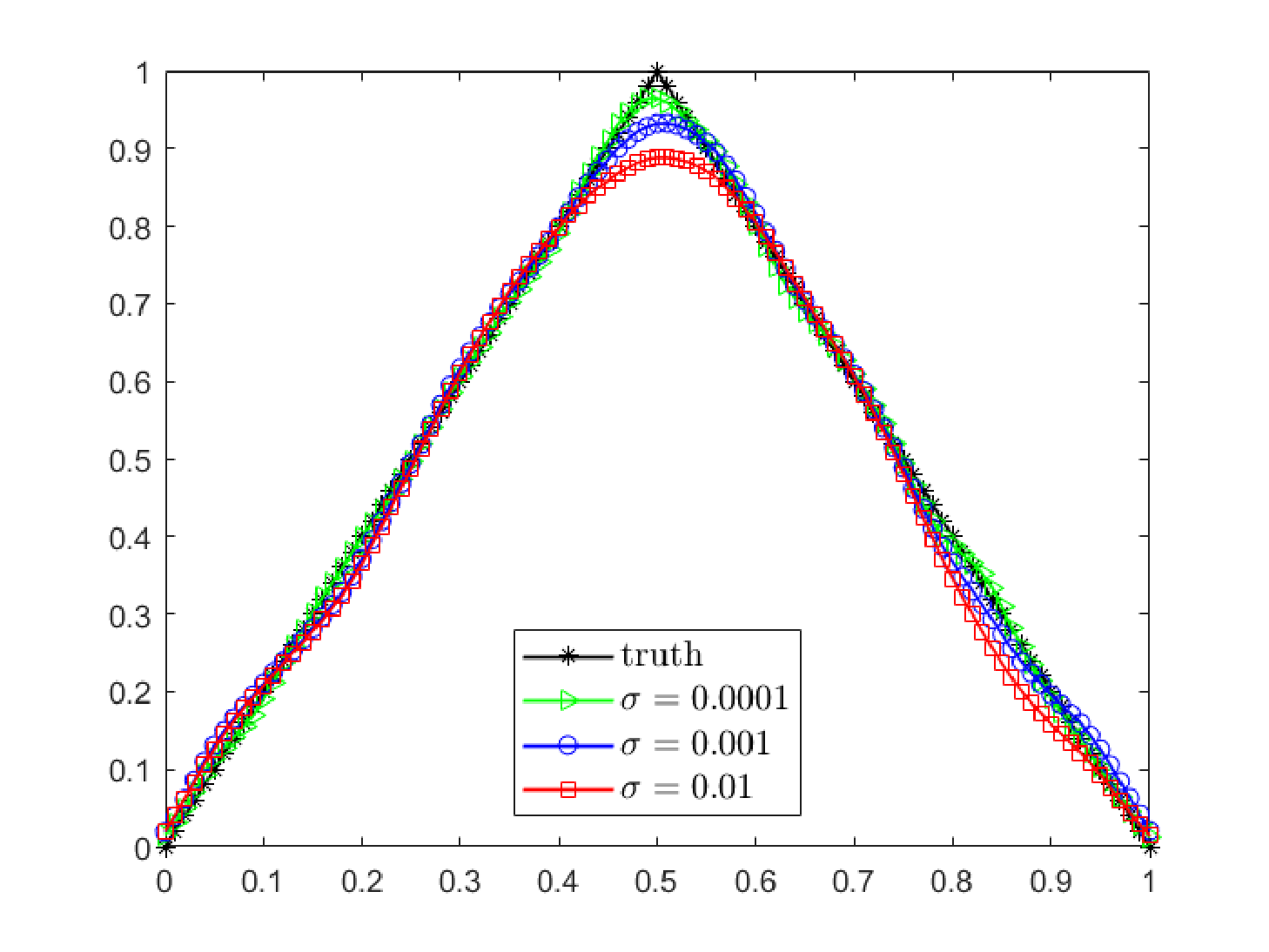}
\subcaption{Reconstruction of $\rho_2(t)$}\label{fig77b}
\end{minipage}
\vspace{0.5em}

\begin{minipage}[b]{0.45\textwidth}\centering
\includegraphics[width=\linewidth]{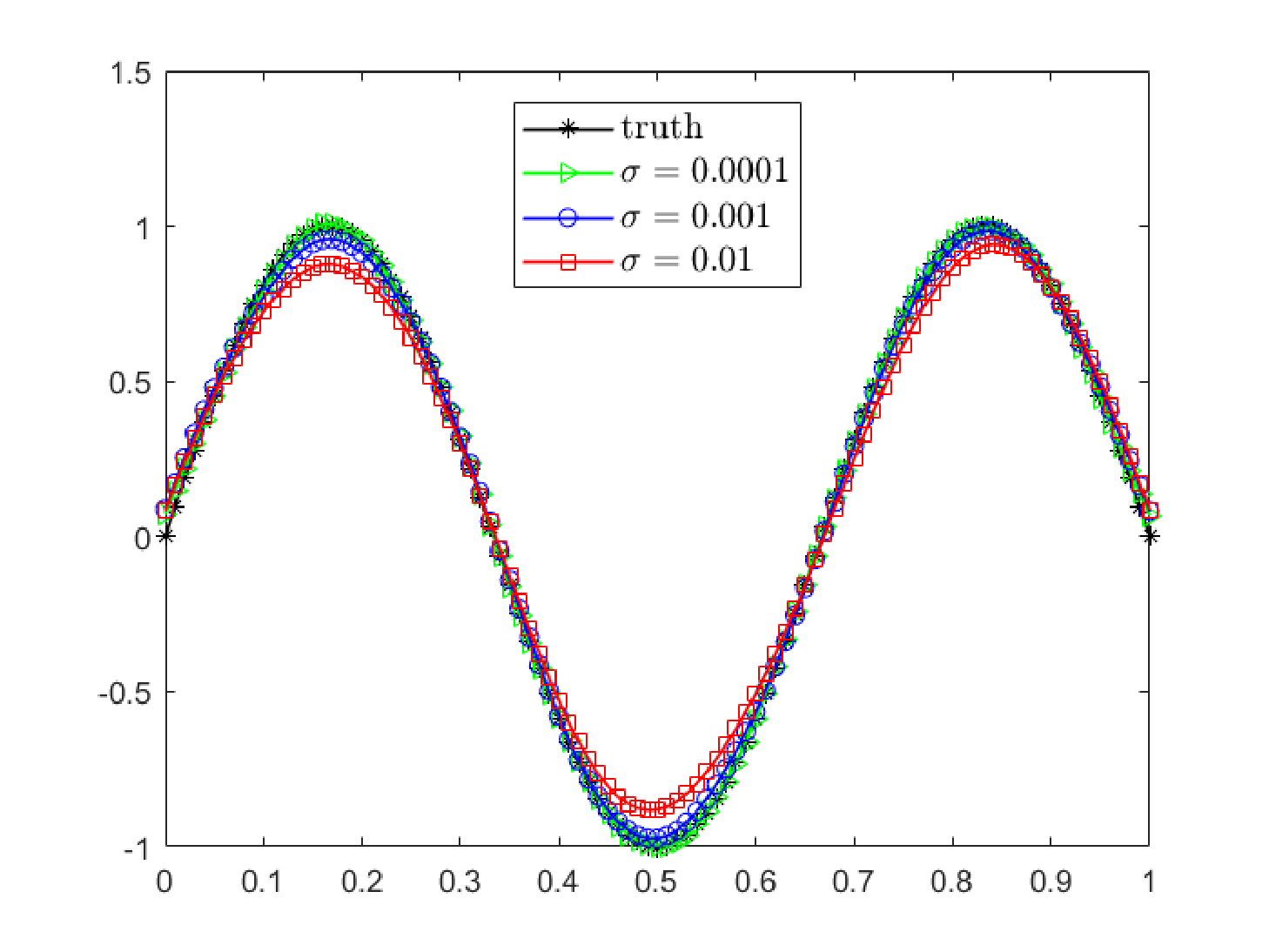}
\subcaption{Reconstruction of $\rho_3(t)$}\label{fig77c}
\end{minipage}
\hfill
\begin{minipage}[b]{0.45\textwidth}\centering
\includegraphics[width=\linewidth]{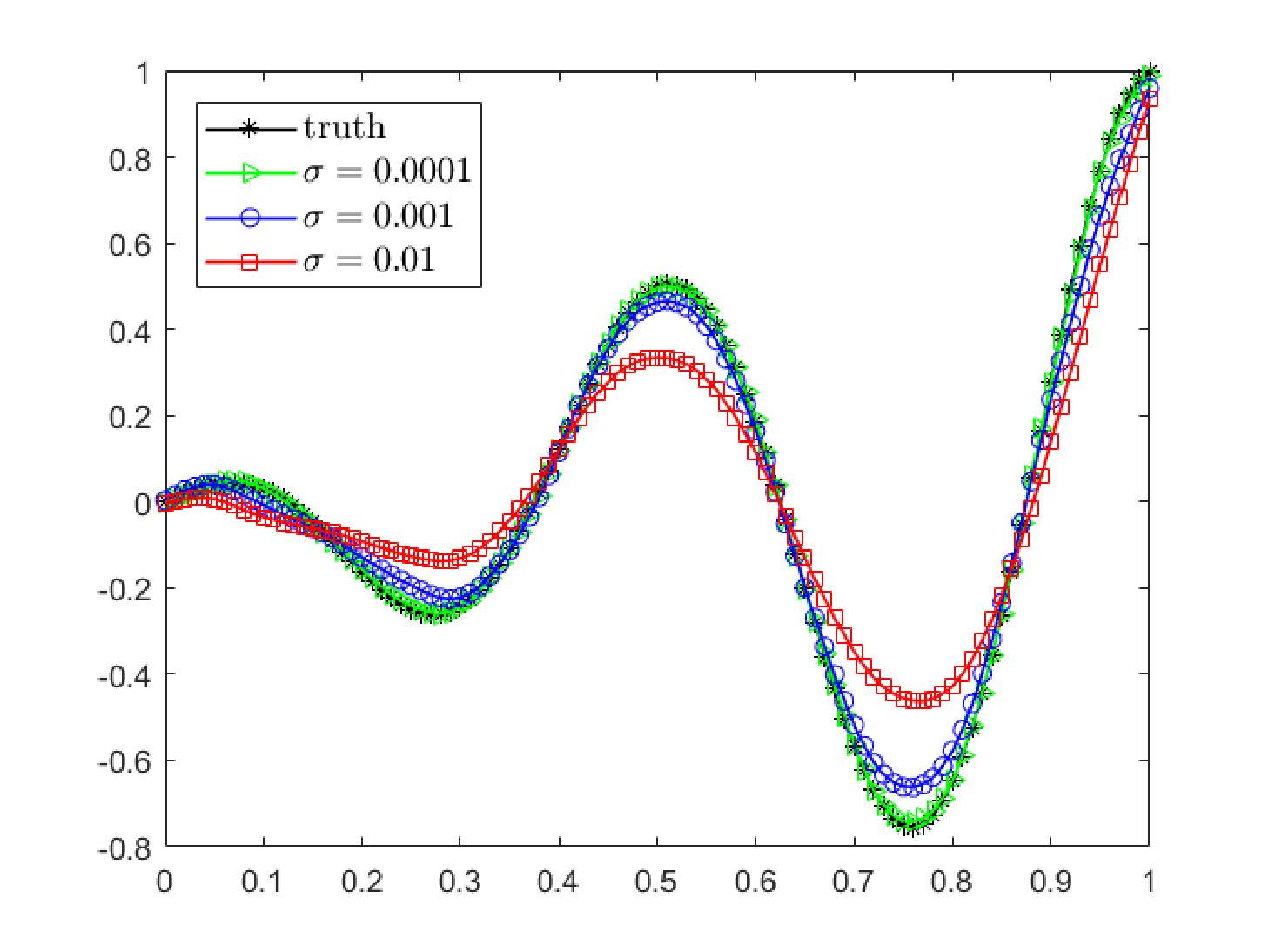}
\subcaption{Reconstruction of $\rho_4(t)$}\label{fig77d}
\end{minipage}
\caption{Reconstruction results for Case 2 ($K=4$) under noise levels $\sigma=0.0001$, $0.001$, and $0.01$. From left to right, top to bottom: $\rho_1(t)$, $\rho_2(t)$, $\rho_3(t)$ and $\rho_4(t)$}\label{fig77}
\end{figure}

It is observed that the algorithm maintains good performance as the number of components increases. Accurate and stable reconstructions are obtained for systems with $K=3$ and $K=4$, indicating good scalability of the proposed approach.


\subsection{Summary of numerical findings}\label{sub5.5}

From the results presented in Subsections \ref{sub5.1}--\ref{sub5.4}, several important conclusions can be drawn regarding the performance and reliability of the proposed IREKM algorithm.

First, it turns out that the non-degeneracy condition $\det\bm G(x_0)\ne0$ plays a crucial role in ensuring the full recovery of the temporal source components. When this condition is violated, the system loses full identifiability, and at least one temporal component becomes unrecoverable, resulting in partial reconstruction and potential instability. In contrast, when $\det \bm G(x_0) \ne 0$, all temporal components are simultaneously identifiable and can be accurately reconstructed.

Even when the identifiability condition holds, the choice of measurement data remains decisive. Partial measurements may lead to incomplete recovery of the temporal components, whereas full observation data significantly improves stability and enables reliable simultaneous reconstruction of all unknown sources.

The numerical experiments also validate Theorem~\ref{thm-ISP}. In particular, when the structural constraint $J^{\al_k}\rho_k=\mu$ is satisfied, the inverse problem becomes uniquely solvable from a single measurement. This compatibility condition restores full identifiability even in reduced observation settings, demonstrating that appropriate structural constraints can compensate for limited data.

Furthermore, the proposed IREKM algorithm successfully reconstructs a wide variety of temporal profiles, including smooth, non-differentiable and even non-continuous functions. This highlights the method’s flexibility and robustness with respect to the regularity of the source terms.

Across all numerical experiments, the reconstruction quality consistently improves as the noise level decreases. From Figures~\ref{fig8a}--\ref{fig8c}, we observe that the approximation errors decrease as the noise diminishes, confirming the improved accuracy of the reconstruction. Figure~\ref{fig8d} further illustrates the stability and convergence behavior of the IREKM algorithm. The residual norms decrease monotonically, and the stopping criterion defined in equation~\eqref{stopped} proves to be efficient and well-suited to the problem. Collectively, these results demonstrate that IREKM is both effective and robust for the simultaneous recovery of multiple temporal source components in a time-fractional coupled system, even under practical noisy conditions.


\section{Concluding Remarks}\label{sec-conclude}

We close this manuscript by reviewing the main results and discussing some future topics. From the theoretical aspect, we deal with both inverse $t$-source problem and the strict positivity issue for coupled subdiffusion systems, which were not investigated in existing literature. The 3 main theorems in this article generalize similar results in the case of single equations, which basically follows the same strategy as those in \cite{SY11,LRY16} but technically more challenging. In the proof of Theorem \ref{thm-Lip}, we refine the estimate of the solution with respect to the source term, which improves that in \cite{LHL23} and clarifies the structure of the smoothing effect. Moreover, we derive a closed representation \eqref{eq-series} for the mild solution taking a series form, which may provide great convenience in future studies of coupled systems.

Next, Theorem \ref{thm-SPP} claims strict positivity of the solution to a homogeneous problem in the sense of performing some Riemann-Liouville integral, under reasonable positivity conditions \eqref{eq-cond-C}--\eqref{eq-cond-rq} on the coupling matrix $\bm C$ and the initial value $\bm g$. Especially, in Corollary \ref{coro-SPP1} we confirm the real strict positivity provided that all components in $\bm g$ are non-negative and non-vanishing. On the other hand, obviously there is still room to improve Theorem \ref{thm-SPP} by removing the Riemann-Liouville integral, which relies completely on its scalar-valued counterpart, i.e., showing $u>0$ a.e. in $\Om\times(0,T)$ in Corollary \ref{coro-SPP0}.

Analogously, Theorem \ref{thm-ISP} also deserves further improvement due to the strong constraint \eqref{eq-cond-mu}, although it already improves Theorem \ref{thm-Lip} greatly. Indeed, at the moment we only succeed in uniquely determining a common factor $\mu$ behind $\bm\rho$ by observing a single component. The essential difficulty is rooted in the fact that for coupled evolution systems, one should pay special attention to the commutativity among matrices and operators, which prevents us from simple application of Duhamel's principle (Lemma \ref{lem-Duhamel2}) like the scalar-valued case. We shall explore the possibility of the simultaneous identification of essentially independent components in $\bm\rho$ by observing partial components of $\bm u(\bm x_0,\,\cdot\,)$ by exploiting the coupling effect of the system.


\paragraph{Acknowledgments}
The second author is supported by JSPS KAKENHI Grant Numbers JP23KK0049 and JP26K06926, Guangdong Basic and Applied Basic Research Foundation (No.\! 2025A1515012248) and FY2025 MUSUBIME of Kyoto University.


\section*{Declarations}

\paragraph{Conflict of interest}
The authors declare that there are no conflicts of interest.


\end{document}